\documentclass[11pt]{article}
\usepackage[a4paper,margin=2cm]{geometry}
\usepackage[english]{babel}
\usepackage{latexsym,amsmath,enumerate,graphics,enumerate,amsthm,tikz,hyperref,float}
\usepackage[mathscr]{euscript}
\usepackage[affil-it]{authblk}
\usepackage{enumerate,amsthm,dsfont,pstricks}
\usepackage{latexsym,amsmath,amssymb,amscd,wrapfig,graphicx}
\usepackage{hyperref,cleveref}

\newtheorem{theorem}{Theorem}[section]
\newtheorem{lemma}[theorem]{Lemma}
\newtheorem{proposition}[theorem]{Proposition}
\newtheorem{corollary}[theorem]{Corollary}

\theoremstyle{definition}

\newtheorem{example}[theorem]{Example}

\theoremstyle{remark}
\newtheorem{remark}[theorem]{Remark}

\newtheorem*{theorem*}{{\bf Theorem}}

\newtheorem*{assumption*}{{\bf Assumption}}

\let\phi=\varphi
\def\N{\mathbb{N}}

\newcommand{\comment}[1]{}

\numberwithin{equation}{section}

\textheight = 240mm
\textwidth = 176mm

\let\epsilon=\varepsilon

%-----------------BEGINDOCUMENT--------------------------------------

\makeatletter
\def\@maketitle{%
  \newpage
  \null
  \vskip 2em%
  \begin{center}%
  \let \footnote \thanks
    {\Large\bfseries \@title \par}%
    \vskip 1.5em%
    {\normalsize
      \lineskip .5em%
      \begin{tabular}[t]{c}%
        \@author
      \end{tabular}\par}%
    \vskip 1em%
    {\normalsize \@date}%
  \end{center}%
  \par
  \vskip 1.5em}
\makeatother

\begin{document}

\title{\sc \huge Prime ideals and Noetherian properties in vector lattices}

\author{Marko Kandi\'{c}%
\thanks{Email: \texttt{marko.kandic@fmf.uni-lj.si}}}
\affil{Faculty of Mathematics and Physics, University of Ljubljana, Jadranska 19, SI-1000 Ljubljana, Slovenia}
\affil{Institute of Mathematics, Physics and Mechanics, Jadranska 19, SI-1000 Ljubljana, Slovenia}

\author{Mark Roelands%
\thanks{Email: \texttt{m.roelands@math.leidenuniv.nl}}}
\affil{Mathematical Institute, Leiden University, 2300 RA Leiden,
The Netherlands}

\maketitle
\date{}
%\date{\today}

\begin{abstract}
In this paper we study the set of prime ideals in vector lattices and how the properties of the prime ideals structure the vector lattice in question. The different properties that will be considered are firstly, that all or none of the prime ideals are order dense, secondly, that there are only finitely many prime ideals, thirdly, that every prime ideal is principal, and lastly, that every ascending chain of prime ideals is stationary (a property that we refer to as prime Noetherian). We also completely characterize the prime ideals in vector lattices of piecewise polynomials, which turns out to be an interesting class of vector lattices for studying principal prime ideals and ascending chains of prime ideals.
\end{abstract}

{\small {\bf Keywords:} } Vector lattices, prime ideals, principal prime ideals, ascending chains of prime ideals.

{\small {\bf Subject Classification:} } Primary 46A40; Secondary 46B40.

\section{Introduction}
Prime ideals play an important role in the study of vector lattices. The prime ideals are precisely the ideals for which the quotient vector lattices are linearly ordered, and every vector lattice contains an abundance of prime ideals. Indeed, for any element $f$ in a vector lattice, the collection of ideals that do not contain $f$ has a maximal element with respect to set inclusion by Zorn's lemma, and this maximal element is a prime ideal. Furthermore, any ideal containing a prime ideal is again a prime ideal, and the set of prime ideals containing a fixed prime ideal is linearly ordered.  The set of prime ideals in a vector lattice, and certain subsets herein, also yield various representations of  vector lattices. For instance, Yosida proved in \cite{Yosida} that an Archimedean vector lattice is isomorphic to the vector lattice of extended functions on the set of prime ideals that avoid a maximal disjoint set in the underlying vector lattice. Johnson and Kist proved a more general result in \cite{Johnson-Kist} which states that every Archimedean vector lattice is isomorphic to the extended functions on a subset of prime ideals that have zero intersection and avoid a maximal disjoint set in the underlying vector lattice. Properties of the prime ideals in a vector lattice can also encode the structure of the vector lattice. For example, in $C(K)$, the continuous functions on a compact Hausdorff space $K$, it follows that every prime ideal is maximal if and only if $K$ is finite, and more generally, in a uniformly complete vector lattice every prime ideal is maximal if and only if the vector lattice is isomorphic to $c_{00}(\Omega)$ for some set $\Omega$. See \cite{Zaanen} for details. The goal of this paper is to further study the structure of vector lattices in terms of the properties of its prime ideals. We characterize the vector lattices for which the set of prime ideals is specialized to having the following properties. Firstly, that all or none of the prime ideals are order dense, secondly, that there are only finitely many prime ideals, thirdly, that every prime ideal is principal, and lastly, that ascending chains of prime ideals 
are stationary.    

The connection between order dense prime ideals and atoms in vector lattices is a dichotomous one. Namely, every prime ideal is order dense if and only if the vector lattice is atomless, and that none of the prime ideals are order dense if and only if the vector lattice is of the form $c_{00}(\Omega)$ for some set $\Omega$.

As for vector lattices with only finitely many prime ideals, we prove that these vector lattices must be finite-dimensional.

The specialization to all prime ideals being principal is inspired by Cohen's theorem for commutative rings, which says that a commutative ring is Noetherian if and only if every prime ideal is finitely generated, see \cite[Theorem~2]{Cohen50}. Since in vector lattices an ideal is finitely generated if and only if it is principal, the restriction to principal prime ideals is taken for this reason. Cohen's theorem is connected to another classical result from commutative ring theory, Kaplansky's theorem. Kaplansky proved in \cite[Theorem~12.3]{Kaplansky49} that in a Noetherian commutative ring every ideal is principal if and only if every maximal ideal is principal. Combining the two theorems yields that a commutative ring is a principal ideal ring if and only if every prime ideal is principal, which is referred to as the Cohen-Kaplansky theorem. It turns out that a vector lattice is Noetherian if and only if it is finite-dimensional. Hence, we prove the vector lattice analogue of the Cohen-Kaplansky theorem stating that every prime ideal in an Archimedean vector lattice is principal if and only if it is finite-dimensional, which is further equivalent with all ideals being principal. We prove that in a uniformly complete Archimedean vector lattice there are no non-maximal principal prime ideals, and we use this result to prove a vector lattice analogue of the Cohen-Kaplansky theorem for uniformly complete vector lattices. More precisely, every ideal in a uniformly complete Archimedean vector lattice is principal if and only if every maximal ideal is principal, which in turn is equivalent with the vector lattice being finite-dimensional. 

Since Noetherian vector lattices are finite-dimensional and, as mentioned above, there are naturally occurring chains of prime ideals in vector lattices, studying vector lattices in which every ascending chain of prime ideals $P_1\subseteq P_2\subseteq\dots$ is stationary is more interesting. We propose to call these vector lattices \emph{prime Noetherian}.  The uniformly complete Archimedean prime Noetherian vector lattices are completely characterized, and these characterizations depend on the existence of a strong unit. It turns out that a prime Noetherian vector lattice with a strong unit is finite-dimensional and in general, a prime Noetherian vector lattice is isomorphic to $c_{00}(\Omega)$ for some set $\Omega$.     

The vector lattices of piecewise polynomials are a resourceful class of non-uniformly complete vector lattices when studying principal prime ideals and prime Noetherian properties. In these vector lattices we can explicitly construct prime Noetherian vector lattices with prescribed finite maximal lengths of ascending chains of prime ideals. Furthermore, we can construct chains of non-maximal principal prime ideals of prescribed length in these vector lattices, which further shows the significant role uniform completeness plays when studying principal prime ideals in vector lattices.    

The structure of our paper is as follows. Section 2 is the preliminary section where the basics about vector lattices are covered. In Section 3 we investigate the relation between order dense prime ideals and atoms in vector lattices. The main result of this section is the dichotomous connection between the two. In Section 4 we characterize vector lattices with only finitely many prime ideals. Vector lattices in which every prime ideal is principal are studied in Section 5 and the main results are \Cref{T: Cohen}, the vector lattice analogue of the Cohen-Kaplansky theorem for commutative rings, and \Cref{Main Cohen}, the uniformly complete vector lattice analogue of the Cohen-Kaplansky theorem for commutative rings. Prime Noetherian vector lattices are studied in Section 6. The main result of this section characterizes the uniformly complete Archimedean vector lattices with this property in \Cref{T:prime Noetherian uniformly complete}. In Section 7 we study vector lattices of piecewise polynomials.

\section{Preliminaries}

Let $E$ be a vector lattice. We say that $E$ is \emph{Archimedean} whenever it follows from $0\leq nx\leq y$ for each $n\in \mathbb N$ that $x=0$. 
A vector subspace $I$ of $E$ is called an \emph{order ideal} or just an \emph{ideal} whenever $0\leq |x|\leq |y|$ and $y\in I$ imply $x\in I$. Given a non-empty subset $A$ consisting of positive vectors in $E$, there is the smallest ideal $I_S$ in $E$ containing the set $S$. It is a standard fact that 
$$I_S=\left\{y\in E\colon 0\leq |y|\leq \sum_{k=1}^n \lambda_k x_k \textrm{ for some } n\in\mathbb N,\ \lambda_1,\ldots,\lambda_n\geq 0, \textrm{ and } x_1,\ldots,x_n\in S\right\}.$$
The ideal $I_S$ is called the \emph{ideal generated by} $S$. If $S=\{x\}$, then $I_S$ is said to be \emph{principal} and we write $I_x$ instead of $I_{\{x\}}$. It should be clear that the ideal $I_S$ is principal if and only if $S$ is finite.  If $I_x=E$ for some positive vector $x$, then $x$ is called a \emph{strong unit}. 

A proper ideal $M$ of $E$ is called \emph{maximal} whenever for each ideal $J$ satisfying $M\subseteq J\subseteq E$ it follows that either $J=M$ or $J=E$. If the co-dimension of an ideal $I$ in $E$ is one, then $I$ is a maximal ideal and conversely, all maximal ideals have co-dimension one by \cite[Corollary
~p.66]{Schaefer}. If a vector lattice contains a strong unit, then every proper ideal is contained in a maximal ideal (see \cite[Theorem 27.4]{Zaanen}). If $E$ does not contain a strong unit, then it may happen that there are no maximal ideals in $E$ (see \cite[Example 27.8]{Zaanen}). A vector lattice contains a strong unit whenever it contains a principal maximal ideal. In fact, we prove a more general statement below.  

\begin{lemma}\label{l: strong units wrt principal quotients}
Let $I$ be a principal ideal in a vector lattice $E$ such that the quotient vector lattice has a strong unit. Then $E$ has a strong unit.  
\end{lemma}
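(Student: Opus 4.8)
The plan is to exhibit an explicit strong unit of $E$, built from a generator of $I$ together with a lift of a strong unit of the quotient. Write $I = I_x$ for some $x \in E_+$, let $q\colon E \to E/I$ denote the canonical quotient map, which is a surjective lattice homomorphism, and pick a strong unit $v$ of $E/I$. Since strong units are positive and $q$ is onto, we may choose the preimage to be positive: if $w \in E$ satisfies $q(w) = v$, then $q(w^+) = q(w)^+ = v^+ = v$, so we may write $v = q(u)$ with $u \in E_+$. I claim that $e := x + u$ is a strong unit of $E$.

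To verify the claim, fix $f \in E$; replacing $f$ by $|f|$ we may assume $f \geq 0$, and it suffices to produce $\lambda \geq 0$ with $f \leq \lambda e$, i.e. $f \in I_e$. Because $v = q(u)$ is a strong unit of $E/I$, there is $n \in \N$ with $q(f) \leq n\,q(u) = q(nu)$, hence $q\big((f - nu)^+\big) = \big(q(f) - q(nu)\big)^+ = 0$, so $(f - nu)^+ \in I = I_x$ and therefore $(f - nu)^+ \leq m x$ for some $m \in \N$. Using $f - nu \leq (f - nu)^+$ together with $x, u \geq 0$ we obtain
$$ f \leq (f - nu)^+ + nu \leq m x + n u \leq k(x + u) = k e, $$
where $k = \max\{m, n\}$. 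Thus $f \in I_e$, so $I_e = E$ and $e$ is a strong unit of $E$.

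There is no serious obstacle in this argument; the only points that require care are guessing the right candidate $e = x + u$ and the observation that a strong unit of the quotient can be lifted to a positive element of $E$, which uses that $q$ is a lattice homomorphism. Note that the hypothesis that $I$ is principal (equivalently, finitely generated) enters only through the single inequality $(f - nu)^+ \leq m x$, and that the Archimedean property is not needed anywhere.
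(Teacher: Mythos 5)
Your proof is correct and follows essentially the same route as the paper: both take a generator $x$ of $I$ and a positive lift $u$ of a strong unit of $E/I$, and verify that $x+u$ dominates everything, the only cosmetic difference being that you extract the element of $I$ via $q\bigl((f-nu)^+\bigr)=0$ while the paper invokes the definition of the quotient order directly.
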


\begin{proof}
 Suppose that there exists a positive vector $x\in E$ such that $I=I_x$ and let $y+I$ be a strong unit for $E/I$ for some positive vector $y\in E$. Pick a vector $u\in E$. Then there exists a $\lambda>0$ such that 
 $|u|+I\leq \lambda y+I$. From the definition of the order on a quotient vector lattice, it follows that there exists a positive vector $w\in I_x$ such that 
 $|u|\leq \lambda y+w$. Hence, there exists a $\mu>0$ such that $|u|\leq \lambda y+\mu x$ which proves that $x+y$ is a strong unit for $E$. 
\end{proof}

The set of all maximal ideals is completely characterized in the vector lattice $C(K)$ of continuous functions on a compact Hausdorff space $K$. It is well-known that an ideal $I$ in $C(K)$ is maximal if and only if $I$ is of the form 
$$M_x:=\{f\in C(K)\colon f(x)=0\}$$ for some $x\in K$. If $E$ is a sublattice of $C(K)$, then for $x\in K$ we denote  the ideal $\left\{f\in E\colon f(x)=0\right\}$ by $M_x^E$. The norm convergence in the Banach space $C(K)$ is sometimes referred to as uniform convergence. This notion can be generalized to vector lattices as follows. We say that a net $(x_\alpha)_{\alpha}$ in a vector lattice $E$ converges \emph{relatively uniformly} to some $x\in E$ if there exists a positive vector $y$, the regulator, such that for each $\epsilon>0$ there exists an index $\alpha_\epsilon$ such that for all $\alpha\geq \alpha_\epsilon$ we have 
$|x_\alpha-x|\leq \epsilon y$. In general, a sequence can converge relatively uniformly to more than one vector. If every relatively uniform Cauchy net converges relatively uniformly in $E$ with respect to the same regulator, then $E$ is called \emph{uniformly complete}. If $E$ Archimedean, then the relative uniform limit is uniquely determined.  In Archimedean vector lattices the general notion of relative uniform convergence is not far away from the classical one for $C(K)$ spaces. Given a positive vector $x$ in an Archimedean vector lattice $E$, the principal ideal $I_x$ equipped with
$$\|y\|_x:=\inf\{\lambda>0:\; |y|\leq \lambda x\}$$
is a normed space. The norm completion of $(I_x,\|\cdot\|_x)$ is an AM-space with a strong unit $x$ which by the Kakutani representation theorem (see \cite[Theorem 4.29]{AliprantisBurkinshaw}) is lattice isometric to the Banach lattice $C(K)$ for some compact Hausdorff space $K$ where the strong unit $x$ is mapped to the constant one function. Hence, if the normed lattice $(I_x,\|\cdot\|_x)$ is norm complete, then $I_x$ itself is already lattice isometric to $C(K)$.  It is quite easy to see that $E$ is relatively uniformly complete if and only if $(I_x,\|\cdot\|_x)$ is a Banach lattice for each positive vector $x\in E$. 

An ideal $P\subsetneq E$ is said to be \emph{prime} whenever $x\wedge y\in P$ implies $x\in P$ or $y\in P$.  By \cite[Theorem~33.3]{Zaanen} every maximal ideal is prime. For the proof of the following useful characterization of prime ideals we refer the reader to \cite[Theorem 33.2]{Zaanen}. 

\begin{theorem}
For an ideal $P$ in a vector lattice $E$ the following conditions are equivalent. 
\begin{itemize}
    \item[$(i)$] $P$ is prime. 
    \item[$(ii)$] If $x\wedge y=0$, then $x\in P$ or $y\in P$. 
    \item[$(iii)$] The quotient vector lattice $E/P$ is linearly ordered. 
    \item[$(iv)$] For any ideals $I$ and $J$ satisfying $I\cap J\subseteq P$ we have $I\subseteq P$ or $J\subseteq P$. 
\end{itemize}
\end{theorem}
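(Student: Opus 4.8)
The plan is to run the cycle of implications $(i)\Rightarrow(ii)\Rightarrow(iii)\Rightarrow(iv)\Rightarrow(i)$, which is more economical than checking all pairs directly, using throughout that $E/P$ is again a vector lattice and that the quotient map $q\colon x\mapsto x+P$ is a lattice homomorphism. The implication $(i)\Rightarrow(ii)$ is immediate: if $x\wedge y=0$, then $x\wedge y\in P$, so primeness of $P$ yields $x\in P$ or $y\in P$. For $(ii)\Rightarrow(iii)$ I would show that any two cosets $u+P$ and $v+P$ are comparable. Setting $w=u-v$ and using the identity $w^{+}\wedge w^{-}=0$, hypothesis $(ii)$ forces $w^{+}\in P$ or $w^{-}\in P$; in the first case $w+P=-w^{-}+P\leq 0+P$, so $u+P\leq v+P$, and in the second case $w+P=w^{+}+P\geq 0+P$, so $v+P\leq u+P$. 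Hence $E/P$ is linearly ordered.

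For $(iii)\Rightarrow(iv)$ I would argue by contraposition: assume $I\cap J\subseteq P$ and $I\not\subseteq P$, and prove $J\subseteq P$. Fix $x\in I\setminus P$; since $P$ is an ideal, $|x|\in I\setminus P$ as well. For an arbitrary $y\in J$, the element $|x|\wedge|y|$ satisfies $0\leq|x|\wedge|y|\leq|x|\in I$ and $0\leq|x|\wedge|y|\leq|y|\in J$, so it lies in $I\cap J\subseteq P$. Applying $q$ gives $(|x|+P)\wedge(|y|+P)=0+P$, and since $E/P$ is linearly ordered this meet equals the smaller of the two cosets. As $|x|+P>0+P$, we conclude $|y|+P=0+P$, that is $y\in P$. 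Therefore $J\subseteq P$.

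For $(iv)\Rightarrow(i)$, suppose $x\wedge y\in P$. Put $a=x-x\wedge y\geq 0$ and $b=y-x\wedge y\geq 0$; translation invariance of the lattice operations gives $a\wedge b=(x\wedge y)-(x\wedge y)=0$, and since $x\wedge y\in P$ we have $x\in P\iff a\in P$ and $y\in P\iff b\in P$. It therefore suffices to show $a\in P$ or $b\in P$ whenever $a,b\geq 0$ and $a\wedge b=0$. For this I claim $I_{a}\cap I_{b}=\{0\}$: if $|z|\leq\lambda a$ and $|z|\leq\mu b$ with $\lambda,\mu\geq 0$, then, writing $c=\lambda\vee\mu$, we get $|z|\leq(ca)\wedge(cb)=c(a\wedge b)=0$, hence $z=0$. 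Since $\{0\}\subseteq P$, applying $(iv)$ to $I_{a}$ and $I_{b}$ yields $I_{a}\subseteq P$ or $I_{b}\subseteq P$, so $a\in P$ or $b\in P$, which closes the cycle.

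The step I expect to require the most care is $(iv)\Rightarrow(i)$: one must first replace the given pair $x,y$ by a \emph{disjoint} positive pair $a,b$ without losing any information about membership in $P$, and then notice that disjointness is preserved under multiplication by positive scalars, which is precisely what forces the principal ideals $I_{a}$ and $I_{b}$ to meet only in $0$. The remaining care, in $(ii)\Rightarrow(iii)$ and $(iii)\Rightarrow(iv)$, is merely to use that $q$ is a lattice homomorphism so that meets transport faithfully between $E$ and the linearly ordered quotient $E/P$, together with the elementary fact that in a linearly ordered vector lattice a meet coincides with the minimum.
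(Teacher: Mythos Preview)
Your proof is correct. Each implication in the cycle $(i)\Rightarrow(ii)\Rightarrow(iii)\Rightarrow(iv)\Rightarrow(i)$ is carried out cleanly: the use of $w^{+}\wedge w^{-}=0$ for $(ii)\Rightarrow(iii)$, the observation that in a linearly ordered quotient a zero meet forces one factor to vanish for $(iii)\Rightarrow(iv)$, and the reduction to disjoint positive elements followed by $I_a\cap I_b=\{0\}$ for $(iv)\Rightarrow(i)$ are all standard and valid.

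Note, however, that the paper does \emph{not} give its own proof of this theorem: it simply states the result and refers the reader to \cite[Theorem~33.2]{Zaanen}. So there is no in-paper argument to compare against. Your cyclic proof is essentially the classical one (and matches the argument one finds in Luxemburg--Zaanen), so there is nothing further to flag.
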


If $P$ is a prime ideal, then by \cite[Theorem 33.3]{Zaanen} every ideal containing $P$ is also prime, and furthermore, the family of all ideals containing $P$ is linearly ordered. 

A non-zero positive vector $a\in E$ is said to be an \emph{atom} whenever it follows from $0\leq x\leq a$, $0\leq y\leq a$  and $x\wedge y=0$ that $x=0$ or $y=0$. If the principal ideal $I_a$ generated by $a$ equals the linear span of $a$, then $a$ is a \emph{discrete element}. While every discrete element is an atom, both classes coincide in Archimedean vector lattices. Examples of atomic vector lattices are $\mathbb R^n$ ordered coordinatewise and the vector lattice $c_{00}(\Omega)$ of all functions with finite support ordered pointwise on a non-empty set $\Omega$. The atoms in $c_{00}(\Omega)$ are precisely the positive multiples of characteristic functions of singleton sets.
In an Archimedean vector lattice  the principal ideal $I_a$ generated by an atom $a$ is a projection band and so $M_a:=\{a\}^d$ is a maximal ideal in $E$.  
The linear span $A_0$ of all atoms in a vector lattice $E$ is always an ideal. The \emph{atomic} part $A$ of $E$ is the band generated by $A_0$. The disjoint complement $C=A^d$ is called the \emph{continuous} or \emph{atomless} part of $E$. If $C=0$ or equivalently $A^{dd}=E$, then $E$ is an \emph{atomic} vector lattice. An ideal $I$ in $E$ with the property $I^d=\{0\}$ is said to be \emph{order dense}. It turns out that $I$ is order dense in $E$ if and only if for each non-zero positive vector $x\in E$ there is a non-zero positive vector $y\in I$ such that $0<y\leq x$. We refer the reader to the standard texts \cite{Zaanen}, \cite{Schaefer}, \cite{AliprantisBurkinshaw} and \cite{AbramovichAliprantis} for any  unexplained terminology about vector lattices.

\section{Order dense prime ideals in vector lattices}\label{section: order dense prime ideals}

It turns out that order dense prime ideals are closely related to the existence of atoms, in fact, their dichotomous connection is proved in \Cref{P: order dense prime ideals}. For example, if an Archimedean vector lattice $E$ contains an atom $a$, then its disjoint complement $M_a=\{a\}^d$ is of co-dimension one, so it is a maximal ideal in $E$. Since $a$ is disjoint with $M_a$, it follows that the prime ideal $M_a$ is not order dense in $E$. The canonical Archimedean vector lattice which does not contain any order dense prime ideals is of the form $c_{00}(\Omega)$ for some set $\Omega$ as stated in \Cref{P: order dense prime ideals}. The equivalent statements in the proposition below follow from a combination of \cite[p.~430]{Zaanen} and \cite[Theorem~61.4]{Zaanen}. 

\begin{proposition}\label{T:maximal and prime ideals in atomic}
Let $\Omega$ be a non-empty set. Then an ideal $I\subseteq c_{00}(\Omega)$ is maximal if and only if $I$ is of the form  
\[
M_w:=\{x\in c_{00}(\Omega)\colon x(w)=0\} 
\]
for some $w\in\Omega$. Moreover, for a proper ideal $I$ in $c_{00}(\Omega)$ the following statements are equivalent.
\begin{itemize}
    \item[$(i)$] $I$ is a minimal prime ideal.
    \item[$(ii)$] $I$ is a prime ideal.
    \item[$(iii)$] $I$ is a maximal ideal.
\end{itemize}
\end{proposition}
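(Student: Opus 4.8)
The plan is to first establish the characterization of maximal ideals in $c_{00}(\Omega)$, and then prove the equivalence of the three conditions in a cycle, with the bulk of the work being the implication $(ii)\Rightarrow(iii)$, that a prime ideal in $c_{00}(\Omega)$ is maximal.

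For the description of maximal ideals, I would argue that each $M_w$ has co-dimension one (the evaluation functional $x\mapsto x(w)$ is a nonzero linear functional whose kernel is $M_w$), hence $M_w$ is maximal by the fact quoted in the preliminaries. For the converse, let $I$ be a maximal ideal. Since $I$ is proper, there is some $w\in\Omega$ with $\chi_{\{w\}}\notin I$ (otherwise $I$ would contain all atoms, hence their linear span, which is all of $c_{00}(\Omega)$). Then $I\subseteq M_w$: if $x\in I$ had $x(w)\neq0$, then since $|x(w)|^{-1}|x|\wedge\chi_{\{w\}}$ lies in $I$ and dominates a positive multiple of $\chi_{\{w\}}$ — more carefully, $\chi_{\{w\}}\leq \lambda|x|$ for suitable $\lambda>0$ once we restrict to the coordinate $w$; actually $\chi_{\{w\}} = |x(w)|^{-1}\,(|x|\wedge |x(w)|\chi_{\{w\}})$ is dominated by $|x(w)|^{-1}|x|\in I$, so $\chi_{\{w\}}\in I$, a contradiction. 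Hence $I\subseteq M_w\subsetneq c_{00}(\Omega)$, and maximality forces $I=M_w$.

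For the equivalences, the cheap implications are $(i)\Rightarrow(ii)$ (minimal prime ideals are in particular prime) and $(iii)\Rightarrow(i)$: a maximal ideal is prime by \cite[Theorem~33.3]{Zaanen}, and it is minimal among prime ideals because any prime ideal contained in it, say $P\subseteq M_w$, would by $(ii)\Rightarrow(iii)$ itself equal some $M_{w'}$, and $M_{w'}\subseteq M_w$ forces $w'=w$ (if $w'\neq w$ then $\chi_{\{w\}}\in M_{w'}\setminus M_w$). The main step is therefore $(ii)\Rightarrow(iii)$. Given a proper prime ideal $P$, pick $w\in\Omega$ with $\chi_{\{w\}}\notin P$ as above; I claim $P\subseteq M_w$, and then $M_w$ being maximal together with $P$ being contained in a proper ideal and — using that any ideal containing a prime ideal is prime, so $M_w\supseteq P$ gives nothing extra — I need $P=M_w$, not merely $P\subseteq M_w$. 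To get equality: take any $x\in M_w$, so $x(w)=0$. Write $x = x^+ - x^-$ and note $x^+\wedge\chi_{\{w\}} = 0$ and $x^-\wedge\chi_{\{w\}}=0$ since $x^\pm$ vanish at $w$. By primeness (condition $(ii)$ of the prime ideal characterization theorem), $x^+\wedge\chi_{\{w\}}=0$ forces $x^+\in P$ or $\chi_{\{w\}}\in P$; since $\chi_{\{w\}}\notin P$ we get $x^+\in P$, and similarly $x^-\in P$, hence $x\in P$. Combined with $P\subseteq M_w$ this yields $P=M_w$, which is maximal.

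The main obstacle — really the only subtle point — is making sure the direction $(ii)\Rightarrow(iii)$ genuinely produces equality $P=M_w$ rather than only an inclusion; the disjointness trick $x^\pm \wedge \chi_{\{w\}}=0$ for $x$ vanishing at $w$ is exactly what closes that gap, and it relies on condition $(ii)$ of the prime-ideal characterization rather than the definition directly. Everything else is bookkeeping with finitely supported functions and the standard facts about maximal ideals quoted in the preliminaries. I would also remark that one could shortcut part of this by invoking the cited results \cite[p.~430]{Zaanen} and \cite[Theorem~61.4]{Zaanen}, but the self-contained argument above is short enough to include.
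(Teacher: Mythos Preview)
Your argument is correct. For the characterization of maximal ideals you take essentially the same route as the paper: show each $M_w$ has co-dimension one, and for a given maximal ideal locate a point $w$ at which all its members vanish (the paper phrases this as ``if for every $w$ some $f\in I$ has $f(w)>0$ then $I=c_{00}(\Omega)$'', while you work with the atom $\chi_{\{w\}}$ directly; the content is the same).

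For the equivalence $(i)\Leftrightarrow(ii)\Leftrightarrow(iii)$ the paper does not argue at all but simply invokes \cite[p.~430]{Zaanen} and \cite[Theorem~61.4]{Zaanen}. Your self-contained proof via the disjointness trick $x^{\pm}\wedge\chi_{\{w\}}=0$ is a genuine addition and is the cleanest direct way to see $(ii)\Rightarrow(iii)$. One small streamlining: once you have shown $M_w\subseteq P$, the separately claimed inclusion $P\subseteq M_w$ is not needed, since $P$ is proper and $M_w$ is maximal; alternatively, as you note with ``as above'', that inclusion follows from your earlier argument, which used only that $I$ is an ideal, not its maximality.
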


\begin{proof}
For $w\in\Omega$ the ideal $M_w$ is maximal since it has co-dimension one. If $I$ is maximal in $c_{00}(\Omega)$, then it follows that $I\subseteq M_w$ for some $w\in\Omega$. Indeed, if for every $w\in\Omega$ there is a function $f\in I$ such that $f(w)>0$, this would imply $I=c_{00}(\Omega)$. Hence, there must exist $w\in\Omega$ such that $f(w)=0$ for all $f\in I$. Since $I$ is a maximal ideal, we conclude that $I=M_w$. 
\end{proof}

The dichotomous connection between the existence of order dense prime ideals and the existence of atoms is made precise in the following theorem.

\begin{theorem}\label{P: order dense prime ideals}
Let $E$ be an Archimedean vector lattice. 
\begin{enumerate}
\item[$(i)$] $E$ is atomless if and only if every prime ideal in $E$ is order dense in $E$.   
\item[$(ii)$] None of the prime ideals are order dense in $E$ if and only if $E$ is atomic and $E=A_0$.
\end{enumerate}
\end{theorem}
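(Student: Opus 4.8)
The plan is to connect the disjoint complement $P^{d}$ of a prime ideal $P$ with the atoms of $E$, proving $(i)$ first and invoking it in $(ii)$. For $(i)$: if $E$ is atomless and $P$ is a prime ideal that fails to be order dense, choose $0<x\in P^{d}$; since $E$ has no atoms, $x$ is not an atom, so there are nonzero $u,v$ with $0\leq u,v\leq x$ and $u\wedge v=0$. As $P$ is prime, say $u\in P$; but $0<u\leq x\in P^{d}$ forces $u\in P^{d}$ too, hence $u\wedge u=0$, i.e.\ $u=0$, a contradiction. For the converse, argue by contraposition: if $E$ has an atom $a$, then $M_{a}=\{a\}^{d}$ is maximal, hence prime, and $0\neq a\in\{a\}^{dd}=M_{a}^{d}$, so $M_{a}$ is a prime ideal that is not order dense --- exactly the observation made just before the statement.

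For the easy implication of $(ii)$, assume $E$ is atomic with $E=A_{0}$ and let $P$ be an arbitrary prime ideal. Since $P$ is proper while $A_{0}$ is the linear span of the atoms, some atom $a$ is not in $P$. For $p\in P_{+}$ we have $p\wedge a\in P$ and $0\leq p\wedge a\leq a$; as $a$ is discrete ($E$ being Archimedean), $p\wedge a=\lambda a$ for some $\lambda\geq0$, and $\lambda>0$ would put $a$ in $P$. Hence $p\wedge a=0$ for every $p\in P_{+}$, so $0\neq a\in P^{d}$ and $P$ is not order dense.

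The converse of $(ii)$ is the heart of the matter; I would prove it by contraposition, producing an order dense prime ideal from the assumption $E\neq A_{0}$. The key reduction is that it suffices to exhibit a \emph{proper} order dense ideal $I\subsetneq E$: picking $x_{0}\in E_{+}\setminus I$ and applying the fact recalled in the introduction to the quotient $E/I$ and the nonzero element $x_{0}+I$ gives a prime ideal of $E/I$ avoiding $x_{0}+I$, whose preimage $P$ in $E$ is then a prime ideal with $I\subseteq P$, $x_{0}\notin P$, and $P^{d}\subseteq I^{d}=\{0\}$; thus $P$ is a proper order dense prime ideal.

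It remains to build such an $I$, which I would do by splitting on the atomless part $C=A^{d}$. If $C=\{0\}$, then (since $A=A_{0}^{dd}$) $A_{0}^{d}=A^{d}=\{0\}$, so $A_{0}$ is order dense, and it is proper precisely because $E\neq A_{0}$. If $C\neq\{0\}$, then $C$ is a nonzero atomless Archimedean vector lattice --- an atom of $C$ would be an atom of $E$ lying in $A\cap C=\{0\}$ --- so it has a prime ideal $Q$, which by the already-established part $(i)$ has zero disjoint complement in $C$ and is proper. Since $C$ is a band, $Q$ is also an ideal of $E$, and because $A_{0}\subseteq A$ and $Q\subseteq C$ are disjoint, $I:=A_{0}\oplus Q$ is an ideal of $E$; moreover $I^{d}=A_{0}^{d}\cap Q^{d}=C\cap Q^{d}=\{0\}$ (the last equality is the order density of $Q$ in $C$), so $I$ is order dense. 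Finally $I$ is proper: if $A_{0}\oplus Q=E$, pick $0<c\in C\setminus Q$ and write $c=a+q$ with $a\in A_{0}$, $q\in Q$; then $a=c-q\in A\cap C=\{0\}$, so $c=q\in Q$, a contradiction. The step I expect to be the real obstacle is precisely this converse of $(ii)$: the desired order dense prime ideal need not be visible explicitly (in $\ell^{\infty}$ it is induced by a free ultrafilter), so one should retreat to the constructible target of a proper order dense ideal and then lift it, and the construction $A_{0}\oplus Q$ together with the check that it is a proper ideal is where the atomic/atomless band decomposition genuinely enters.
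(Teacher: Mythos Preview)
Your proof is correct. For $(i)$ and for the easy implication of $(ii)$ your arguments are essentially the paper's (you give a direct verification that an atom outside $P$ lies in $P^{d}$, whereas the paper quotes \Cref{T:maximal and prime ideals in atomic}).

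For the hard implication of $(ii)$ your route is genuinely different. The paper first proves, as an auxiliary step, that the hypothesis forces $E$ to have the projection property: for any band $B$ the order dense ideal $B\oplus B^{d}$ must equal $E$, since otherwise a prime ideal above it (via \cite[Theorem~33.4]{Zaanen}) would be order dense. Using the resulting decomposition $E=A\oplus C$, the paper then takes a prime $P$ in the atomless band $C$ and checks \emph{directly} that $A\oplus P$ is prime in $E$ --- the decomposition is what allows one to split a relation $x\wedge y=0$ into its $A$- and $C$-components --- thus producing an order dense prime and concluding $C=\{0\}$; a final lifting of $A_{0}$ handles the case $A_{0}\neq E$. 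You instead make a single clean reduction: any \emph{proper} order dense ideal can be enlarged to a proper prime ideal which is automatically still order dense, so it suffices to manufacture such an ideal. You then produce $A_{0}$ when $C=\{0\}$ and $A_{0}\oplus Q$ (with $Q$ prime in $C$) when $C\neq\{0\}$, never needing to verify primeness by hand and hence never needing the projection property. Your approach is more economical for the stated theorem; the paper's detour has the side benefit of exhibiting the projection property as an independent consequence of the hypothesis.
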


\begin{proof}
$(i)$ Suppose that $E$ contains a prime ideal $P$ that is not order dense. Then $P^d\neq \{0\}$. 
If $\dim P^d\geq 2$, then as $E$ is Archimedean, there must exist two vectors $x_1$ and $x_2$ in $P^d$ that are incomparable. It follows that $x:=x_1-x_1\wedge x_2$ and $y:=x_2-x_1\wedge x_2$ are now two non-zero disjoint vectors in $P^d$. Since neither $x$ nor $y$ is in $P$, this contradicts the fact that it is a prime ideal. Hence $\dim P^d=1$, so that $P^{dd}$ is necessarily a maximal band in $E$. By \cite[Theorem 26.7]{Zaanen} we conclude that there exists an atom $a\in E$ such that $P^{d}=\{a\}^d$. 

To prove the converse implication, assume that $E$ contains an atom. Then $M_a$ is a maximal ideal in $E$. Since there is no positive vector $x\in M_a$ such that $0<x\leq a$, the prime ideal $M_a$ is not order dense in $E$. 

$(ii)$ Suppose that $E$ does not contain order dense prime ideals. We first show that $E$ has the projection property. To see this, pick a band $B$ in $E$ and consider the sum $J:=B\oplus B^d$. Then $J$ is an order dense ideal in $E$. If $J\neq E$, then there exists a vector $x\in E\setminus J$, so that by \cite[Theorem 33.4]{Zaanen} there exists a proper prime ideal $P$ containing $J$ which does not contain $x$. Since $P$ is necessarily order dense, this contradicts our assumption. It follows that $E=B\oplus B^d$ and so $B$ is a projection band. 

To prove that $E$ is atomic, note that by $(i)$ the vector lattice $E$ contains an atom.  We need to prove that $E=A$ or equivalently $C=\{0\}$.
Suppose that $C\neq \{0\}$. Then $C$ is a non-trivial atomless band in $E$. By $(i)$, there is an order dense prime ideal $P$ in $C$. The set $A\oplus P$ is clearly an ideal in $E$. Since 
$(A\oplus P)^d=A^d\cap P^d=C\cap P^d$ and since $P$ is order dense in $C$, we conclude that $A\oplus P$ is order dense in $E$. 

We claim that $A\oplus P$ is a prime ideal in $E$. To see this, pick vectors $x$ and $y\in E$ with $x\wedge y=0$. 
Since $E=A\oplus C$, $x$ and $y$ can be decomposed as $x=x_1+x_2$ and $y=y_1+y_2$ where $x_1,y_1\in A$ and $x_2,y_2\in C$. From
$$x\wedge y=x_1\wedge y_1 + x_2\wedge y_2=0$$ 
we conclude that $x_2\wedge y_2=0$. Since $P$ is a prime ideal in $C$, we conclude that $x_2\in P$ or $y_2\in P$. This proves that $x\in A\oplus P$ or $y\in A\oplus P$, and so $A\oplus P$ is an order dense prime ideal in $E$ which is impossible. Hence $C=\{0\}$, which proves that $E$ is atomic. If $A_0\neq E$, then there exists a proper prime ideal $P$ in $E$ which contains $A_0$. Since $E$ is atomic, it follows that $A_0$, and therefore also $P$, is order dense in $E$ which is impossible by assumption, hence $E=A_0$. 

To conclude the proof note that by \Cref{T:maximal and prime ideals in atomic} the class of prime ideals coincides with the class of maximal ideals in $A_0$ and that none of the maximal ideals in $A_0$ are  order dense. 
\end{proof}

\begin{remark}
Note that statement $(i)$ in \Cref{P: order dense prime ideals} can be reformulated so that it characterizes when non-order dense prime ideals exist in $E$. That is, an Archimedean vector lattice $E$ contains a prime ideal that is not order dense if and only if $E$ contains an atom.  
\end{remark}

\section{Vector lattices with finitely many prime ideals}
\label{section: finitely many prime ideals}

In this section we consider vector lattices which have finitely many (minimal) prime ideals and prove in \Cref{T: finitely many minimal primes} that these vector lattices must be finite-dimensional. We start with a general lemma that locates prime ideals and characterizes maximal ideals of norm dense sublattices of $C(K)$-spaces. 

\begin{lemma}\label{L:lemma about maximals}
Let $K$ be a compact Hausdorff space and let  $E$ be a sublattice of $C(K)$. 
\begin{itemize}
    \item [$(i)$] For every proper ideal $J$ in $E$ there exists a point $x\in K$ such that $J$ is contained in $M_x^E$.
    \item[$(ii)$] If $E$ is norm dense, then an ideal $J$ in $E$ is maximal if and only if it is of the form $M_x^E$ for a unique point $x\in K$.
    \item [$(iii)$] If $E$ is norm dense, then for every prime ideal $P$ in $E$ there exists a unique point $x\in K$ such that $P$ is contained in $M_x^E$. 
\end{itemize}
\end{lemma}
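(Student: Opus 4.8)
\emph{Proof proposal.} The plan is to establish $(i)$ by a compactness argument, deduce $(ii)$ from $(i)$ together with a point–separation argument carried out \emph{inside} $E$, and then obtain $(iii)$ from $(ii)$ by invoking that the ideals containing a fixed prime ideal form a chain. For $(i)$, suppose for contradiction that the proper ideal $J$ has no common zero on $K$: for each $x\in K$ pick $f_x\in J$ with $f_x(x)\neq 0$. Then $|f_x|\in J$ and $U_x:=\{y\in K:|f_x|(y)>0\}$ is an open neighbourhood of $x$. By compactness $K=U_{x_1}\cup\cdots\cup U_{x_n}$ for some $x_1,\dots,x_n$, so $g:=|f_{x_1}|+\cdots+|f_{x_n}|\in J$ is continuous and strictly positive, hence $g\geq\delta\mathbf 1$ for some $\delta>0$. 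For any $h\in E$ we get $|h|\leq\|h\|_\infty\mathbf 1\leq(\|h\|_\infty/\delta)\,g$ in $C(K)$, hence in $E$, so $h\in J$ since $J$ is an ideal containing $g$. Thus $J=E$, a contradiction.

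For $(ii)$, first $M_x^E$ is proper: norm density gives $f\in E$ with $\|f-\mathbf 1\|_\infty<1/2$, whence $f(x)>0$. To see $M_x^E$ is maximal, let $J$ be an ideal with $M_x^E\subsetneq J\subseteq E$; pick $f\in J$ with $f(x)\neq 0$ and replace $f$ by $|f|\geq 0$. For arbitrary $0\leq h\in E$ choose $\lambda>0$ with $\lambda f(x)>h(x)$, so by continuity $\lambda f>h$ on a neighbourhood of $x$. Then $h-h\wedge\lambda f\in E$ vanishes near $x$, hence lies in $M_x^E\subseteq J$, while $h\wedge\lambda f\leq\lambda f$ lies in $J$; adding these gives $h\in J$, so $J=E$. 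Conversely, a maximal ideal $J$ satisfies $J\subseteq M_x^E$ for some $x$ by $(i)$, and then $J=M_x^E$ since $M_x^E$ is proper. For uniqueness, assume $x\neq y$; by normality of $K$ take a Urysohn function $\varphi\in C(K)$ with $0\leq\varphi\leq 1$, $\varphi\equiv 0$ on a neighbourhood of $x$ and $\varphi(y)=1$, and by density choose $g,w\in E$ with $\|g-\varphi\|_\infty,\|w-\mathbf 1\|_\infty<1/4$. Then $f:=(|g|-w/3)^+\in E$ vanishes on a neighbourhood of $x$ (where $|g|$ is small while $w$ is bounded below) but has $f(y)>0$, so $f\in M_x^E\setminus M_y^E$ and $M_x^E\neq M_y^E$.

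For $(iii)$, existence of $x$ with $P\subseteq M_x^E$ is immediate from $(i)$. If also $P\subseteq M_y^E$, then $M_x^E$ and $M_y^E$ are comparable because the ideals containing the prime ideal $P$ are linearly ordered; since both are maximal and proper by $(ii)$, they coincide, and the uniqueness in $(ii)$ forces $x=y$.

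\emph{Main obstacle.} The delicate point is the uniqueness in $(ii)$: since $E$ is merely a norm-dense \emph{sublattice} it may contain no constants and need not be an ideal of $C(K)$, so one cannot simply restrict a Urysohn function to $E$. The remedy is to approximate a Urysohn function, pass to its modulus, subtract a small multiple of an approximate unit $w\in E$, and truncate at $0$; the positive part then genuinely vanishes near the designated point while remaining in $E$. Everything else ($M_x^E$ being a proper ideal, the lattice-algebraic manipulations, the chain argument for $(iii)$) is routine.
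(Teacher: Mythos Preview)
Your proof is correct; parts $(i)$ and $(iii)$ coincide with the paper's argument. In part $(ii)$, however, you take a noticeably different route in two places. For maximality of $M_x^E$, the paper applies $(i)$ again to any proper $J\supseteq M_x^E$, obtaining $J\subseteq M_y^E$, and then uses a linear combination $g=f-\frac{f(x)}{h(x)}h$ to force $M_x^E=M_y^E$; you instead argue directly, lattice-theoretically, via the decomposition $h=(h-h\wedge\lambda f)+(h\wedge\lambda f)$. For uniqueness, the paper shows that $M_x^E=M_y^E$ forces the identity $f(x)g(y)=f(y)g(x)$ for all $f,g\in E$, extends this by density to all of $C(K)$, and then invokes Urysohn for a contradiction; you instead build an explicit separating element inside $E$ by approximating a Urysohn function and an approximate unit and forming $(|g|-w/3)^+$. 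Your approach is more constructive and exploits the lattice structure of $E$ more visibly, while the paper's is shorter and purely linear-algebraic, pushing the Urysohn step to $C(K)$ rather than inside $E$. Both are clean; neither dominates the other.
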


\begin{proof}
$(i)$ Suppose that for each $x\in K$ there exists $f_x\in J$ such that $f_x(x)>0$. By continuity, for each $f_x\in J$ one can find an open neighborhood $U_x$ of $x$ such that $f_x$ is strictly positive on $U_x$. Since we have an open covering $\{U_x\colon x\in K\}$ of $K$, there exists a finite subcover $U_{x_1},\ldots,U_{x_n}$. The function $f:=f_{x_1}+\cdots+f_{x_n}$ is strictly positive on $K$, so it is a strong unit in $E$ and therefore, we have $J=E$. This contradiction shows that there exists $x\in K$ such that $J\subseteq M_x^E$. 

$(ii)$ Let $x\in K$ and consider the ideal $M_x^E$ in $E$. It follows that $M_x^E$ is proper, as otherwise $f(x)=0$ for all $f\in C(K)$ since $E$ is norm dense in $C(K)$. Suppose that $J$ is a proper ideal in $E$ containing $M_x^E$. By $(i)$ there is a point $y\in K$ such that $J\subseteq M_y^E$. If there would exists a function $f\in M_y^E\setminus M_x^E$, then for $h\in E\setminus M_y^E$ we can define $g\in E$ by 
\[
g(z):=f(z)-\frac{f(x)}{h(x)}h(z). 
\]
But then $g(x)=0$ and $g(y)\neq 0$, which contradicts the fact that $M_x^E\subseteq M_y^E$. Hence $M_x^E=M_y^E$, and so $J=M_x^E$ showing that $M_x^E$ is a maximal ideal in $E$. 

On the other hand, if $J$ is a maximal ideal in $E$, then there is a point $x\in K$ such that $J\subseteq M_x^E$ by $(i)$ and so $J=M_x^E$. 

For the uniqueness of the point $x\in K$, suppose that $M_x^E=M_y^E$ for $x\neq y$. We claim that for any $f,g\in E$ it follows that $f(x)g(y)=f(y)g(x)$. Indeed, if $f(x)=0$, then $f\in M_x^E=M_y^E$, so $f(y)=0$ and $f(x)g(y)=f(y)g(x)$. If $f(x)\neq 0$, then $h\in E$ defined by 
\[
h(z):=g(z)-\frac{g(x)}{f(x)}f(z) 
\]
satisfies $h(x)=0$, so $h\in M_x^E=M_y^E$ and $h(y)=0$ implies that $f(x)g(y)=f(y)g(x)$, proving the claim. Since $E$ is norm dense in $C(K)$, it follows that $f(x)g(y)=f(y)g(x)$ for all $f,g\in C(K)$, but this contradicts Urysohn's lemma. We conclude that $x=y$ and the maximal ideals in $E$ are therefore uniquely determined by the points in $K$. 

$(iii)$ Let $P$ be any prime ideal in $E$. Then by $(i)$ there is a point $x\in K$ such that $P\subseteq M_x^E$. Suppose that there is another point $y\in K$ such that $P\subseteq M_y^E$. Since $P$ is a prime ideal, it follows that we must have $M_x^E\subseteq M_y^E$ or $M_y^E\subseteq M_x^E$ and so $M_x^E=M_y^E$. Hence, $x=y$ by $(ii)$. 
\end{proof}

Although \Cref{L:lemma about maximals} is stated for sublattices of $C(K)$-spaces, it is useful in a more
general setting when studying prime ideals in Archimedean vector lattices. Indeed, if $E$ is an Archimedean vector lattice and $x\in E$ is a non-zero positive vector, then by the Kakutani representation theorem, the normed lattice $(I_x,\|\cdot\|_x)$ is lattice isometric to a dense sublattice of $C(K)$ for some compact Hausdorff space $K$. Since any prime ideal $P$ in $E$ yields a prime ideal $P\cap I_x$ in $I_x$, this allows us to study the properties of the principal ideals in $E$ and the topological properties of $K$ given the restrictions on the set of prime ideals in question. 

\begin{lemma}\label{L:C(K) finite minimal primes}
Let $K$ be a compact Hausdorff space. If there is a norm dense sublattice $E$ of $C(K)$ that contains only finitely many minimal prime ideals, then $K$ is finite and $E=C(K)$.
\end{lemma}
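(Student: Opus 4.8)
The plan is to bound the number of maximal ideals of $E$ by the (finite) number of minimal prime ideals, and then to read off the size of $K$ from \Cref{L:lemma about maximals}~$(ii)$.

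First I would record the standard fact that every prime ideal of a vector lattice contains a minimal prime ideal. Given a prime ideal $Q$, order the set of prime ideals contained in $Q$ by reverse inclusion; a nonempty chain has the intersection of its members as an upper bound, and this intersection is again a proper ideal (it is contained in any member of the chain) which is prime, since if $x\wedge y=0$ with $x$ and $y$ both outside the intersection, then $x$ and $y$ both lie outside some common member of the chain, contradicting that member being prime. Zorn's lemma then yields a minimal prime ideal contained in $Q$. In particular, since $E$ is norm dense, for every $x\in K$ the ideal $M_x^E$ is maximal by \Cref{L:lemma about maximals}~$(ii)$, hence prime, hence it contains some minimal prime ideal of $E$.

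Next I would invoke the fact, recalled in the preliminaries, that the family of all ideals containing a fixed prime ideal is linearly ordered. It follows that if a minimal prime ideal $P$ is contained in two maximal ideals $M$ and $M'$, then $M$ and $M'$ are comparable and hence equal. Choosing for each maximal ideal $M$ of $E$ a minimal prime ideal $\sigma(M)\subseteq M$, the previous sentence shows that $\sigma$ is injective; since $E$ has only finitely many minimal prime ideals by hypothesis, $E$ has only finitely many maximal ideals. But by \Cref{L:lemma about maximals}~$(ii)$ the assignment $x\mapsto M_x^E$ is a bijection from $K$ onto the set of maximal ideals of $E$, so $K$ must be finite.

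Finally, a finite compact Hausdorff space is discrete, so $C(K)$ is the finite-dimensional space $\R^{|K|}$; a norm dense linear subspace of a finite-dimensional normed space coincides with the whole space, so $E=C(K)$. The one step that is more than bookkeeping is the passage from ``finitely many minimal primes'' to ``finitely many maximal ideals'', which combines the existence of a minimal prime below every prime ideal with the chain structure of the ideals above a fixed prime ideal; the rest is routine.
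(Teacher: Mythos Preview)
Your proof is correct and follows essentially the same approach as the paper's. The paper argues by contradiction (an infinite $K$ would yield infinitely many maximal ideals $M_{x_n}^E$, each containing a distinct minimal prime), citing \cite[Theorem~33.7]{Zaanen} for the existence of a minimal prime below every prime and \cite[Theorem~33.3]{Zaanen} for the fact that two distinct maximal ideals cannot share a minimal prime; you give the contrapositive formulation and supply a self-contained Zorn's lemma argument and the linear-ordering fact from the preliminaries in place of those citations, but the substance is the same.
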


\begin{proof}
Suppose $K$ contains a sequence of distinct points $(x_n)_{n\in\mathbb{N}}$ all of which correspond to maximal ideals $M_{x_n}^E$ by Lemma~\ref{L:lemma about maximals}. Since every maximal ideal contains a minimal prime ideal by \cite[Theorem~33.7]{Zaanen} and distinct maximal ideals do not contain the same minimal prime ideal by \cite[Theorem~33.3]{Zaanen}, it follows that $E$ must contain infinitely many minimal prime ideals, contradicting the assumption. Hence $K$ is finite and so $E=C(K)$ as it is norm dense. 
\end{proof}

The main result of this section is proved by using \Cref{L:lemma about maximals} since Archimedean vector lattices are saturated with copies of dense sublattices of $C(K)$-spaces.

\begin{theorem}\label{T: finitely many minimal primes}
If an Archimedean vector lattice $E$ contains finitely many minimal prime ideals, then it is finite-dimensional. 
\end{theorem}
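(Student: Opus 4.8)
The plan is to reduce the assertion to \Cref{L:C(K) finite minimal primes} applied to the principal ideals of $E$. Fix a non-zero positive vector $x\in E$. By the Kakutani representation theorem, $(I_x,\|\cdot\|_x)$ is lattice isometric to a norm dense sublattice $E_x$ of $C(K_x)$ for some compact Hausdorff space $K_x$. The crucial step is to bound the number of minimal prime ideals of $I_x$ by the number $n$ of minimal prime ideals of $E$. Once this is done, \Cref{L:C(K) finite minimal primes} forces $K_x$ to be finite and $E_x=C(K_x)$, so $I_x$ is finite-dimensional with $\dim I_x=|K_x|\le n$ (using that $\mathbb{R}^k$ has exactly $k$ minimal prime ideals). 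Since $E=\bigcup_{x\ge 0}I_x$ is a directed union — because $0\le x\le y$ implies $I_x\subseteq I_y$ and $E_+$ is directed — of sublattices of dimension at most $n$, it follows that $\dim E\le n<\infty$.

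So the heart of the argument is the following lying-over claim: every minimal prime ideal $Q$ of $I_x$ equals $P\cap I_x$ for some minimal prime ideal $P$ of $E$. To prove it, I would fix $f\in I_x\setminus Q$ and use Zorn's lemma to choose an ideal $J$ of $E$ maximal with respect to the property $J\cap I_x\subseteq Q$; in particular $f\notin J$, so $J$ is proper. I claim $J$ is prime. If not, there are disjoint positive vectors $a,b\in E$ with $a\notin J$ and $b\notin J$, and by maximality of $J$ the ideals of $E$ generated by $J\cup\{a\}$ and by $J\cup\{b\}$, each strictly containing $J$, meet $I_x$ in positive vectors outside $Q$, say $c$ and $d$ respectively. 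Writing $c\le u+\lambda a$ and $d\le v+\mu b$ with $u,v$ positive elements of $J$ and $\lambda,\mu\ge 0$, the disjointness $a\wedge b=0$ together with the identity $(w+p)\wedge(w+q)=w+(p\wedge q)$ gives $c\wedge d\le u+v\in J$, hence $c\wedge d\in J\cap I_x\subseteq Q$; primeness of $Q$ in $I_x$ then yields $c\in Q$ or $d\in Q$, a contradiction. Thus $J$ is prime, and since $J\cap I_x\subseteq Q\subsetneq I_x$ we have $I_x\not\subseteq J$, so $J\cap I_x$ is a prime ideal of $I_x$ contained in $Q$, hence equal to $Q$ by minimality. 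Finally, using Zorn's lemma once more — the intersection of a chain of prime ideals is prime — I would pick a minimal prime ideal $P$ of $E$ with $P\subseteq J$; then $P\cap I_x$ is a prime ideal of $I_x$ contained in $J\cap I_x=Q$, so $P\cap I_x=Q$.

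With the claim in hand, the assignment $Q\mapsto P$ is injective since $Q=P\cap I_x$ is recovered from $P$, so $I_x$ has at most $n$ minimal prime ideals, and the reduction above finishes the proof. I expect the main obstacle to be exactly this lying-over claim: verifying that $J$ is prime — the Zorn's lemma set-up and the disjointness computation producing $c,d$ and the inequality $c\wedge d\le u+v$ — is where the real work lies, while the remaining ingredients (the Kakutani representation, \Cref{L:C(K) finite minimal primes}, the count of minimal primes of $\mathbb{R}^k$, and the directed-union dimension bound) are routine.
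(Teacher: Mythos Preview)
Your proof is correct and follows essentially the same route as the paper: show each principal ideal $I_x$ has only finitely many minimal primes via lying-over, then apply \Cref{L:C(K) finite minimal primes} to conclude $I_x$ is finite-dimensional. The only differences are that you reprove the lying-over statement from scratch (the paper simply cites \cite[Theorem~52.3]{Zaanen}), and you finish with a directed-union argument using the uniform bound $\dim I_x\le n$, whereas the paper invokes \cite[Theorem~61.4]{Zaanen} to get $E\cong c_{00}(\Omega)$ and then \Cref{T:maximal and prime ideals in atomic}; your ending is arguably more self-contained.
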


\begin{proof}
Pick any positive vector $x\in E$ and consider the principal ideal $I_x$. We claim that $I_x$ contains only finitely many minimal prime ideals. Suppose that $I_x$ contains infinitely many minimal prime ideals $\{P_n\colon n\in \mathbb {N}\}$. For each $n\in \mathbb N$ there exists a minimal prime ideal $Q_n$ in $E$ such that $P_n=Q_n\cap I_x$ by \cite[Theorem~52.3]{Zaanen}. Since $P_n\neq P_m$ for $n\neq m$, the family $\{Q_n\colon n\in\mathbb {N}\}$ of minimal prime ideals of $E$ is infinite. Thus, $I_x$ contains only finitely many minimal prime ideals. 

The Banach lattice completion of the normed space $(I_x,\|\cdot\|_x)$ is lattice isometric to $C(K)$ for some compact Hausdorff space $K$ by the Kakutani representation theorem. Hence, $C(K)$ contains a norm dense sublattice with only finitely many minimal prime ideals. By Lemma~\ref{L:C(K) finite minimal primes} we conclude that $K$ is finite, so that $I_x$ is finite-dimensional. 
%Hence the principal ideals of $E$ are all projection bands, so that an application of 
Hence, by \cite[Theorem~61.4]{Zaanen} we have that $E$ is lattice isomorphic to the space $c_{00}(\Omega)$ for some set $\Omega$. By Proposition~\ref{T:maximal and prime ideals in atomic} we have that $\Omega$ is finite, making $E$  finite-dimensional. 
\end{proof}

\section{Cohen's and Kaplansky's theorem in vector lattices}
\label{section: Cohen}
In this section we prove vector lattice analogues of two well-known results in the theory of commutative rings that involve prime ideals. Cohen's theorem (see \cite[Theorem~2]{Cohen50}) states that a commutative ring is Noetherian if and only if every prime ideal is finitely generated, and Kaplansky's theorem (see \cite[Theorem 12.3]{Kaplansky49}) states that in a Noetherian commutative ring every ideal is principal if and only if every maximal ideal is principal. Combining Cohen's theorem and Kaplansky's theorem yields that a commutative ring is a principal ideal ring if and only if every prime ideal is principal, which is referred to as the Cohen-Kaplansky theorem. In vector lattices an ideal is finitely generated if and only if it is principal, so Cohen's theorem becomes a statement about principal prime ideals in vector lattices. We prove the vector lattice analogue of the Cohen-Kaplansky theorem stating that every prime ideal in an Archimedean vector lattice is principal if and only if it is finite-dimensional, which is further equivalent with all ideals being principal. Furthermore, we prove that in a uniformly complete Archimedean vector lattice there are no non-maximal principal prime ideals, and we use this result to prove a vector lattice analogue of the Cohen-Kaplansky theorem for uniformly complete Archimedean vector lattices. That is, every ideal is principal if and only if every maximal ideal is principal, which in turn is equivalent with the vector lattice being finite-dimensional. 

We say that a vector lattice $E$ is \emph{Noetherian} if every ascending chain of ideals in $E$ is stationary. This notion is completely analogous to that of Noetherian rings or modules. Since Noetherian vector spaces are precisely the finite-dimensional ones, the following result should not be too surprising. 

\begin{proposition}\label{P: Noetherian lattice}
For an Archimedean vector lattice $E$ the following statements are equivalent.
\begin{enumerate}
 \item[$(i)$] $E$ is finite-dimensional.
 \item[$(ii)$] $E$ is Noetherian. 
\end{enumerate}
\end{proposition}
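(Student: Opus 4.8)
The plan is to prove the two implications of the equivalence, where $(i)\Rightarrow(ii)$ is essentially immediate and $(ii)\Rightarrow(i)$ is the substantive direction. For $(i)\Rightarrow(ii)$: if $E$ is finite-dimensional as a vector space, then every ascending chain of \emph{linear subspaces} is stationary (the dimensions form a non-decreasing sequence of integers bounded by $\dim E$), and in particular every ascending chain of ideals is stationary, so $E$ is Noetherian. Note that this direction does not even use that $E$ is Archimedean or a vector lattice.

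For the converse $(ii)\Rightarrow(i)$, I would argue by contraposition: assume $E$ is infinite-dimensional and Archimedean, and construct a strictly ascending chain of ideals that never stabilizes. The natural strategy is to produce an infinite sequence of pairwise disjoint non-zero positive vectors $(e_n)_{n\in\mathbb{N}}$ in $E$; then the principal ideals $I_{e_1}\subseteq I_{e_1}\vee I_{e_2}\subseteq I_{e_1}\vee I_{e_2}\vee I_{e_3}\subseteq\cdots$ (equivalently, the ideals generated by the finite sets $\{e_1,\dots,e_n\}$) form a strictly ascending chain, since disjointness guarantees $e_{n+1}\notin I_{\{e_1,\dots,e_n\}}$. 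To obtain such a disjoint sequence, I would pick any positive vector $x$ with $\dim I_x = \infty$ (such an $x$ exists: if every principal ideal were finite-dimensional one could still have $E$ infinite-dimensional, so more care is needed here — see below), represent $(I_x,\|\cdot\|_x)$ as a dense sublattice of some $C(K)$ via Kakutani, observe that $K$ must then be infinite, choose a sequence of distinct points of $K$ together with pairwise disjoint continuous bump functions around them, and pull these back (approximately, using norm density) to genuinely disjoint positive elements of $I_x\subseteq E$.

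The main obstacle is the step of locating a positive vector generating an infinite-dimensional principal ideal: it is conceivable a priori that $E$ is infinite-dimensional while every principal ideal $I_x$ is finite-dimensional. I would handle this by a case split. If some $I_x$ is infinite-dimensional, run the $C(K)$ argument above. If instead every principal ideal is finite-dimensional, then $E$ is the union of an increasing net of its finite-dimensional principal ideals; since $E$ itself is infinite-dimensional, I can choose $x_1$ with $I_{x_1}\neq E$, then $x_2\notin I_{x_1}$, then $x_3\notin I_{x_1}\vee I_{x_2}$ (this join is again finite-dimensional, hence a proper subspace), and so on, yielding directly a strictly ascending chain $I_{x_1}\subsetneq I_{x_1}\vee I_{x_2}\subsetneq\cdots$ of ideals. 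In either case the chain witnesses that $E$ is not Noetherian, completing the contrapositive. A cleaner alternative, which I would actually prefer to write up, bypasses the case split entirely: simply note that if $E$ is infinite-dimensional one can inductively choose $x_{n+1}\in E\setminus(I_{x_1}\vee\cdots\vee I_{x_n})$ provided the join $I_{x_1}\vee\cdots\vee I_{x_n}$ is a proper ideal at each stage — and this is guaranteed because a proper ideal in an infinite-dimensional vector lattice need not be finite-codimensional, so one must instead argue that the ascending union $\bigcup_n (I_{x_1}\vee\cdots\vee I_{x_n})$, if it equalled $E$, would still be a strictly ascending non-stationary chain, which is exactly what we want. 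Thus in all cases infinite-dimensionality produces a non-stationary ascending chain of ideals, and the equivalence follows.
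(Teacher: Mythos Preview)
Your case-split argument for $(ii)\Rightarrow(i)$ is essentially correct but more elaborate than needed, and the ``cleaner alternative'' you propose at the end has a genuine gap. If $E$ has a strong unit (perfectly compatible with being infinite-dimensional, e.g.\ $E=C([0,1])$), then already $I_{x_1}=E$ whenever $x_1$ is a strong unit, so you cannot choose $x_2\notin I_{x_1}$ and the inductive construction halts after one step. Your attempted patch (``the ascending union, if it equalled $E$, would still be a strictly ascending non-stationary chain'') addresses the wrong scenario: the obstruction is not that the union of an infinite chain is $E$, but that a single finite join $I_{x_1}\vee\cdots\vee I_{x_n}$ may already equal $E$, leaving you with only a finite chain and nothing to show.

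The paper avoids all of this by citing a standard structural fact: every infinite-dimensional Archimedean vector lattice contains an infinite sequence of pairwise disjoint non-zero positive vectors (\cite[Theorem~26.10]{Zaanen}). With such a sequence $(e_n)$ in hand, the ideals $J_n:=I_{\{e_1,\dots,e_n\}}$ form the desired strictly ascending chain exactly as in your outline, and the proof is two lines. Your Case~1 (Kakutani on an infinite-dimensional principal ideal, then disjoint bump functions in $C(K)$) is in effect a partial reproof of that theorem; note also that the step of pulling back disjoint elements of $C(K)$ to genuinely disjoint elements of the dense sublattice $I_x$ is not automatic and needs a short additional argument you have not supplied. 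Your Case~2, by contrast, is clean and correct as stated.
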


\begin{proof}
Since every finite-dimensional vector lattice can only have finitely many ideals, it is clear that $(i)$ implies $(ii)$. Assume that $E$ is infinite-dimensional. By \cite[Theorem~ 26.10]{Zaanen} $E$ contains an infinite sequence $(e_n)_{n\in \N}$ of pairwise disjoint positive vectors.
If for each $n\in \mathbb N$ we define the ideal $J_n$ generated by the set $\{e_1,\ldots,e_n\}$,  
then the ascending chain $J_1\subseteq J_2\subseteq\ldots$ of ideals in $E$ is not stationary.   
\end{proof}

In view of \Cref{P: Noetherian lattice} the vector lattice analogue of Cohen's theorem characterizes the finite-dimensional vector lattices among those for which all prime ideals are principal. Furthermore, a well-known alternative definition of a commutative ring being Noetherian is that every ideal is finitely generated, which leads to the following theorem. This result can be thought of as a combination of Cohen's theorem and the Cohen-Kaplansky theorem for vector lattices.

\begin{theorem}[Cohen-Kaplansky theorem for vector lattices]\label{T: Cohen}
Let $E$ be a vector lattice, and consider the following statements.
\begin{enumerate}
    \item[$(i)$] $E$ is finite-dimensional.
    \item[$(ii)$] Every proper ideal in $E$ is principal.
    \item[$(iii)$] Every prime ideal in $E$ is principal. 
\end{enumerate} 
Then $(i)$ implies $(ii)$, $(i)$ implies $(iii)$, and $(iii)$ implies $(ii)$. Moreover, in the case where $E$ is Archimedean, we have that $(ii)$ implies $(i)$, so all statements are equivalent. 
\end{theorem}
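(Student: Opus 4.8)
The plan is to establish the four implications separately, handling the three that hold in general vector lattices first and then the converse $(ii)\Rightarrow(i)$ under the Archimedean hypothesis.

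For $(i)\Rightarrow(ii)$ and $(i)\Rightarrow(iii)$: if $E$ is finite-dimensional, then every subspace—in particular every ideal—is finite-dimensional, hence generated by finitely many vectors; taking absolute values and noting that the ideal generated by a finite set is principal (as recalled in the Preliminaries, $I_S$ is principal iff $S$ is finite) shows every ideal is principal. Since prime ideals are ideals, $(i)\Rightarrow(iii)$ is immediate as well.

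For $(iii)\Rightarrow(ii)$: this is the genuinely ring-theoretic step, mirroring Cohen's argument. Suppose every prime ideal is principal but some proper ideal is not principal. By Zorn's lemma the collection of proper non-principal ideals, ordered by inclusion, has a maximal element $P$ (one must check that the union of a chain of proper non-principal ideals is again proper and non-principal—properness is clear since a principal ideal $I_x$ with $x$ in the union would force $x$, hence the whole union, into one member of the chain; non-principality of the union follows similarly). I claim $P$ is prime. If not, there exist $x,y\notin P$ with $x\wedge y\in P$ (using the characterization of prime ideals), equivalently by maximality the ideals $P+I_x$ and $P+I_y$ are principal, say $P+I_x = I_u$ and $P + I_y = I_v$; one then shows $P$ lies between $I_{u\wedge v}\cdot(\text{something})$ and argues, via the lattice identity for $(P+I_x)\cap(P+I_y)$, that $P$ must itself be principal—contradiction. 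The precise bookkeeping here (the analogue of the ring-theoretic step $P = (P:x)\cdot(P+(x))$) is the step I expect to be the main obstacle, since the vector-lattice surrogate for ideal multiplication has to be replaced by an argument using intersections, the order structure, and property $(iv)$ of the prime-ideal characterization. Once $P$ is shown prime it is principal by hypothesis, contradicting that $P$ is non-principal, so no non-principal proper ideal exists.

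For $(ii)\Rightarrow(i)$ when $E$ is Archimedean: if $E$ were infinite-dimensional, then by \cite[Theorem~26.10]{Zaanen} it contains a sequence $(e_n)_{n\in\N}$ of nonzero pairwise disjoint positive vectors. The ideal $J$ generated by $\{e_n : n\in\N\}$ is proper—indeed, its elements are dominated by finite sums $\sum_{k}\lambda_k e_k$, and by the Archimedean property together with disjointness no single such sum dominates all the $e_n$, so $J\ne E$ and moreover $J$ cannot be principal, since a generator $x$ of $J$ would satisfy $e_n\le \lambda x$ for a fixed $\lambda$ and all $n$, while $x\in J$ forces $x\le \sum_{k=1}^N\mu_k e_k$ for some $N$, making $e_{N+1}\le \lambda\sum_{k=1}^N\mu_k e_k$, which contradicts disjointness. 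This contradicts $(ii)$, so $E$ is finite-dimensional, and all statements are equivalent. (Note $(ii)\Rightarrow(i)$ genuinely needs Archimedeanness, since otherwise pathological non-Archimedean examples can have every proper ideal principal while being infinite-dimensional.)
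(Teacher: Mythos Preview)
Your overall strategy matches the paper's proof, but there are two concrete gaps.

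First, in $(iii)\Rightarrow(ii)$: your Zorn collection (all proper non-principal ideals) is not closed under unions of chains. The argument you give---a generator $x$ of the union would lie in some $J_i$, forcing $J_i=I_x$---establishes non-principality of the union, not properness; you have the two checks swapped. The union of a chain of proper ideals can be all of $E$ when $E$ lacks a strong unit (e.g.\ $E=c_{00}(\N)$ with $J_n$ the ideal generated by the first $n$ coordinate functions). The paper repairs this by fixing a positive $z\notin I$ at the outset and working with non-principal ideals that avoid $z$; the union of a chain then still avoids $z$ and is automatically proper. You also leave the primeness of the maximal element $P$ unfinished. The missing computation is short and purely order-theoretic, with no need for a ring-style conductor $(P:x)$: take $x\wedge y=0$ with $x,y\notin P$, so that $P+I_x=I_v$ and $P+I_y=I_w$ are principal by maximality; distributivity of the lattice of ideals gives
\[
(P+I_x)\cap(P+I_y)=P+(P\cap I_x)+(P\cap I_y)+(I_x\cap I_y)=P+I_{x\wedge y}=P,
\]
while $I_v\cap I_w=I_{v\wedge w}$, so $P=I_{v\wedge w}$ is principal---contradiction.

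Second, in $(ii)\Rightarrow(i)$: your claim that the ideal $J$ generated by \emph{all} the $e_n$ is proper is false in general. Your justification (no finite sum dominates all $e_n$, hence $J\neq E$) is a non sequitur: in $E=c_{00}(\N)$ with $e_n$ the coordinate functions one has $J=E$. What your argument actually proves is that $J$ is not principal, but when $J=E$ this does not contradict $(ii)$, which concerns only proper ideals. The paper's fix is to generate the ideal by $\{e_n:n\geq 2\}$; then $e_1$ is disjoint from every generator, hence from the ideal, so the ideal is proper, and your non-principality argument (which is correct) then yields the contradiction.
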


\begin{proof}
Since every finite-dimensional vector lattice has a strong unit, the fact that $(i)$ implies $(ii)$ and $(iii)$ is clear. We proceed to prove that $(iii)$ implies $(ii)$. To this end, suppose there is a proper ideal $I$ in $E$ which is not principal. Let $z\in E\setminus I$ be positive and define 
\[
\mathscr{I}:=\{J\subseteq E\colon J\ \mbox{is a non-principal ideal with $z\notin J$}\}.
\]
Then $\mathscr{I}$ is non-empty an can be partially ordered by set inclusion. For any chain $(J_i)_i$ in $\mathscr{I}$ it follows that its union $J_0:=\bigcup_i J_i$ is an ideal which is not principal. Indeed, if $J_0=I_x$ for some positive vector $x\in E$, then $x\in J_i$ for some $i$ and so $J_0\subseteq I_x\subseteq J_i\subseteq J_0$ hence $J_i=I_x$, which is impossible by definition of $\mathscr{I}$. Furthermore, $J_0$ does not contain $z$ making it a proper ideal in $E$. By Zorn's lemma $\mathscr{I}$ has a maximal element, say $P$. 

We claim that $P$ is a prime ideal in $E$. Suppose $x,y\in E$ are so that $x\wedge y=0$ and neither $x$ nor $y$ are in $P$. Then the prime ideals $P+I_x$ and $P+I_y$ are principal, so there are positive vectors $v,w\in E$ such that $P+I_x=I_v$ and $P+I_y=I_w$. It follows that $(P+I_x)\cap(P+I_y)=I_v\cap I_w=I_{v\wedge w}$ and by \cite[Proposition~II.2.3]{Schaefer} we find 
\[
(P+I_x)\cap(P+I_y)=P+P\cap I_x+P\cap I_y+I_x\cap I_y=P+I_{x\wedge y}=P.
\]
Hence, $P=I_{v\wedge w}$, but that is impossible as $P\in\mathscr{I}$. Thus, $P$ must be a prime ideal, but then by assumption it must be principal contradicting $P\in\mathscr{I}$ once more. We conclude that every proper ideal in $E$ must be principal.  

Let $E$ be Archimedean such that every proper ideal in $E$ is principal. To prove that $(ii)$ implies $(i)$ by contradiction, suppose that $E$ is not finite-dimensional. Then by \cite[Theorem 26.10]{Zaanen} there exists an infinite sequence $(x_n)_{n\in\N}$ of non-zero positive pairwise disjoint vectors in $E$.  Let $I$ be the ideal generated by $\{x_{n+1}\colon n\in\N\}$ in $E$. By construction $I$ is proper, so it is principal and there exists $x\in E$ such that $I=I_x$. Hence there are $x_{k_1},\ldots,x_{k_m}$ and positive scalars $\lambda_1,\ldots,\lambda_m$ such that 
$$0<x\le \lambda_1 x_{k_1}+\lambda_2 x_{k_2}+\cdots+\lambda_m x_{k_m}.$$ 
Thus there is an $n>1$ such that $x\perp x_{n}$. Since $x_{n}\in I_x$, we conclude that $x_{n}=0$ which is impossible. This contradiction shows that $E$ is finite-dimensional. 
\end{proof}

By \Cref{T: Cohen} it is clear that every infinite-dimensional Archimedean vector lattice contains a non-principal prime ideal. The following example shows that there even exists an atomic infinite-dimensional Archimedean vector lattice whose maximal ideals are all principal, yet none of the remaining prime ideals are principal. 

\begin{example}\label{E:atomic with only max principal}
Let $x:=(x_n)_{n\in\N}$ be an element of $c_0$ and suppose $x_n>0$ for all $n\ge 1$. By adjoining the constant sequence $\mathbf{1}$ to the ideal $I_x$ generated by $x$ in $c$ we obtain the atomic vector lattice $E:=I_x+\mathbb{R}\mathbf{1}$ that is not uniformly complete, and its uniform completion equals $c$.  Since $c$ is lattice isometric to $C(\mathbb{N}\cup\{\infty\})$, where $\mathbb{N}\cup\{\infty\}$ denotes the one-point compactification of $\mathbb{N}$, the maximal ideals in $E$ are of the form $M_n^E=M_n\cap E$ for a maximal ideal $M_n$ in $C(\mathbb{N}\cup\{\infty\})$ by Lemma~\ref{L:lemma about maximals}. For $n\in\mathbb{N}$ the maximal ideal $M_n^E$ is principal with generator $\mathbf{1}-e_n\in E$ and for the adjoined point it follows that $M_\infty=c_0$ and so $M_\infty^E=I_x$ which is principal by construction. 

We claim that no non-maximal prime ideal $P$ can be contained in $M_n^E$ for some $n\in\mathbb N$. Indeed, if there is an $n\in\mathbb{N}$ such that $P\subsetneq M_n^E$, then it follows from the fact that $e_n\notin M_n^E$ and $e_n\wedge (\mathbf{1}-e_n)=0$ that $\mathbf{1}-e_n\in P$, but this would imply that $P=M_n^E$. Thus, there are no proper non-maximal prime ideals contained in $M_n^E$ for all $n\in\mathbb{N}$. 

Suppose now that there exists a principal prime ideal $P$ which is not maximal. Since $P$ is not a subset of $M_n^E$ for $n\in\mathbb N$, we have $P\subseteq I_x$. If $y\in I_x$ is positive such that $P=I_y$, then $y_n>0$ for all $n\ge 1$. Moreover, for all $k\in\mathbb{N}$ there are $x_{n_k}$ and $y_{n_k}$ such that $x_{n_k}>ky_{n_k}$ since $I_y$ is properly contained in $I_x$. Define $v:=(v_m)_{m\in\N}$ and $w:=(w_m)_{m\in\N}$ as follows. 
If $m=n_{2k}$ for some $k\in \mathbb N$, then put $v_m=x_{n_{2k}}$ otherwise put $v_m=0$. Similarly, if $m=n_{2k-1}$ for some $k\in \mathbb N$, then put $w_m=x_{n_{2k-1}}$ otherwise put $w_m=0$. Then $v,w\in I_x$ and $v\wedge w=0$, but there is no multiple of $y$ that dominates $v$ or $w$. We conclude that $P$ is not principal and  $E$ is an infinite-dimensional atomic Archimedean vector lattice that is not uniformly complete in which all maximal ideals are principal and all non-maximal prime ideals are not principal. Note again that by \Cref{T: Cohen} such non-maximal prime ideals must exist in $E$.
\end{example}

Hence, by \Cref{E:atomic with only max principal} it is not true in general that an Archimedean vector lattice in which all maximal ideals are principal must be finite-dimensional. However, when passing to uniformly complete Archimedean vector lattices, it follows that the Cohen-Kaplansky theorem reduces to the vector lattice analogue involving only principal maximal ideals. 

\begin{theorem}\label{Theorem: principal prime is maximal}
Let $E$ be an Archimedean uniformly complete vector lattice. 
\begin{itemize}
    \item[$(i)$] If $E$ has a strong unit, then every principal prime ideal is maximal and equals the disjoint complement of an atom.
    \item[$(ii)$] If $E$ contains a principal prime ideal, then $E$ has a strong unit.
\end{itemize}
\end{theorem}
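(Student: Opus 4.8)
The plan is to deduce both parts from one auxiliary fact: \emph{a uniformly complete Archimedean vector lattice contains no proper order-dense principal prime ideal} — equivalently, an order-dense prime ideal in such a space must be the whole space, which (as prime ideals are proper) means no order-dense principal prime ideal exists at all. I would take this lemma for granted in the two deductions below and then sketch its proof, since it is where the real work lies.

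\emph{Part $(i)$.} By the Kakutani representation theorem a uniformly complete Archimedean vector lattice with a strong unit is lattice isometric to some $C(K)$, so assume $E=C(K)$. Let $P=I_f$ be a principal prime ideal; being proper, $f$ is not a strong unit, so $Z:=\{x\in K\colon f(x)=0\}$ is nonempty. If $Z$ contained two distinct points, Urysohn's lemma would produce two disjoint positive functions, each nonzero at a point of $Z$ and hence dominated by no multiple of $f$, contradicting primeness; so $Z=\{x_0\}$ and $P\subseteq M_{x_0}$. If $x_0$ were not isolated, the only continuous function supported in $\{x_0\}$ would be $0$, so $\{f\}^d=\{0\}$ and $P$ would be a proper order-dense principal prime, against the lemma. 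Hence $x_0$ is isolated, $a:=\mathbf{1}_{\{x_0\}}$ is an atom, and since $f$ is bounded away from $0$ on the clopen compact set $K\setminus\{x_0\}$ one checks directly that $I_f=M_{x_0}=\{a\}^d$, a maximal ideal of codimension one.

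\emph{Part $(ii)$.} Let $P=I_f$ be a principal prime ideal in the uniformly complete Archimedean space $E$. Since $P\cap P^d=\{0\}$ and $P$ is prime, $P^d$ admits no two nonzero disjoint vectors, so the band $P^d$ is linearly ordered; being Archimedean it is therefore $\{0\}$ or one-dimensional. If $P^d=\{0\}$, then $P$ is a proper order-dense principal prime, which the lemma forbids. So $P^d=\mathbb{R}a$ for some $a>0$; a one-dimensional band is generated by a discrete element, so $a$ is discrete, hence an atom, $I_a=\mathbb{R}a$ is a projection band, and $\{a\}^d=P^{dd}\supseteq P$ is a maximal ideal of codimension one. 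Now $E/P$ is linearly ordered (as $P$ is prime) and $\{a\}^d/P$ is a codimension-one ideal of it. For any linearly ordered vector lattice $Q$ with a codimension-one ideal $Q_1$, any positive vector $q_0\notin Q_1$ is a strong unit: the quotient map $Q\to Q/Q_1\cong\mathbb{R}$, which is a lattice homomorphism, sends $q_0$ to some $\alpha>0$; given $q\in Q_+$ and $\lambda>0$ large, the image of $\lambda q_0-q$ is positive, so $\lambda q_0-q\notin Q_1$, and since $Q$ is linearly ordered while images of positive vectors are nonnegative, $\lambda q_0-q\geq 0$, i.e.\ $q\leq\lambda q_0$. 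Thus $E/P$ has a strong unit, and as $P$ is principal, \Cref{l: strong units wrt principal quotients} yields a strong unit for $E$.

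\emph{Proof of the lemma.} Suppose $I_f$ is order dense and prime in a uniformly complete Archimedean $F$ but $I_f\neq F$. Uniform completeness makes $(I_f,\|\cdot\|_f)$ a Banach lattice, so Kakutani gives $I_f\cong C(L)$ with $f$ corresponding to $\mathbf{1}$. If $L$ is finite, then $I_f$ is finite-dimensional; its atoms are atoms of $F$, so $I_f$ is a finite sum of projection bands and hence a projection band, and since $(I_f)^d=\{0\}$ this forces $F=I_f$, a contradiction. If $L$ is infinite, then $C(L)$ contains an infinite pairwise disjoint family of positive vectors, and starting from a positive $z\in F\setminus I_f$ one should assemble, using relative uniform completeness to sum suitably chosen pieces, two disjoint positive vectors dominated by no multiple of $f$, contradicting the primeness of $I_f$. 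This last construction — manufacturing two disjoint vectors outside $I_f$ from a single one — is the technical heart of the whole theorem and the point where uniform completeness is genuinely used; \Cref{E:atomic with only max principal} exhibits a (non-uniformly-complete) space carrying a proper order-dense principal prime ideal, so the hypothesis cannot be dropped.
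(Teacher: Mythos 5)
Your reductions of $(i)$ and $(ii)$ to the auxiliary lemma are essentially sound: the argument in $(i)$ that the zero set of $f$ is a singleton and that a non-isolated zero would make $I_f$ order dense is correct, and your route to $(ii)$ via $P^d$ being linearly ordered, hence $\{0\}$ or $\mathbb{R}a$, followed by the strong-unit argument in the linearly ordered quotient and \Cref{l: strong units wrt principal quotients}, is a legitimate (and slightly different) alternative to the paper's deduction of $(ii)$ from $(i)$. The problem is that the lemma itself is never proved, and it is not a side issue: as you yourself say, it is where all the work lies, and both parts of the theorem collapse onto it. For the case $\dim I_f=\infty$ you only gesture at "assembling, using relative uniform completeness, two disjoint positive vectors dominated by no multiple of $f$, starting from $z\in F\setminus I_f$." As stated this does not work: the infinite disjoint family you invoke lives in $I_f\cong C(L)$, and since $(I_f,\|\cdot\|_f)$ is a Banach lattice, any relatively uniform sum of such pieces with regulator $f$ stays inside $I_f$, which yields no contradiction with primeness; and it is unclear how the element $z$, about which nothing is known beyond $z\notin I_f$, is supposed to enter the construction so as to produce limits outside $I_f$.

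The missing idea is the one the paper supplies: pass to the principal ideal generated by $f$ plus a positive element outside $I_f$ (this ideal is uniformly complete with a strong unit, hence a $C(K)$), represent $f$ as a function with a single zero $x_0$, and, assuming $x_0$ is not isolated, build two disjoint elements of $C(K)$ dominated not by $f$ but by $\sqrt{f}$ (which is \emph{not} in $I_f$), via nested neighborhoods $U_n$ of $x_0$, pairwise disjoint closed "annuli" $F_n$, Tietze extensions of $\sqrt{f}|_{F_n}$, and norm-Cauchy alternating partial sums; the estimate $\sqrt{f}\le\lambda f$ on $F_{4n}$ then contradicts $f(x_n)\to 0$. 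Uniform completeness is used there to sum the pieces in $C(K)$, not to sum pieces of elements of $I_f$. Without this (or an equivalent) construction your lemma, and hence the whole proposal, has a genuine gap; the rest of your argument is correct conditional on it.
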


\begin{proof}
$(i)$ Suppose that $E$ has a strong unit $e$. Since $E$ is uniformly complete it is lattice isometric to $C(K)$ for some compact Hausdorff space $K$ by the Kakutani representation theorem. Hence, we may assume without loss of generality that $E=C(K)$. Let $P=I_f$ be a principal prime ideal in $E$ for some non-negative function $f\in E$. By \cite[Theorem 34.1]{Zaanen} there exists a unique point $x_0$ such that each function in $P$ vanishes at $x_0$. Since $P$ is principal, the point $x_0$ is the unique zero of $f$. 

We will show that $x_0$ must be an isolated point in $K$. Indeed, suppose that
$x_0$ is not isolated in $K$. Then we can inductively construct a sequence $(x_n)_{n\in\N}$ in $K$ such that $(f(x_n))_{n\in\N}$ is a strictly decreasing sequence with $f(x_n)\to 0$. For $n\ge 1$ define the open neighbourhoods of $x_0$ by
\[
U_n:=\{x\in K\colon f(x)<f(x_n)\}.
\]
As $x_{n+1}\in U_n\setminus U_{n+1}$, it follows that $U_{n+1}\subsetneq U_n$ for all $k\ge 1$, and since $\overline{U}_n\subseteq\{x\in K\colon f(x)\le f(x_n)\}$, it follows that $\overline{U}_{n+1}\subseteq U_n$ for all $n\ge 1$. Furthermore, the closure $\overline{U}_{n+2}$ is properly contained in $U_n$ for all $n\ge 1$. Hence, the open neighborhoods $V_n:=U_{2n-1}$ of $x_0$ satisfy $\overline{V}_{n+1}\subsetneq V_n$ for $n\ge 1$ and we define the non-empty closed sets $F_n:=\overline{V}_{2n-1}\setminus V_{2n}$ for $n\ge 1$. Note that for $n<m$ we have
\[
F_n\cap F_m=\overline{V}_{2n-1}\cap \overline{V}_{2m-1} \cap V_{2n}^c \cap  V_{2m}^c=\overline{V}_{2m-1} \cap V_{2n}^c \subseteq V_{2n} \cap V_{2n}^c=\emptyset,
\]
so the closed sets $F_n$ are pairwise disjoint. Define for $n\ge 2$ the continuous function $f_n$ on the closed set $F_n\cup\overline{V}_{2n+1}\cup V_{2n-2}^c$ by
\[f_n(x):=
\begin{cases}
\sqrt{f(x)}& \mbox{if } x\in F_n\\ \hskip 15pt 0 & \mbox{if } x\in \overline{V}_{2n+1}\cup V_{2n-2}^c,
\end{cases}
\]
which by the Tietze extension theorem can be extended to a positive continuous function on $K$ that will also be denoted by $f_n$ such that $f_n\le \sqrt{f}$. The functions $f_{2n}$ and $f_{2m}$ are disjoint for all $m\neq n$ and we define the sequences of partial sums $g_n:=\sum_{k=1}^n f_{4k}$ and $h_n:=\sum_{k=1}^n f_{4k-2}$. Note that for $n$ and $m$ the functions $g_n$ and $h_m$ are disjoint and that for $m<n$ we have $(g_n-g_m)(x)\le \sqrt{f(x)}$ on $V_{8m+6}$ and $(g_n-g_m)(x)=0$ on $V^c_{8m+6}$, so that $(g_n)_{n\in\N}$ is a Cauchy sequence in $C(K)$ for $\|\cdot\|_\infty$. Similarly, we find that $(h_n)_{n\in\N}$ is a Cauchy sequence in $C(K)$ for $\|\cdot\|_\infty$. Let $g$ and $h$ be the uniform limits of $(g_n)_{n\in\N}$ and $(h_n)_{n\in\N}$, respectively, in $C(K)$. Since $g_n\wedge h_n=0$ for all $n\ge 1$, it follows that $g\wedge h=0$ and so either $g\in P$ or $h\in P$ as $P$ is a prime ideal. Suppose that $g\in P$. Then there is a $\lambda>0$ such that $f_{4n}\le g_n\le g\le \lambda f$ for all $n\ge 1$. Hence, for all $x\in F_{4n}$ we have that $\sqrt{f(x)}\le \lambda f(x)$ and since $x_{16n-1}\in F_{4n}$, it follows that $1\le\lambda\sqrt{f(x_{16n-1})}$ for all $n\ge 1$ which contradicts the fact that $f(x_n)\to 0$. Hence, $g\notin P$ and similarly we have $h\notin P$, so we conclude that $x_0$ must be an isolated point. 

The set $K\setminus \{x_0\}$ is therefore closed in $K$ and since $f$ is strictly positive on $K\setminus\{x_0\}$, it attains a strictly positive minimum on $K\setminus \{x_0\}$. Thus, for each function $g\in M_{x_0}$ there exists a $\lambda>0$ such that $g\leq \lambda f$ which implies that $P=I_f=M_{x_0}$ is a maximal ideal. Furthermore, the characteristic function $\chi_{x_0}$ is an atom in $E$ and $M_{x_0}=\{\chi_{x_0}\}^d$.

$(ii)$ Let $P=I_x$ be a prime ideal in $E$, and let $y\in E\setminus P$ be positive. Then $Q:=P+I_y=I_{x+y}$ is an ideal in $E$ and suppose that $Q\subsetneq E$. Then for a positive vector $z\in E\setminus Q$ it follows from the fact that ideals in uniformly complete vector lattices are uniformly complete that $E':=I_{x+y+z}$ is a uniformly complete vector lattice with a strong unit that contains a principal prime ideal that is not maximal. This contradicts $(i)$, so $Q=E$ and we conclude that $E$ has $x+y$ as a strong unit.  
\end{proof}

It will be shown in \Cref{S: Wild Examples} that there are vector lattices containing non-maximal principal prime ideals. In view of \Cref{Theorem: principal prime is maximal}, these vector lattices cannot be uniformly complete. 

\begin{corollary}\label{C:lemma about principal max ideals in C(K)}
Let $K$ be a compact Hausdorff space. For $x\in K$ the maximal ideal $M_x$ in $C(K)$ is a principal ideal if and only if $x$ is an isolated point of $K$.
\end{corollary}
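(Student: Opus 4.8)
The plan is to derive this as a direct consequence of \Cref{Theorem: principal prime is maximal}, since a maximal ideal $M_x$ in $C(K)$ is in particular a prime ideal, and $C(K)$ is a uniformly complete Archimedean vector lattice with strong unit $\mathbf{1}$. First I would prove the ``if'' direction: suppose $x$ is an isolated point of $K$. Then $\{x\}$ is clopen, so the characteristic function $\chi_{\{x\}}$ is a continuous function, hence an atom in $C(K)$, and $M_x = \{\chi_{\{x\}}\}^d$. The complement $K\setminus\{x\}$ is compact, so any $g\in M_x$ attains a minimum of its absolute value... more precisely, since $|g|$ is continuous and vanishes only possibly on a set not containing $x$, and since the constant function $\mathbf 1$ restricted to $K\setminus\{x\}$ is bounded, we get $|g|\le \|g\|_\infty \cdot \chi_{K\setminus\{x\}}$; as $\chi_{K\setminus\{x\}}=\mathbf 1 - \chi_{\{x\}}\in M_x$, this shows $M_x = I_{\mathbf 1 - \chi_{\{x\}}}$, which is principal.

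For the ``only if'' direction, suppose $M_x$ is a principal ideal in $C(K)$. Since $M_x$ is a prime ideal in the uniformly complete Archimedean vector lattice $C(K)$, which has a strong unit, part $(i)$ of \Cref{Theorem: principal prime is maximal} applies directly: every principal prime ideal in $E$ equals the disjoint complement of an atom. Thus $M_x = \{a\}^d$ for some atom $a\in C(K)$. I would then identify $a$: an atom in $C(K)$ must be a positive scalar multiple of $\chi_{\{y\}}$ for some isolated point $y\in K$ (atoms correspond to minimal projection bands, and these in $C(K)$ are exactly the $\mathbb R\chi_{\{y\}}$ with $\{y\}$ clopen). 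Then $\{a\}^d = \{f\in C(K): f(y)=0\} = M_y$, and since $M_x = M_y$ forces $x = y$ by the uniqueness of representing points (e.g.\ via Urysohn's lemma, as in \Cref{L:lemma about maximals}$(ii)$ applied with $E = C(K)$), we conclude $x = y$ is isolated.

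The only mild subtlety is the verification that every atom in $C(K)$ has the form $\lambda\chi_{\{y\}}$ for an isolated point $y$; this is standard — an atom $a$ generates a one-dimensional band $\{a\}^{dd}$, which as a band in $C(K)$ corresponds to a clopen subset of $K$ of ``size one,'' i.e.\ a single isolated point — but one could alternatively bypass this by simply noting that part $(i)$ of \Cref{Theorem: principal prime is maximal} already tells us $M_x$ is maximal and equals $\{a\}^d$, and then observing that the support structure forces the zero set of $a$ to be $K\setminus\{x\}$, so $x$ is isolated because $\{x\}$ is the (clopen) support of $a$. I expect no real obstacle here; the corollary is essentially a repackaging of the strong-unit case of the preceding theorem together with the elementary description of atoms in $C(K)$.
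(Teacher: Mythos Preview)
Your proposal is correct and essentially follows the paper's approach: the ``if'' direction is identical (generate $M_x$ by $\mathbf{1}-\chi_{\{x\}}$), and the ``only if'' direction in both cases rests on \Cref{Theorem: principal prime is maximal}. The only difference is that the paper cites the \emph{proof} of that theorem, where it was shown directly that the unique zero of a generator of a principal prime ideal in $C(K)$ is isolated, whereas you cite the \emph{statement} (principal prime ideals are disjoint complements of atoms) and then add the extra step of identifying atoms in $C(K)$ with scalar multiples of $\chi_{\{y\}}$ for isolated $y$; this detour is harmless but unnecessary, since isolation was already established en route to that statement.
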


\begin{proof}
It was shown in the proof of \Cref{Theorem: principal prime is maximal} that $x$ must be an isolated point of $K$ whenever $M_x$ is principal. On the other hand, if $x$ is an isolated point of $K$, then the characteristic function $\chi_x$ is continuous on $K$ and it follows that $M_x=I_f$ for $f:=\mathbf{1}-\chi_x$.  
\end{proof}

The vector lattice analogue of the Cohen-Kaplansky theorem for uniformly complete Archimedean vector lattices is the following.    

\begin{theorem}[Cohen-Kaplansky theorem for uniformly complete vector lattices]\label{Main Cohen}
The following statements are equivalent for a uniformly complete Archimedean vector lattice $E$. 
\begin{enumerate}
    \item[$(i)$] $E$ is finite-dimensional.
    \item[$(ii)$] Every proper ideal in $E$ is principal.
    \item[$(iii)$] $E$ contains maximal ideals, and every maximal ideal in $E$ is principal.
\end{enumerate} 
\end{theorem}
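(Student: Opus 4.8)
The plan is to assemble the equivalences from the results already proved in this section, using the Kakutani representation theorem as the bridge to the topological picture. Since $E$ is Archimedean, the equivalence of $(i)$ and $(ii)$ is precisely \Cref{T: Cohen}: there $(i)\Rightarrow(ii)$ is immediate from the existence of a strong unit in a finite-dimensional vector lattice, and $(ii)\Rightarrow(i)$ in the Archimedean case is the content of the last paragraph of its proof. So it suffices to link $(iii)$ to these. For $(i)\Rightarrow(iii)$, a finite-dimensional vector lattice has a strong unit and hence contains maximal ideals by \cite[Theorem~27.4]{Zaanen}, and by the already established $(i)\Rightarrow(ii)$ every ideal, in particular every maximal ideal, is principal.

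The main direction is $(iii)\Rightarrow(i)$. First I would produce a strong unit: pick a principal maximal ideal $M$ in $E$; since $M$ has co-dimension one, the quotient $E/M$ is one-dimensional and thus has a strong unit, so \Cref{l: strong units wrt principal quotients} gives that $E$ has a strong unit. (Alternatively, a maximal ideal is prime, so this also follows from \Cref{Theorem: principal prime is maximal}$(ii)$.) Now $E$ is a uniformly complete Archimedean vector lattice with a strong unit, hence lattice isometric to $C(K)$ for some compact Hausdorff space $K$ by the Kakutani representation theorem; we may assume $E=C(K)$. By \Cref{L:lemma about maximals}$(ii)$ the maximal ideals of $C(K)$ are exactly the $M_x$ with $x\in K$, and each is principal by hypothesis, so \Cref{C:lemma about principal max ideals in C(K)} forces every point of $K$ to be isolated. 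A compact space all of whose points are isolated is finite, since the singletons form an open cover; hence $K$ is finite and $E=C(K)$ is finite-dimensional.

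I do not expect a genuine obstacle here, as the substance is carried by \Cref{T: Cohen}, \Cref{l: strong units wrt principal quotients} (or \Cref{Theorem: principal prime is maximal}), and \Cref{C:lemma about principal max ideals in C(K)}; what remains is bookkeeping. The one point that needs care is why $(iii)$ must posit the existence of maximal ideals rather than deduce it: a uniformly complete Archimedean vector lattice need not contain any maximal ideal when it lacks a strong unit, in which case the second clause of $(iii)$ would be vacuously true while $(i)$ fails. The explicit existence hypothesis in $(iii)$ is stated precisely to exclude this degenerate situation.
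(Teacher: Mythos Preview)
Your proof is correct and follows essentially the same route as the paper: equivalence of $(i)$ and $(ii)$ via \Cref{T: Cohen}, and $(iii)\Rightarrow(i)$ by using \Cref{l: strong units wrt principal quotients} to obtain a strong unit, passing to $C(K)$ via Kakutani, and then invoking \Cref{C:lemma about principal max ideals in C(K)} to conclude that $K$ is discrete and hence finite. Your additional remarks (the explicit citation for existence of maximal ideals in the finite-dimensional case, and the observation about why the existence clause in $(iii)$ is needed) are correct elaborations but not new ideas.
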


\begin{proof}
Statements $(i)$ and $(ii)$ are equivalent by \Cref{T: Cohen}, and the fact that $(i)$ implies $(iii)$ is clear. Suppose $E$ contains maximal ideals and all of them are principal. By \Cref{l: strong units wrt principal quotients} the vector lattice $E$ has a strong unit, so it is lattice isometric to a $C(K)$-space. It follows from \Cref{C:lemma about principal max ideals in C(K)} that every point in $K$ is isolated. We conclude that $K$ must be finite in order to remain compact, so $E$ must be finite-dimensional.
\end{proof}

\section{Prime Noetherian vector lattices}
As was proved in \Cref{P: Noetherian lattice}, all Noetherian Archimedean vector lattices are finite-dimensional. When considering chains of ideals in vector lattices, the prime ideals are also a natural class of ideals to study. For example, all ideals that contain a fixed prime ideal are prime ideals and this set is linearly ordered by set inclusion. The vector lattices for which the ascending chains of prime ideals are finite will be studied in this section, and we propose the following definition.
A vector lattice $E$ is said to be \emph{prime Noetherian} if every ascending chain of prime ideals $P_1\subseteq P_2\subseteq \dots$ in $E$ is stationary. Uniformly complete Archimedean prime Noetherian vector lattices are completely characterized in \Cref{P:C(K) prime Noetherian finte K} and \Cref{T:prime Noetherian uniformly complete}, and these characterizations depend on the existence of a strong unit. More specifically, if $E$ has a strong unit, then $E$ is finite-dimensional and in general $E$ must be lattice isomorphic to $c_{00}(\Omega)$ for some set $\Omega$.     
%Similar to the theory of commutative rings, there is a notion of a Krull dimension for prime Noetherian vector lattices. 
We start this section by studying how the prime Noetherian property transfers between sublattices, ideals, and the whole vector lattice.

\begin{proposition}\label{P: sublattice prime Noetherian}
Let $E$ be a vector lattice and let $F$ be a vector sublattice. If $E$ is prime Noetherian, then $F$ is prime Noetherian.
\end{proposition}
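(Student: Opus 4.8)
The plan is to show that if $Q_1 \subseteq Q_2 \subseteq \dots$ is an ascending chain of prime ideals in the sublattice $F$, then it must be stationary, by lifting it to an ascending chain of prime ideals in $E$ and invoking the hypothesis that $E$ is prime Noetherian. The natural candidate for the lift is, for each $n$, the ideal $P_n$ generated in $E$ by $Q_n$; these clearly form an ascending chain of ideals in $E$. The two things to verify are that each $P_n$ is a \emph{proper} prime ideal of $E$, and that the chain $(P_n)$ being stationary forces the original chain $(Q_n)$ to be stationary, i.e.\ that $P_n \cap F = Q_n$.

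First I would establish the recovery property $P_n \cap F = Q_n$. The inclusion $Q_n \subseteq P_n \cap F$ is immediate. For the reverse inclusion, take $y \in P_n \cap F$; by the description of the generated ideal recalled in the preliminaries, $|y| \le \sum_{k=1}^m \lambda_k x_k$ for some positive $x_1,\dots,x_m \in Q_n$, hence $|y| \le \lambda(x_1 \vee \dots \vee x_m)$ with $\lambda = \sum \lambda_k$; since $Q_n$ is an ideal of $F$ and the finite supremum $x_1 \vee \dots \vee x_m$ lies in $Q_n$, and since $y \in F$, we get $y \in Q_n$ because $Q_n$ is an ideal in the sublattice $F$. (Here one uses that $F$ is a sublattice, so the sup computed in $F$ agrees with that in $E$.) So $P_n \cap F = Q_n$, and consequently $P_n \ne E$: if $P_n = E$ then $Q_n = P_n \cap F = F$, contradicting that $Q_n$ is a proper ideal of $F$.

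Next I would check that $P_n$ is prime in $E$, using characterization $(ii)$ of the prime ideal theorem in the excerpt: suppose $x \wedge y = 0$ in $E$ with $x, y \ge 0$. I want to reduce to a disjointness statement inside $F$. The cleanest route is contrapositive: assume $x \notin P_n$ and $y \notin P_n$ and derive a contradiction. Actually the slicker approach is to use a maximal prime ideal argument — but since the generated ideal $P_n$ need not be prime a priori, I would instead argue as follows: among all prime ideals of $E$ containing $P_n$, pick a minimal one $\tilde P_n$ (these exist by the theory cited, e.g.\ $\tilde P_n$ can be found avoiding any fixed element outside $P_n$). Then $\tilde P_n \cap F$ is a prime ideal of $F$ containing $Q_n$. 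This is the main subtlety: I need the lift to be \emph{faithful}, i.e.\ $\tilde P_n \cap F = Q_n$, not merely $\supseteq Q_n$. This will require knowing that $Q_n$ is itself, say, an intersection of prime ideals of $F$, or more directly choosing $\tilde P_n$ carefully as a minimal prime over $P_n$ and showing $\tilde P_n \cap F$ is a minimal prime over $Q_n$ — and that if $Q_n$ were itself prime then this intersection collapses to $Q_n$.

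The hard part, then, is arranging a well-defined, chain-preserving, faithful lift $n \mapsto \hat P_n$ of prime ideals from $F$ to $E$ with $\hat P_n \cap F = Q_n$ and $\hat P_n \subseteq \hat P_{n+1}$. Given the $Q_n$ are already prime in $F$, I expect the correct statement to be: the generated ideal $P_n$ in $E$ may fail to be prime, but every minimal prime ideal of $E$ over $P_n$ restricts to a prime ideal of $F$ containing $Q_n$, and at least one restricts to exactly $Q_n$ (using that $Q_n = P_n \cap F$ together with the fact that $Q_n$ is prime, hence equal to the intersection of the minimal primes over it, which lift compatibly). To keep the chain ascending I would build the $\hat P_n$ inductively: having chosen $\hat P_n$ with $\hat P_n \cap F = Q_n$, I choose $\hat P_{n+1}$ to be a minimal prime ideal of $E$ containing $\hat P_n \cup Q_{n+1}$ that restricts to $Q_{n+1}$ — feasibility of this step is exactly where primeness of $Q_{n+1}$ and the structure of the family of primes above a given prime (which is linearly ordered, per the excerpt) are used. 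Once such a chain $\hat P_1 \subseteq \hat P_2 \subseteq \cdots$ is in hand, prime Noetherianity of $E$ makes it stationary, and intersecting with $F$ shows $(Q_n)$ is stationary, completing the proof.
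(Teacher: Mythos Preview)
Your overall architecture matches the paper's: lift the chain $(Q_n)$ of prime ideals in $F$ to an ascending chain of prime ideals $(\hat P_n)$ in $E$ with $\hat P_n\cap F=Q_n$, then use that $E$ is prime Noetherian. Your verification that the ideal of $E$ generated by $Q_n$ intersects $F$ in $Q_n$ is correct and useful.

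The genuine gap is the existence of the lifts. You correctly abandon the generated ideals $P_n$ (they need not be prime), but your replacement argument is not complete. For the base case you assert that ``at least one minimal prime over $P_n$ restricts to exactly $Q_n$'', justifying this by ``$Q_n$ is prime, hence equal to the intersection of the minimal primes over it, which lift compatibly''. Since $Q_n$ is already prime, the only minimal prime over $Q_n$ in $F$ is $Q_n$ itself, so this sentence carries no information; it does not show that some minimal prime of $E$ over $P_n$ has intersection with $F$ equal to $Q_n$ rather than a strictly larger prime of $F$. For the inductive step you invoke the linear ordering of the primes of $E$ above $\hat P_n$, but that only tells you the restrictions $\hat P\cap F$ form a chain of primes in $F$ above $Q_n$; it does not by itself produce a prime of $E$ above $\hat P_n$ whose restriction is \emph{exactly} $Q_{n+1}$.

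What is actually needed is a prime-extension theorem for Riesz subspaces: given a prime ideal $Q$ of $F$, there is a prime ideal $P$ of $E$ with $P\cap F=Q$. One proves this by a Zorn argument on ideals of $E$ containing the ideal generated by $Q$ and disjoint from the $\wedge$-closed set $F_+\setminus Q$ (closure under $\wedge$ is precisely where primeness of $Q$ enters). The paper invokes this as \cite[Theorems~52.2 and 52.4]{Zaanen}, and handles the inductive step by passing to the quotient $E/\hat P_k$: the image $\pi(F)$ is a sublattice of $E/\hat P_k$, $\pi(Q_{k+1})$ is prime in $\pi(F)$, the extension theorem yields a prime $P'$ in $E/\hat P_k$ with $P'\cap\pi(F)=\pi(Q_{k+1})$, and $\hat P_{k+1}:=\pi^{-1}(P')$ is the desired lift. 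Your outline becomes a proof once you supply (or cite) this extension step; without it the ``feasibility'' you flag remains unproved.
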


\begin{proof}
Suppose that $Q_1\subseteq Q_2\subseteq\ldots$ is an ascending chain of distinct proper prime ideals in $F$. We can now create an ascending chain of distinct prime ideals $P_1\subseteq P_2\subseteq\ldots$ in $E$ such that $Q_n=P_n\cap F$ for all $n\in \mathbb{N}$ by induction. Indeed, for $Q_1\subseteq Q_2$ there are distinct prime ideals $P_1\subseteq P_2$ in $E$ such that $Q_1=P_1\cap F$ and $Q_2=P_2\cap F$ by \cite[Theorem~52.4]{Zaanen}. Suppose that for $Q_1\subseteq Q_2\subseteq\ldots\subseteq Q_k$ there are distinct prime ideals $P_1\subseteq P_2\subseteq\ldots\subseteq P_k$ in $E$ such that $Q_i=P_i\cap F$ for $i=1,\ldots,k$. Consider the  canonical Riesz homomorphism $\pi\colon E\to E/P_k$ and note that $\pi(F)$ is a sublattice of $\pi(E)$. The zero ideal is prime in both quotient vector lattices, so that $\pi(Q_{k+1})$ is a prime ideal in $\pi(F)$ by \cite[Theorem~33.3(iii)]{Zaanen}. Furthermore, by \cite[Theorem~52.2]{Zaanen} there is a prime ideal $P'$ in $\pi(E)$ such that $\pi(Q_{k+1})=P'\cap \pi(F)$, and define $P_{k+1}:=\pi^{-1}(P')$. It is readily verified that $P_{k+1}$ is a prime ideal in $E$ that contains $P_{k}$. Furthermore, if $x\in P_{k+1}\cap F$ then there is a $p\in P_k$ such that $x-p\in Q_{k+1}$, hence $p\in P_k\cap F=Q_k$ so that $x\in Q_{k+1}$. Since for $x\in Q_{k+1}$ it follows that $\pi(x)\in P'$ we also have $x\in P_{k+1}$, so $Q_{k+1}=P_{k+1}\cap F$. Note that since $Q_{k}$ and $Q_{k+1}$ are distinct, we must have that $P_{k}$ and $P_{k+1}$ are distinct, which concludes the induction argument. Since we assumed $E$ to be prime Noetherian, the chain $Q_1\subseteq Q_2\subseteq\ldots$ must be stationary. Hence, $F$ is prime Noetherian. 
\end{proof}

If on the other hand we have a vector lattice $E$ with a prime Noetherian sublattice $F$, then it is not true in general that $E$ is prime Noetherian even if $F$ is an order dense ideal. See the paragraph preceding \Cref{C:prime Noetherian char uc}. In the case where $F$ is a projection band we can prove the following.  

\begin{proposition}
Let $B$ be a projection band in a vector lattice $E$. Then $E$ is prime Noetherian if and only if $B$ and $B^d$ are prime Noetherian.  
\end{proposition}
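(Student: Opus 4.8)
The plan is to exploit the fact that for a projection band $B$ in $E$, every vector decomposes uniquely as $x = x_1 + x_2$ with $x_1 \in B$ and $x_2 \in B^d$, and to relate the prime ideals of $E$ to those of $B$ and $B^d$ via this decomposition. The forward direction is immediate from \Cref{P: sublattice prime Noetherian}: if $E$ is prime Noetherian, then since $B$ and $B^d$ are both vector sublattices of $E$, they are each prime Noetherian. So the real content is the converse, and the key structural observation is that a prime ideal cannot contain both $B$ and $B^d$ nontrivially in a balanced way. More precisely, since $B \cap B^d = \{0\} \subseteq P$ for any prime ideal $P$, condition $(iv)$ of the characterization theorem for prime ideals forces $B \subseteq P$ or $B^d \subseteq P$. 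This is the dichotomy that drives the argument.

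For the converse, suppose $B$ and $B^d$ are prime Noetherian and let $P_1 \subseteq P_2 \subseteq \cdots$ be an ascending chain of prime ideals in $E$. First I would observe that the chain $P_1 \cap B \subseteq P_2 \cap B \subseteq \cdots$ consists of prime ideals in $B$ whenever the terms are proper in $B$ (a prime ideal of $E$ intersected with a sublattice is prime in that sublattice, provided it remains proper), and likewise for $B^d$. By the dichotomy above, for each $n$ either $B \subseteq P_n$ or $B^d \subseteq P_n$; since the $P_n$ are increasing, there is an index $N$ after which the ``side'' being swallowed stabilizes — say $B \subseteq P_n$ for all $n \geq N$ (the other case is symmetric), or one side is swallowed from the very start. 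Passing to the tail $n \geq N$, we have $B \subseteq P_n$, so that $P_n = B \oplus (P_n \cap B^d)$ using the band decomposition $E = B \oplus B^d$, and $P_n \cap B^d$ is a proper prime ideal of $B^d$ (properness because $P_n \neq E$). Thus the chain $(P_n \cap B^d)_{n \geq N}$ is an ascending chain of prime ideals in $B^d$, which is stationary by hypothesis, and therefore $(P_n)_{n \geq N}$ is stationary in $E$. Consequently the original chain is stationary and $E$ is prime Noetherian.

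The main obstacle I anticipate is handling the edge cases around properness and the precise formulation of the dichotomy. One must check that $B \subseteq P_n$ genuinely implies $P_n = B \oplus (P_n \cap B^d)$: the inclusion $\supseteq$ is clear, and for $\subseteq$ one writes $x \in P_n$ as $x_1 + x_2$ with $x_1 \in B \subseteq P_n$, hence $x_2 = x - x_1 \in P_n \cap B^d$. One must also verify that $P_n \cap B^d$ is prime in $B^d$: it is an ideal of $B^d$, it is proper since otherwise $B^d \subseteq P_n$ together with $B \subseteq P_n$ would give $P_n = E$, and the prime property is inherited via characterization $(ii)$ since disjointness is absolute. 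Finally, there is the bookkeeping subtlety that the ``side swallowed'' might oscillate before stabilizing — but monotonicity of the chain makes this impossible: once $B \subseteq P_n$ it remains so for all larger indices, and similarly for $B^d$, so at worst one of the two sides is eventually contained, and we simply pass to that tail. None of these steps involves hard analysis; the argument is purely order-theoretic and should go through cleanly once the band decomposition is invoked.
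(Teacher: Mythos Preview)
Your proof is correct and follows essentially the same route as the paper's: both arguments reduce the chain $(P_n)$ in $E$ to chains in $B$ and $B^d$ via the band decomposition $P_n = (P_n \cap B) \oplus (P_n \cap B^d)$, then use the prime Noetherian hypotheses on the pieces. The only cosmetic difference is that you invoke the dichotomy $B \subseteq P_n$ or $B^d \subseteq P_n$ explicitly (so one intersection is trivially the whole band and you track only the other), whereas the paper treats both intersections symmetrically and appeals to distributivity of the ideal lattice; the underlying mechanism is identical.
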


\begin{proof}
If $E$ is prime Noetherian, then $B$ and $B^d$ are prime Noetherian by Proposition~\ref{P: sublattice prime Noetherian}. 
Assume now that $B$ and $B^d$ are prime Noetherian and let $P_1\subseteq P_2\subseteq \ldots$ be a chain of prime ideals in $E$. Then 
$$P_1\cap B\subseteq P_2\cap B\subseteq \ldots \qquad \textrm{and} \qquad P_1\cap B^d\subseteq P_2\cap B^d\subseteq \ldots$$ are increasing chains of prime ideals in $B$ and $B^d$, respectively. Hence, there exists $n\in \mathbb N$ such that for all $m\geq n$ we have
$$P_m\cap B=P_{n}\cap B \qquad \textrm{and} \qquad P_m\cap B^d=P_{n}\cap B^d.$$
Since the lattice of ideals in a vector lattice is distributive, for each $m\geq n$ we obtain 
\[P_m=(P_m\cap B)\oplus (P_m\cap B^d)=(P_{n}\cap B)\oplus (P_{n}\cap B^d)=P_{n}.\qedhere\]
\end{proof}

In a prime Noetherian vector lattice there are large prime ideals in the sense that they have finite co-dimension and every prime ideal is always contained in a large prime ideal. 

\begin{proposition}\label{P:Noetherian fin dim quotients}
In a prime Noetherian vector lattice every prime ideal is contained in a prime ideal of finite co-dimension.
\end{proposition}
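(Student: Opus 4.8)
The plan is to show that every prime ideal $P$ in a prime Noetherian vector lattice $E$ is in fact contained in a \emph{maximal} ideal, since maximal ideals have co-dimension one by \cite[Corollary~p.~66]{Schaefer} and hence finite co-dimension. So the whole argument reduces to producing a maximal ideal above an arbitrary prime ideal.

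First I would recall from the preliminaries that the family $\mathscr{P}$ of all proper ideals of $E$ that contain $P$ is linearly ordered by inclusion and, by \cite[Theorem~33.3]{Zaanen}, consists entirely of prime ideals; it is non-empty since $P\in\mathscr{P}$. The key point, and the only place the hypothesis is used, is that a non-empty linearly ordered poset satisfying the ascending chain condition has a greatest element: if $\mathscr{P}$ had no greatest element, one would inductively pick $Q_1:=P$ and, given $Q_n\in\mathscr{P}$ which is not the largest element, some $Q_{n+1}\in\mathscr{P}$ with $Q_n\subsetneq Q_{n+1}$ (using that $\mathscr{P}$ is totally ordered), producing a non-stationary ascending chain $Q_1\subsetneq Q_2\subsetneq\cdots$ of prime ideals in $E$ and contradicting that $E$ is prime Noetherian. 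So $\mathscr{P}$ has a greatest element $Q$.

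It then remains to observe that $Q$ is a maximal ideal: it is proper, and any ideal $J$ with $Q\subsetneq J\subseteq E$ is either all of $E$ or a proper ideal containing $P$, i.e.\ a member of $\mathscr{P}$, which is impossible since $Q$ is the largest element of $\mathscr{P}$. Hence $Q$ is a maximal --- and in particular prime --- ideal of co-dimension one containing $P$, which proves the proposition.

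I do not expect a serious obstacle here; the whole argument rests on the elementary fact that total order together with the ascending chain condition yields a maximum, applied to the totally ordered family of ideals above $P$. The only subtlety to keep in mind is to work throughout with \emph{proper} ideals containing $P$, so that the maximum produced is a genuine maximal ideal rather than $E$ itself, and to note that a chain inside $\mathscr{P}$ is literally an ascending chain of prime ideals of $E$, so the prime Noetherian hypothesis applies verbatim.
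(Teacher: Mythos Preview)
Your argument is correct. Both your proof and the paper's build a strictly increasing chain of prime ideals to reach a contradiction; the core idea is the same. The packaging differs slightly: the paper assumes, for contradiction, that \emph{every} ideal above $P$ has infinite-dimensional quotient and uses this at each step to guarantee a strictly larger proper ideal, concluding only that some prime above $P$ has finite co-dimension. You instead exploit the total ordering of $\mathscr{P}$ directly to locate a \emph{greatest} element, which is then automatically a maximal ideal of co-dimension one. Your route therefore proves a marginally stronger statement in one stroke --- indeed, the paper establishes the ``contained in a maximal ideal'' conclusion separately in the proposition immediately following this one, using the present proposition as input. So your approach effectively merges the two results, at the cost of invoking the (easy) fact that a non-empty chain with ACC has a maximum, while the paper's approach keeps the two steps separate.
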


\begin{proof}
Pick a prime ideal $P$ in a prime Noetherian vector lattice $E$. Suppose that for each ideal $Q$ containing $P$ the vector lattice $E/Q$ is infinite-dimensional. Let $P_1:=P$. Let $P\subsetneq P_2$ be any proper ideal in $E$. By assumption $E/P_2$ is infinite-dimensional. Pick any proper ideal $P_2\subsetneq P_3$ in $E$. By assumption the vector lattice $E/P_3$ is infinite-dimensional. 
Inductively we can construct an ascending chain $P_1\subsetneq P_2\subsetneq P_3\subsetneq \dots$  of distinct prime ideals which contradicts the assumption that $E$ is prime Noetherian.   
\end{proof}

It is not true in general that vector lattices contain maximal ideals, however, prime Noetherian vector lattices always do.

\begin{proposition}
Let $E$ be an at least two-dimensional prime Noetherian vector lattice. Then every proper ideal of $E$ is contained in a maximal ideal.
\end{proposition}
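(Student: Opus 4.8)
The plan is, for a given proper ideal $I\subseteq E$, to locate a prime ideal that is maximal among all prime ideals containing $I$, and then to observe that such a prime ideal is automatically a maximal ideal of $E$. The starting point is the standard fact that every proper ideal is contained in a proper prime ideal: since $I\neq E$ we may pick $g\in E\setminus I$, and then $f:=|g|$ is a positive vector not lying in $I$ (otherwise $g\in I$ as well), so by \cite[Theorem~33.4]{Zaanen} (or by applying Zorn's lemma directly to the ideals containing $I$ that avoid $f$) there is a proper prime ideal $P$ with $I\subseteq P$ and $f\notin P$. In particular, the family
\[
\mathscr{S}:=\{Q\colon Q\text{ is a proper prime ideal of }E\text{ and }I\subseteq Q\}
\]
is non-empty.

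Next I would invoke the prime Noetherian hypothesis to extract a maximal element of $\mathscr{S}$ with respect to set inclusion: were $\mathscr{S}$ to have no maximal element, then starting from $P\in\mathscr{S}$ and repeatedly passing to a strictly larger member of $\mathscr{S}$ would produce a strictly ascending chain $P_1\subsetneq P_2\subsetneq\cdots$ of prime ideals of $E$, contradicting that every ascending chain of prime ideals is stationary. Let $M$ be such a maximal element of $\mathscr{S}$. It then remains to check that $M$ is a maximal ideal of $E$: it is proper, and for any ideal $J$ with $M\subseteq J\subseteq E$ and $J\neq E$, the ideal $J$ is prime because it contains the prime ideal $M$ (see \cite[Theorem~33.3]{Zaanen}) and it still contains $I$, hence $J\in\mathscr{S}$, and so $J=M$ by maximality of $M$. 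Thus $M$ is a maximal ideal of $E$ containing $I$, as desired.

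I do not anticipate a genuine obstacle here; the two points deserving a moment's attention are that $\mathscr{S}$ is non-empty (i.e.\ that a proper ideal always sits inside a proper prime ideal) and the passage from ``$M$ maximal in $\mathscr{S}$'' to ``$M$ a maximal ideal of $E$'', which works precisely because every ideal containing a prime ideal is again prime. As an alternative, one could avoid the chain argument and instead invoke \Cref{P:Noetherian fin dim quotients}: taking $P$ as above and then $Q\supseteq P$ of finite co-dimension, the quotient $E/Q$ is a non-zero finite-dimensional vector lattice, hence has a strong unit and therefore a maximal ideal by \cite[Theorem~27.4]{Zaanen}; pulling this maximal ideal back along the quotient map $E\to E/Q$ then yields a maximal ideal of $E$ containing $I$. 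The hypothesis $\dim E\geq 2$ serves only to rule out the degenerate low-dimensional cases.
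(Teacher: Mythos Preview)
Your main argument is correct and is actually more direct than the paper's. The paper first invokes \Cref{P:Noetherian fin dim quotients} to pass from a prime $P\supseteq I$ to a prime $Q\supseteq P$ with $\dim E/Q<\infty$, and then takes a maximal chain $Q=Q_1\subseteq\cdots\subseteq Q_{n-1}\subseteq Q_n=E$ of ideals, observing that $Q_{n-1}$ is the desired maximal ideal. Your route bypasses the finite-codimension step entirely: you use the prime Noetherian hypothesis directly to produce a maximal element $M$ of the family $\mathscr{S}$ of proper primes above $I$, and then the fact that every ideal containing a prime is prime immediately forces $M$ to be maximal in $E$. This is cleaner and makes no appeal to \Cref{P:Noetherian fin dim quotients}; the paper's approach, on the other hand, illustrates how the earlier proposition feeds into the theory. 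Your ``alternative'' paragraph is essentially the paper's proof, phrased via strong units in $E/Q$ rather than via a maximal chain between $Q$ and $E$.
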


\begin{proof}
Let $I$ be a proper ideal in $E$. By \cite[Theorem~33.5]{Zaanen} the ideal $I$ is contained in a non-trivial prime ideal $P$. By \Cref{P:Noetherian fin dim quotients} the ideal $P$ is contained in a prime ideal $Q$ such that the dimension of $E/Q$ is finite. Let
\[
Q=Q_1\subseteq Q_2 \subseteq \ldots \subseteq Q_{n-1} \subseteq Q_n=E
\]
be the maximal chain $\mathcal C$ of ideals between $Q$ and $E$. Clearly, $Q_{n-1}$ is then a maximal ideal in $E$ which contains $I$. 
\end{proof}

%\begin{proof}
%Let $I$ be a proper ideal in $E$. By \cite[Theorem~33.5]{Zaanen} the ideal $I$ is contained in a non-trivial prime ideal $P$. The family of ideals that contain $P$ is linearly ordered and finite as $E$ is prime Noetherian. It follows that the maximal element of this chain is a maximal ideal in $E$. 
%\end{proof} 

The following proposition will be useful for proving the main results of this section. It characterizes the maximal ideals in a uniformly complete vector lattice with a strong unit among the prime ideals with finite co-dimension.

\begin{proposition}\label{uniformcomplete p=m}
Let $E$ be an Archimedean uniformly complete vector lattice with a strong unit. Then a prime ideal $P$ is a maximal ideal if and only if $\dim E/P<\infty$.
\end{proposition}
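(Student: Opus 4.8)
The statement to prove: in an Archimedean uniformly complete vector lattice $E$ with a strong unit, a prime ideal $P$ is maximal if and only if $\dim E/P < \infty$. One direction is essentially free: if $P$ is maximal then $\dim E/P = 1$ by \cite[Corollary p.66]{Schaefer} (as noted in the preliminaries), so certainly $\dim E/P < \infty$. The content is the converse, and the plan is to exploit the fact that a strong unit passes to the quotient, so that $E/P$ is again an Archimedean uniformly complete vector lattice with a strong unit, and crucially it is \emph{linearly ordered} because $P$ is prime. So I would reduce to the following claim: a finite-dimensional linearly ordered Archimedean (uniformly complete, with strong unit) vector lattice is one-dimensional; equivalently, the zero ideal in such a space is maximal, which then pulls back to $P$ being maximal in $E$.

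To prove that claim, first I would recall that in an Archimedean vector lattice a totally ordered structure forces $\dim \le 1$: if $L$ is linearly ordered and Archimedean and $x,y \in L$ are positive and nonzero, then since $L$ is totally ordered either $x \le y$ or $y \le x$; using the Archimedean property one shows $\{t > 0 : tx \le y\}$ behaves well and in fact any two positive elements are proportional. Concretely, suppose $\dim L \ge 2$ and pick positive $x$; since $L$ is Archimedean, for a positive $y$ not a multiple of $x$, consider $\lambda_0 := \sup\{\lambda \ge 0 : \lambda x \le y\}$, which is finite and positive because $L$ has a strong unit (or just because everything is comparable and $L$ is Archimedean). Then $y - \lambda_0 x$ is a positive element (by comparability, since $\lambda x \le y$ for all $\lambda < \lambda_0$ forces $\lambda_0 x \le y$) which is $\le \varepsilon x$ for every $\varepsilon > 0$, hence zero by Archimedeanity — so $y = \lambda_0 x$, contradicting the choice of $y$. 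Thus $L$ is spanned by any single positive element and $\dim L \le 1$; being nonzero (as $P \subsetneq E$), $\dim L = 1$.

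Putting it together: the quotient map $E \to E/P$ sends the strong unit of $E$ to a strong unit of $E/P$, so $E/P$ is Archimedean (quotients of Archimedean vector lattices by prime ideals are Archimedean — this is standard, and in any case $E/P$ linearly ordered with a strong unit is automatically Archimedean since it embeds order-densely into $\mathbb R$) and linearly ordered by primeness of $P$ \cite[Theorem~33.2]{Zaanen}. If $\dim E/P < \infty$, the claim above gives $\dim E/P = 1$, i.e. $P$ has co-dimension one in $E$, hence $P$ is maximal as noted in the preliminaries.

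The main obstacle — really the only subtlety — is justifying that $\lambda_0 := \sup\{\lambda \ge 0 : \lambda x \le y\}$ is finite and that $\lambda_0 x \le y$. Finiteness is where the strong unit (or at least comparability plus Archimedeanity) is used: in a linearly ordered Archimedean space, if $\{\lambda : \lambda x \le y\}$ were unbounded then $nx \le y$ for all $n$, forcing $x = 0$. And $\lambda_0 x \le y$ holds because otherwise, by total order, $y \le \lambda_0 x$ with $y \ne \lambda_0 x$, and then $y < \mu x$ for some $\mu < \lambda_0$, contradicting the definition of the supremum via $\mu x \le y$. Once this is in place the rest is routine. (Alternatively, one can bypass the explicit supremum argument entirely by invoking \Cref{Theorem: principal prime is maximal}: $E/P$ is Archimedean, uniformly complete, and has a strong unit, and its zero ideal is a principal prime ideal, hence maximal by part $(i)$; but the finite-dimensionality hypothesis then isn't even needed, so presumably the intended proof is the elementary linear-order argument, with the finite-dimensional hypothesis making the conclusion $\dim E/P = 1$ immediate from \Cref{P: Noetherian lattice} applied to the linearly ordered quotient.)
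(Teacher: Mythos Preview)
Your argument has a genuine gap: you assume that $E/P$ is Archimedean, but neither of your justifications holds. The claim ``quotients of Archimedean vector lattices by prime ideals are Archimedean'' is false in general---indeed the paper's own \Cref{L: phi_L and phi_R are hom's} exhibits $\mathrm{PPol}^n([a,b])/L_{t_0}\cong\mathbb{R}^{n+1}$ with the lexicographic order, which is not Archimedean. Your fallback (``linearly ordered with a strong unit embeds into $\mathbb{R}$'') is circular: H\"older-type embedding theorems require the Archimedean property as a hypothesis, not as a conclusion. The lexicographic plane $\mathbb{R}^2$ is linearly ordered, has the strong unit $(1,0)$, and is two-dimensional, so your supremum argument (which is correct \emph{once} Archimedeanity is in hand) simply does not apply to it. The entire force of the proposition is precisely that such a quotient cannot arise from a \emph{uniformly complete} $E$, and this is where the work lies; your proposed alternative via \Cref{Theorem: principal prime is maximal} applied to the zero ideal of $E/P$ founders on the same point, since that theorem also requires the ambient lattice to be Archimedean.

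The paper's proof takes a completely different route that genuinely uses uniform completeness: via Kakutani it reduces to $E=C(K)$, takes a maximal chain from $P$ up to the unique $M_x$ containing it, and shows that the penultimate ideal $P_{n-1}$ is a maximal \emph{algebra} ideal in $M_x\cong C_0(K\setminus\{x\})$, hence of the form $\{f:f(x)=f(y)=0\}$ for some $y\neq x$. Urysohn's lemma then produces disjoint functions witnessing that $P_{n-1}$ is not prime, forcing $P=M_x$. The algebra structure and the concrete function-space representation are doing the heavy lifting here, and there does not appear to be a shortcut through general vector-lattice facts alone.
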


\begin{proof}
If $\dim E=1$, there is nothing to prove. So we may assume that the dimension of $E$ is at least two. Furthermore, since $E$ is uniformly complete with a strong unit, by the Kakutani representation theorem we may assume that $E=C(K)$ for some compact Hausdorff space $K$ where $K$ contains at least two points. 

Pick a prime ideal $P$ in $C(K)$ such that $\dim C(K)/P<\infty$. Then there exists an $x\in K$ such that $P$ is contained in the maximal ideal $M_x$. Let
\[
P=P_1\subseteq P_2 \subseteq \ldots \subseteq P_{n-1} \subseteq P_n=M_x
\]
be the maximal chain $\mathcal C$ of ideals, that are necessarily prime, in $E$ between $P$ and $M_x$. Since $\mathcal C$ is maximal, the ideal $P_{n-1}$ is a maximal order ideal in $P_n$.
 
We claim that $P_{n-1}$ is a maximal algebra ideal in $P_n$.  To see that $P_{n-1}$ is also an algebra ideal in $M_x$, pick $f\in P_{n-1}$ and $g\in M_x$ and note that the inequality 
$|fg|\leq \|g\|_\infty\,|f|$ together with the fact that $P_{n-1}$ is an order ideal in $M_x$ yields that $fg\in P_{n-1}$. Since $P_{n-1}$ is a maximal order ideal in $P_n$, the co-dimension of $P_{n-1}$ in $P_n$ is one, so that $P_{n-1}$ is also maximal as an algebra ideal in $P_n$. 

We claim that there exists $y\in K\setminus\{x\}$ such that 
$$P_{n-1}=\{f\in C(K):\; f(x)=f(y)=0\}.$$
To see this, consider the locally compact Hausdorff space $K\setminus\{x\}$. It is a standard fact from general topology that the one-point compactification of $K\setminus\{x\}$ is homeomorphic to $K$ and that the mapping $\Phi\colon C_0(K\setminus\{x\})\to M_x\subseteq C(K)$ defined by 
\[
\Phi(f)(t):=
\begin{cases}
f(t) &\mbox{if}\ t\neq x,\\
\hskip .2cm 0 & \mbox{if}\ t=x,
\end{cases}
\]
is an isometric lattice and algebra isomorphism. Since $P_{n-1}$ is a maximal algebra ideal in $M_x$, it follows that $\Phi^{-1}(P_{n-1})$ is a maximal algebra ideal in $C_0(K\setminus\{x\})$. By \cite[Theorem~1.4.6]{Kaniuth} there exists $y\in K\setminus \{x\}$ such that 
\[
\Phi^{-1}(P_{n-1})=\{f\in C_0(K\setminus\{x\})\colon f(y)=0\},
\]
so that 
\[
P_{n-1}=\Phi(\Phi^{-1}(P_{n-1}))=\{\Phi(f)\colon f\in \Phi^{-1}(P_{n-1})\}=\{f\in C(K)\colon f(x)=f(y)=0\}.
\]
We claim that $P_{n-1}$ is not a prime ideal. To this end, note that Urysohn's lemma yields functions $f$ and $g$  in $C(K)$ such that $f(x)=g(y)=1$ and $f(y)=g(x)=0$. Then $f\wedge g$ belongs to the ideal $P_{n-1}$, yet neither $f$ nor $g$ belongs to $P_{n-1}$. This contradiction shows that $n=1$ so that $P=M_x$ is a maximal ideal. Since maximal ideals have co-dimension one, this concludes the proof.
\end{proof}

\begin{corollary}\label{C: prime noetherian p=m}
A uniformly complete Archimedean vector lattice $E$ with a strong unit is prime Noetherian if and only if every prime ideal in $E$ is maximal.  
\end{corollary}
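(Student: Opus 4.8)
Here is the plan.

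The direction $(\Leftarrow)$ is immediate: if every prime ideal in $E$ is maximal, then in any ascending chain $P_1\subseteq P_2\subseteq\cdots$ of prime ideals the consecutive terms are proper (a prime ideal is by definition proper) and maximal, hence equal, so the chain is constant. For $(\Rightarrow)$ I would argue by contraposition: assuming $E$ has a prime ideal $P$ that is \emph{not} maximal (equivalently, by \Cref{uniformcomplete p=m}, one with $\dim E/P=\infty$), I would construct an infinite strictly ascending chain of prime ideals of $E$. The chain will come from the "fractional powers'' $g^{1/2^k}$ of a suitable $g$, once we pass to the $f$-algebra $C(K)$; this is exactly where uniform completeness and the strong unit are used.

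\emph{Setup.} Since $E$ is Archimedean, uniformly complete and has a strong unit, the Kakutani representation theorem lets me assume $E=C(K)$ for a compact Hausdorff space $K$; in particular $E$ is a commutative unital $f$-algebra in which positive elements have square roots. Because $E$ has a strong unit and $P$ is proper, $P$ is contained in a maximal ideal, necessarily of the form $M_{x_0}$, and $P\subsetneq M_{x_0}$ as $P$ is not maximal. Since order ideals in $C(K)$ are algebra ideals (from $|fg|\le\|g\|_\infty|f|$), $P$ is an algebra ideal, so $F:=E/P$ is again an $f$-algebra, and it is linearly ordered because $P$ is prime. Write $\pi\colon E\to F$ for the quotient map, $\bar 1:=\pi(\mathbf 1)$, and $\bar M:=\pi(M_{x_0})$, a proper ideal of $F$ (of codimension one), so that $\bar 1\notin\bar M$.

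\emph{The chain.} Pick $g\in M_{x_0}^+\setminus P$ and rescale so that $\|g\|_\infty=1$; thus $0\le g\le 1$, $g(x_0)=0$, and $g\notin P$. Put $u_k:=g^{1/2^k}$, so $0\le u_k\le 1$, $u_{k+1}^2=u_k$, $u_k(x_0)=0$, and $u_k\notin P$ (if $u_k\in P$ then $g=u_k^{2^k}\in P$, as $P$ is a ring ideal). From $u_k=u_{k+1}^2\le u_{k+1}$ we get $\pi(u_k)\le\pi(u_{k+1})$ in $\bar M$, hence $I_{\pi(u_k)}\subseteq I_{\pi(u_{k+1})}$. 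The plan is to prove each of these inclusions is \emph{strict}; then $\pi^{-1}(I_{\pi(u_0)})\subsetneq\pi^{-1}(I_{\pi(u_1)})\subsetneq\cdots$ is a strictly ascending chain of ideals of $E$ each containing the prime ideal $P$, hence of prime ideals, contradicting prime Noetherianity.

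\emph{Strictness — the main obstacle.} This is the only nontrivial point, and it is where the $f$-algebra structure of $C(K)$ is essential. Suppose $I_{\pi(u_k)}=I_{\pi(u_{k+1})}$ for some $k$. Then $h:=\pi(u_{k+1})$ satisfies $h\le\lambda h^2$ for some $\lambda>0$, with $h\ne 0$ and $h\in\bar M$; rewrite this as $h(\bar 1-\lambda h)\le 0$ and split $\bar 1-\lambda h=w-w'$ into its positive and negative parts, so $ww'=0$ and, since $F$ is an $f$-algebra and $h\ge 0$, $hw\perp hw'$. The inequality $hw\le hw'$ between disjoint positive elements forces $hw=0$; hence $w=(\bar 1-\lambda h)w=(w-w')w=w^2$, so $w$ is an idempotent with $0\le w\le\bar 1$. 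As $F$ is linearly ordered, $w$ and $\bar 1-w$ are comparable: if $w\le\bar 1-w$ then $w^2\le w(\bar 1-w)=w-w^2$ gives $w=0$; if $\bar 1-w\le w$ then $(\bar 1-w)^2\le(\bar 1-w)w=w-w^2=0$ while $(\bar 1-w)^2=\bar 1-w$, so $w=\bar 1$. But $w=0$ means $\bar 1\le\lambda h$, i.e.\ $\bar 1\in I_h\subseteq\bar M$, which is impossible; and $w=\bar 1$ means $(\bar 1-\lambda h)^+=\bar 1$, forcing $\lambda h=0$ and hence $h=0$, again impossible. This contradiction shows every inclusion $I_{\pi(u_k)}\subseteq I_{\pi(u_{k+1})}$ is strict, completing the argument.
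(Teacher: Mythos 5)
Your proposal is correct, but it takes a genuinely different route from the paper. The paper's proof of this corollary is short because the real work is delegated to \Cref{uniformcomplete p=m}: there it is shown (via the representation $E\cong C(K)$, the identification of maximal algebra ideals in $C_0(K\setminus\{x\})$ and Urysohn's lemma) that a prime ideal of finite co-dimension is maximal, and the corollary then follows by iterating --- if some prime $P$ had infinite-dimensional quotient, one picks non-maximal primes above it repeatedly to violate prime Noetherianity. You bypass \Cref{uniformcomplete p=m} entirely (your parenthetical appeal to it is never used) and instead, from a single non-maximal prime $P\subsetneq M_{x_0}$ in $C(K)$, build an explicit infinite strictly ascending chain of primes: the iterated square roots $u_k=g^{1/2^k}$ of some $g\in M_{x_0}^+\setminus P$ give an increasing sequence of principal ideals $I_{\pi(u_k)}$ in the totally ordered quotient $E/P$, whose preimages are proper prime ideals of $E$, and strictness is forced by your idempotent computation in the quotient algebra. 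I checked the key steps and they hold: $u_k\notin P$ since $P$ is a ring ideal; the quotient multiplication is well defined and positive elements have positive products (check on representatives, using that $\pi$ is simultaneously a lattice and ring homomorphism), and the disjointness-preservation you invoke is in fact automatic here because $E/P$ is linearly ordered (so $w\wedge w'=0$ already forces $w=0$ or $w'=0$, which would let you shorten the argument); in the case $w=\bar 1$ the conclusion $\lambda h=0$ follows, e.g., from $w'=w-(\bar 1-\lambda h)=\lambda h$ and $ww'=0$. What each approach buys: the paper's \Cref{uniformcomplete p=m} is a standalone statement reused elsewhere in Section 6, whereas your argument is self-contained modulo Kakutani, avoids the Gelfand-type machinery, and actually yields a second proof of (the relevant direction of) \Cref{uniformcomplete p=m}, since your chain shows that every non-maximal prime in a uniformly complete vector lattice with a strong unit has infinite co-dimension.
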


\begin{proof}
If every prime ideal in $E$ is maximal, then $E$ is necessarily prime Noetherian. Suppose now that $E$ is a prime Noetherian vector lattice and pick a prime ideal $P$ in $E$. We will prove that $E/P$ is finite-dimensional since \Cref{uniformcomplete p=m} will yield then that $P$ is a maximal ideal in $E$. 

By way of contradiction, assume that $E/P$ is infinite-dimensional and let $P_1:=P$. Pick any non-maximal ideal $Q$ in $E$ that properly contains $P_1$. By \Cref{uniformcomplete p=m} we have that $E/Q$ is infinite-dimensional. Now let $P_2:=Q$. Inductively we can construct an ascending chain $P_1\subsetneq P_2\subsetneq \cdots$ of prime ideals in $E$ such that for each $n\in \mathbb N$ the dimension of $E/P_n$ is infinite. However, this contradicts the fact that $E$ is prime Noetherian. 
\end{proof}

The following proposition proves that the finite-dimensional vector lattices are precisely the uniformly complete Archimedean prime Noetherian vector lattices with a strong unit.

\begin{proposition}\label{P:C(K) prime Noetherian finte K}
Let $E$ be a uniformly complete Archimedean vector lattice with a strong unit. Then $E$ is prime Noetherian if and only if $E$ is finite-dimensional. 
\end{proposition}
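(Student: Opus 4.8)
The plan is to prove the two implications separately. The implication from finite-dimensionality to prime Noetherian is immediate: an Archimedean finite-dimensional vector lattice is Noetherian by \Cref{P: Noetherian lattice}, and a Noetherian vector lattice is \emph{a fortiori} prime Noetherian. So the whole content lies in the converse, which I would set up as follows. Assume $E$ is prime Noetherian; since $E$ is uniformly complete and has a strong unit, \Cref{C: prime noetherian p=m} applies and yields that \emph{every prime ideal of $E$ is maximal}. By the Kakutani representation theorem we may assume $E=C(K)$ for a compact Hausdorff space $K$, and it then suffices to prove that $K$ is finite (for then $E\cong\mathbb{R}^{|K|}$). Assuming $K$ infinite, the strategy is to exhibit a single non-maximal prime ideal of $C(K)$, which contradicts the conclusion of \Cref{C: prime noetherian p=m}.

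For the construction I would argue thus. Since $K$ is infinite, $C(K)$ is infinite-dimensional, so by \cite[Theorem~26.10]{Zaanen} it carries an infinite sequence $(e_n)_{n\in\mathbb{N}}$ of non-zero, positive, pairwise disjoint functions; the open sets $U_n:=\{x\in K: e_n(x)\neq 0\}$ are then pairwise disjoint and non-empty. Picking $y_n\in U_n$ gives an infinite set $\{y_n:n\in\mathbb{N}\}$ that is discrete as a subspace of $K$ (each $U_n$ is a neighbourhood of $y_n$ missing every other $y_m$). A closed discrete subspace of a compact space is finite, so $\{y_n:n\in\mathbb{N}\}$ is not closed, and I fix $x_0\in\overline{\{y_n:n\in\mathbb{N}\}}\setminus\{y_n:n\in\mathbb{N}\}$. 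Using complete regularity of $K$, choose for each $n$ a function $g_n\in C(K)$ with $0\le g_n\le 1$, $g_n(y_n)=1$ and $g_n(x_0)=0$, and set $f:=\sum_{n\ge 1}2^{-n}g_n\in C(K)$. Then $f\ge 0$, $f(x_0)=0$ and $f(y_n)>0$ for all $n$, so $x_0\in\overline{\{y_n:n\in\mathbb{N}\}}\subseteq\overline{\{x\in K:f(x)>0\}}$; in particular $f$ does not vanish on any neighbourhood of $x_0$, and hence $\{x\in K:f(x)>0\}$ is not closed.

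Next I would extract the non-maximal prime. The previous observation forces $\sqrt{f}\notin I_f$: if $\sqrt{f}\le\lambda f$ held pointwise for some $\lambda>0$, then $\{x:f(x)>0\}=\{x:f(x)\ge\lambda^{-2}\}$ would be closed. By \cite[Theorem~33.4]{Zaanen} there is a prime ideal $P$ with $I_f\subseteq P$ and $\sqrt{f}\notin P$. I claim $P$ is not maximal: a maximal ideal of $C(K)$ is of the form $M_y=\{h:h(y)=0\}$ for some $y\in K$, and $I_f\subseteq M_y$ forces $f(y)=0$, hence $\sqrt f(y)=0$ and $\sqrt f\in M_y$, contradicting $\sqrt f\notin P$. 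So $P$ is a non-maximal prime ideal of $E$, contradicting the consequence of \Cref{C: prime noetherian p=m}. Therefore $K$ is finite and $E$ is finite-dimensional.

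The main obstacle I expect is the geometric input: producing a continuous function whose zero set has empty interior at one of its points (equivalently, whose cozero set is ``genuinely non-closed''), which is what makes the principal ideal $I_f$ strictly smaller than the prime it must be swallowed into. The subtlety worth flagging is that it is \emph{not} enough to take a non-isolated point of $K$ — at a P-point every continuous function vanishing there already vanishes on a neighbourhood — which is precisely why the argument is routed through an infinite pairwise disjoint sequence (equivalently, an infinite discrete subspace, whose existence in any infinite compact Hausdorff space is elementary). Once $f$ and the witness $\sqrt f$ are in hand, the remaining steps are the routine prime-avoidance result \cite[Theorem~33.4]{Zaanen} and the standard description of maximal ideals of $C(K)$.
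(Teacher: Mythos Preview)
Your argument is correct. Both you and the paper begin by invoking \Cref{C: prime noetherian p=m} to deduce that every prime ideal of $E$ is maximal, but then diverge. The paper takes an abstract route: once every prime ideal is maximal, \cite[Theorem~37.6]{Zaanen} gives that every quotient $E/J$ is Archimedean, and since $E$ is uniformly complete, \cite[Theorem~61.4]{Zaanen} then forces $E\cong c_{00}(\Omega)$; the strong unit makes $\Omega$ finite. You instead work inside the Kakutani representation $E=C(K)$ and, assuming $K$ infinite, explicitly manufacture a non-maximal prime: from an infinite discrete subset and its accumulation point $x_0$ you build $f\ge 0$ with $f(x_0)=0$ whose cozero set is not closed, so that $\sqrt{f}\notin I_f$, and then the prime separating $I_f$ from $\sqrt{f}$ via \cite[Theorem~33.4]{Zaanen} cannot be any $M_y$. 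Your approach is more hands-on and avoids the structure theorems 37.6 and 61.4 of Zaanen, at the cost of a short topological construction (whose necessity you correctly flag with the P-point remark); the paper's approach is shorter but imports heavier machinery. Your square-root construction is also in the spirit of the paper's proof of \Cref{Theorem: principal prime is maximal}, so it fits naturally here.
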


\begin{proof}
By \Cref{C: prime noetherian p=m} every prime ideal in $E$ is a maximal ideal, so that by \cite[Theorem 37.6]{Zaanen} the quotient vector lattice $E/J$ is Archimedean for every ideal $J$ in $E$. Since $E$ is uniformly complete, \cite[Theorem 61.4]{Zaanen} yields that $E$ is lattice isomorphic to the vector lattice $c_{00}(\Omega)$ for some set $\Omega$. Since $E$ has a strong unit, $\Omega$ needs to be finite. Hence, the vector lattice $E$ is finite-dimensional. 

The converse follows from the fact that finite-dimensional vector lattices have only finitely many ideals.
\end{proof}

In general, the uniformly complete prime Noetherian vector lattices are characterized as $c_{00}(\Omega)$ for some set $\Omega$.

\begin{theorem}\label{T:prime Noetherian uniformly complete}
A uniformly complete Archimedean vector lattice $E$ is prime Noetherian if and only if it is lattice isomorphic to $c_{00}(\Omega)$ for some set $\Omega$.
\end{theorem}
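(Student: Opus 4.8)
The plan is to reduce the general case to the strong-unit case already handled in \Cref{P:C(K) prime Noetherian finte K}, using the band decomposition of $E$ into its atomic and atomless parts together with the structure theory for uniformly complete vector lattices. First I would dispose of the easy direction: if $E=c_{00}(\Omega)$, then by \Cref{T:maximal and prime ideals in atomic} every prime ideal is maximal (indeed minimal), so any ascending chain of proper prime ideals is constant and $E$ is prime Noetherian. For the forward direction, assume $E$ is uniformly complete, Archimedean, and prime Noetherian. The goal is to show $E$ is atomic with $E=A_0$, since then \cite[Theorem~61.4]{Zaanen} (combined with uniform completeness) gives $E\cong c_{00}(\Omega)$.

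The first key step is to show $E$ is atomic, i.e.\ that the atomless part $C=A^d$ is zero. Suppose $C\neq\{0\}$. Then $C$ is a nontrivial atomless band in $E$, and since $C$ is a band in a uniformly complete vector lattice it is itself uniformly complete and Archimedean. Pick any nonzero positive $x\in C$; the principal ideal $I_x$ inside $C$ is uniformly complete with strong unit $x$, hence lattice isometric to $C(K)$ for some compact Hausdorff $K$, and since $I_x$ is atomless (it is an ideal in the atomless band $C$) the space $K$ has no isolated points, so $K$ is infinite. By \Cref{P: sublattice prime Noetherian}, $I_x$ is prime Noetherian as a sublattice of $E$; but then by \Cref{P:C(K) prime Noetherian finte K} applied to $I_x\cong C(K)$, we get $K$ finite — a contradiction. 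Hence $C=\{0\}$ and $E$ is atomic, so $E=A_0^{dd}$ with $A_0$ the span of the atoms.

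The second key step is to upgrade ``atomic'' to ``$E=A_0$''. Here I would argue that if $A_0\subsetneq E$, then (as in the proof of \Cref{P: order dense prime ideals}(ii)) there is a proper prime ideal $P\supseteq A_0$, and one shows $E/P$ is infinite-dimensional: since $E$ is atomic and infinite-dimensional whenever $A_0\neq E$, an ascending chain of prime ideals of the type built in \Cref{P:Noetherian fin dim quotients} or \Cref{C: prime noetherian p=m} produces a non-stationary chain, contradicting the prime Noetherian hypothesis. More directly: by \Cref{P:Noetherian fin dim quotients} every prime ideal sits in one of finite co-dimension, and running the quotient argument of \Cref{C: prime noetherian p=m} (which only uses \Cref{uniformcomplete p=m}, valid in principal ideals $I_e$ which do have strong units) forces every prime ideal to be maximal; combined with atomicity and uniform completeness, \cite[Theorem~37.6]{Zaanen} and \cite[Theorem~61.4]{Zaanen} then give $E\cong c_{00}(\Omega)$ outright.

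The main obstacle I anticipate is the passage from local information (principal ideals $I_e\cong C(K_e)$ are finite-dimensional, forcing $K_e$ finite and each $I_e$ atomic) to the global conclusion $E=A_0$ rather than merely $E=A_0^{dd}$: a priori an atomic uniformly complete vector lattice could be something like $\ell^\infty$-ish rather than $c_{00}$, and one must use the prime Noetherian property (not just uniform completeness and atomicity) to exclude infinite-dimensional quotients. The cleanest route is probably to show directly that every prime ideal is maximal — reduce to a principal ideal $I_e$ containing a given prime $P\cap I_e$, apply \Cref{uniformcomplete p=m} and \Cref{C: prime noetherian p=m} there to conclude $E/P$ is finite-dimensional — and then invoke \cite[Theorem~37.6]{Zaanen} together with \cite[Theorem~61.4]{Zaanen} to land on $c_{00}(\Omega)$.
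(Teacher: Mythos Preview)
Your proposal contains the paper's proof, but buried inside an unnecessary detour. In your first step you fix $x\in C$, note that $I_x$ is uniformly complete with strong unit, invoke \Cref{P: sublattice prime Noetherian} to make it prime Noetherian, and then \Cref{P:C(K) prime Noetherian finte K} to force $I_x$ finite-dimensional. But nothing in that argument uses $x\in C$; it works verbatim for \emph{every} positive $x\in E$. Once every principal ideal is finite-dimensional, \cite[Theorem~61.4]{Zaanen} gives $E\cong c_{00}(\Omega)$ immediately---that theorem already packages the passage from ``all $I_x$ finite-dimensional'' to ``$E=A_0$'' that you are worrying about. This is exactly the paper's three-line proof.

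The atomic/atomless band decomposition and the entire second step are therefore superfluous, and that second step is also where your proposal becomes genuinely vague: the claim that $E/P$ is infinite-dimensional when $A_0\subsetneq P$ is asserted but not argued, and your plan to show ``every prime is maximal'' by working in principal ideals runs into the problem that $P\cap I_e$ maximal in $I_e$ does not obviously force $P$ maximal in $E$ without a strong unit in $E$ (which is precisely what you do not have). None of these issues is fatal---they can be patched---but they are all self-inflicted: the obstacle you identify (local $I_x$ information versus global $E=A_0$) is exactly what \cite[Theorem~61.4]{Zaanen} resolves, so apply your first-step argument to arbitrary $x$ and invoke it directly.
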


\begin{proof}
Suppose that $E$ is a prime Noetherian vector lattice.
Pick a positive vector $x\in E$ and consider the principal ideal $I_x$ which is a uniformly complete vector lattice with a strong unit. Since $I_x$ is prime Noetherian by Proposition~\ref{P: sublattice prime Noetherian}, it follows that $I_x$ is finite-dimensional by Proposition~\ref{P:C(K) prime Noetherian finte K}. Hence, $E$ is lattice isomorphic to $c_{00}(\Omega)$ for some set $\Omega$ by \cite[Theorem 61.4]{Zaanen}. For the converse implication, note that $c_{00}(\Omega)$ is prime Noetherian by \Cref{T:maximal and prime ideals in atomic}. 
\end{proof}

If $E$ is equipped with a completely metrizable locally solid topology, then the prime Noetherian property implies that $E$ is finite-dimensional. 

\begin{lemma}\label{L:c00 is finite dimensional}
If $c_{00}(\Omega)$ is lattice isomorphic to a completely metrizable locally solid vector lattice $E$, then $\Omega$ is finite.
\end{lemma}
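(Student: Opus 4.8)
The plan is to reduce, by passing to a suitable quotient, to the case $\Omega=\mathbb{N}$ and then invoke the Baire category theorem. Assume for a contradiction that $\Omega$ is infinite, fix a countably infinite subset $\{\omega_n:n\in\mathbb{N}\}\subseteq\Omega$, and identify $E$ with $c_{00}(\Omega)$ via the given lattice isomorphism, writing $e_\omega:=\chi_{\{\omega\}}$ for the atoms. First I would note that each band projection $P_n\colon x\mapsto x(\omega_n)e_{\omega_n}$ onto the projection band $\mathbb{R}e_{\omega_n}$ is continuous: since $|P_nx|\le|x|$ for every $x\in E$, solidity of a base of $0$-neighbourhoods forces $P_n(U)\subseteq U$ for every solid $0$-neighbourhood $U$, so $P_n$ is continuous at $0$ and hence everywhere. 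Therefore each $M_n:=\ker P_n=\{x\in c_{00}(\Omega):x(\omega_n)=0\}$ is a closed ideal, and so is the intersection
\[
J:=\bigcap_{n\in\mathbb{N}}M_n=c_{00}\bigl(\Omega\setminus\{\omega_n:n\in\mathbb{N}\}\bigr).
\]

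Next I would pass to the quotient $E/J$ with the quotient topology. Since $J$ is a closed ideal, $E/J$ is a Hausdorff locally solid vector lattice (the images under the quotient map of a solid base at $0$ form a solid base at $0$), and it is again completely metrizable, because the quotient of a metrizable topological vector space by a closed subspace is metrizable and the quotient of a complete metrizable topological vector space by a closed subspace is complete. On the other hand, restriction of functions to $\{\omega_n:n\in\mathbb{N}\}$ defines a surjective Riesz homomorphism $c_{00}(\Omega)\to c_{00}(\mathbb{N})$ whose kernel is exactly $J$, so $E/J$ is lattice isomorphic to $c_{00}(\mathbb{N})$. Replacing $E$ by $E/J$, we may therefore assume from the outset that $\Omega=\mathbb{N}$.

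Finally, a completely metrizable topological vector space is a Baire space. Setting $V_n:=\operatorname{span}\{e_1,\dots,e_n\}$, each $V_n$ is finite-dimensional, hence a proper closed subspace of the infinite-dimensional space $c_{00}(\mathbb{N})$, hence nowhere dense; but $c_{00}(\mathbb{N})=\bigcup_{n\in\mathbb{N}}V_n$, which contradicts the Baire category theorem. This contradiction shows that $\Omega$ must be finite. The step that requires the most care is the permanence of complete metrizability under the quotient by the closed ideal $J$; the continuity of the band projections and the final Baire argument are routine.
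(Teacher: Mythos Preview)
Your argument is correct and takes a genuinely different route from the paper's. The paper transports the topology to $c_{00}(\Omega)$, picks a countable base $\{U_n\}$ of solid $0$-neighbourhoods with $U_{n+1}+U_{n+1}\subseteq U_n$, chooses scalars $\lambda_n>0$ with $\lambda_n\chi_{\omega_n}\in U_n$, and shows directly that the increasing partial sums $s_n=\sum_{k=1}^n\lambda_k\chi_{\omega_k}$ form a Cauchy sequence; completeness then produces a limit that, pulled back to $c_{00}(\Omega)$, would have to dominate every $\lambda_n\chi_{\omega_n}$ and hence have infinite support, a contradiction. Your proof instead reduces to $\Omega=\mathbb{N}$ by passing to the quotient by the closed ideal $J$ and then invokes the Baire category theorem on the exhaustion $c_{00}(\mathbb{N})=\bigcup_n\Span\{e_1,\dots,e_n\}$ by nowhere dense closed subspaces.

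The paper's approach is more elementary and self-contained: it uses nothing beyond the definition of a Cauchy sequence in a metrizable topological vector space and avoids both the quotient machinery and Baire. Your approach is more conceptual and makes the role of completeness transparent via Baire, at the cost of the permanence results you flag (complete metrizability and local solidity pass to quotients by closed ideals --- both standard, but not entirely trivial). One minor simplification of your reduction: instead of quotienting by $J=c_{00}(\Omega\setminus\{\omega_n:n\in\mathbb{N}\})$, you could restrict to the band $J^d=c_{00}(\{\omega_n:n\in\mathbb{N}\})$, which is closed (disjoint complements are closed in any Hausdorff locally solid vector lattice since the lattice operations are continuous) and inherits complete metrizability directly as a closed subspace; this sidesteps the quotient-completeness step you identified as the most delicate.
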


\begin{proof}
Let $\Phi\colon c_{00}(\Omega) \to (E,\tau)$ be a lattice isomorphism where $\tau$ is a completely metrizable locally solid topology on $E$. Then $\Phi^{-1}$ induces a completely metrizable locally solid topology $\tau'$ on $c_{00}(\Omega)$. Hence, $\Phi$ is an isomorphism between completely metrizable locally solid vector lattices. 

Suppose that $\Omega$ is infinite and let $(\omega_n)_{n\in\mathbb N}$ be a sequence in $\Omega$ of distinct points. Denote by $x_n$ the vector $\Phi(\chi_{\omega_n})$ in $E$. Since $\tau$ is metrizable, there exists a local basis $\{U_n\colon n\in \mathbb N\}$ of solid neighborhoods of zero in $E$  with the property that $U_{n+1}+U_{n+1}\subseteq U_n$ for each $n\in\mathbb N$. Since each set $U_n$ is absorbing, there exists a $\lambda_n>0$ such that $\lambda_n x_n\in U_n$. Denote the vector $\lambda_1x_1+\cdots+\lambda_nx_n$ by $s_n$. 

We claim that the sequence $(s_n)_{n\in\mathbb N}$ is a Cauchy sequence in $(E,\tau)$. 
Pick any neighborhood $U$ of zero in $(E,\tau)$ and find $n_0\in\mathbb N$ such that
$U_{n_0}\subseteq U$. By \cite[Exercise 2.1.14]{Tourky07} for all $m'\geq m>n_0$ we have 
$$s_{m'}-s_m=\lambda_{m+1}x_{m+1}+\cdots+\lambda_{m'}x_{m'}\in U_{m+1}+\cdots+U_{m'}\subseteq U_m\subseteq U_{n_0}\subseteq U$$
which proves the claim. Hence, the increasing sequence $(s_n)_{n\in\mathbb N}$ converges to some positive vector  $s\in E$. Since $\Phi$ is an isomorphism, the increasing sequence $(t_n)_{n\in\mathbb N}$ where $t_n=\lambda_1\chi_{\omega_1}+\cdots+\lambda_n\chi_{\omega_n}$ is convergent to the vector $\Phi^{-1}(s)$. However, the inequality $\Phi^{-1}(s)\geq \lambda_n\chi_{\omega_n}$ which holds for each $n\in\mathbb N$ yields that $\Phi^{-1}(s)$ has infinite support. This clearly contradicts the definition of the space $c_{00}(\Omega)$. Therefore, $\Omega$ is finite and the proof is completed.
\end{proof}

\begin{proposition}\label{C:Banach lattice prime Noetherian}
A completely metrizable locally solid vector lattice is prime Noetherian if and only if it is finite-dimensional. In particular, a Banach lattice is prime Noetherian if and only if it is finite-dimensional. 
\end{proposition}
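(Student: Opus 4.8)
The plan is to reduce the nontrivial implication to \Cref{T:prime Noetherian uniformly complete} together with \Cref{L:c00 is finite dimensional}. The converse implication, that a finite-dimensional vector lattice is prime Noetherian, is immediate: a finite-dimensional vector lattice has only finitely many ideals, so every ascending chain of prime ideals is stationary. The ``in particular'' assertion about Banach lattices then follows at once from the general statement, since the norm topology of a Banach lattice is complete, metrizable, and locally solid (the norm being a lattice norm, every open ball is solid), so a Banach lattice is a completely metrizable locally solid vector lattice.

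So let $(E,\tau)$ be a completely metrizable locally solid vector lattice that is prime Noetherian; the goal is to show that $E$ is finite-dimensional. First I would record two standard facts about Hausdorff locally solid Riesz spaces that will be used throughout: such a space is Archimedean, and its positive cone is $\tau$-closed. For the Archimedean property: if $0\le nx\le y$ for all $n\in\N$, then $\tfrac1n y\to 0$ in $\tau$ by continuity of scalar multiplication, and since $\tau$ has a base of solid neighbourhoods of zero this forces $x$ to lie in every such neighbourhood, hence $x=0$ by the Hausdorff property. In particular $E$ is Archimedean, so the results of the preceding sections apply to it.

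The main step is to upgrade metric completeness of $(E,\tau)$ to uniform completeness of the vector lattice $E$, i.e. to show that $(I_x,\|\cdot\|_x)$ is a Banach lattice for every positive $x\in E$; recall from the preliminaries that this is equivalent to $E$ being (relatively) uniformly complete. Given a $\|\cdot\|_x$-Cauchy sequence $(x_n)$ in $I_x$, its defining estimates $|x_n-x_m|\le\varepsilon x$ together with $\varepsilon x\to 0$ in $\tau$ and solidness of a neighbourhood base of zero show that $(x_n)$ is $\tau$-Cauchy, hence $\tau$-convergent to some $x^\ast\in E$ by completeness. Fixing $\varepsilon>0$ and an index $n$ beyond the corresponding Cauchy threshold, one has $-\varepsilon x\le x_n-x_m\le\varepsilon x$ for all large $m$; letting $m\to\infty$ and using that the order interval $[x_n-\varepsilon x,\,x_n+\varepsilon x]$ is $\tau$-closed (because the positive cone is $\tau$-closed) gives $|x_n-x^\ast|\le\varepsilon x$. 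Hence $x^\ast\in I_x$ and $\|x_n-x^\ast\|_x\le\varepsilon$, so $(I_x,\|\cdot\|_x)$ is complete. Thus $E$ is a uniformly complete Archimedean vector lattice, and \Cref{T:prime Noetherian uniformly complete} provides a set $\Omega$ with $E$ lattice isomorphic to $c_{00}(\Omega)$. Finally, since $E$ carries the completely metrizable locally solid topology $\tau$, \Cref{L:c00 is finite dimensional} forces $\Omega$ to be finite, so $E$ is finite-dimensional.

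I expect the uniform completeness step to be the main obstacle: it requires the two background facts about Hausdorff locally solid Riesz spaces (Archimedean property and closedness of the positive cone) to pass from $\tau$-limits to relative uniform limits, and one must check carefully that the $\tau$-limit of a $\|\cdot\|_x$-Cauchy sequence genuinely lands back in the principal ideal $I_x$ with the correct norm estimate. Once uniform completeness is established, the remainder of the argument is a direct appeal to results already proved.
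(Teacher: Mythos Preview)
Your proposal is correct and follows exactly the route the paper takes: establish that a completely metrizable locally solid vector lattice is uniformly complete, invoke \Cref{T:prime Noetherian uniformly complete} to get $E\cong c_{00}(\Omega)$, and then apply \Cref{L:c00 is finite dimensional}. The only difference is that the paper states ``Since $E$ is uniformly complete'' as a known fact without further argument, whereas you spell out the proof via closedness of order intervals and the Archimedean property; your added detail is correct and the Hausdorff assumption you use is automatic from metrizability.
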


\begin{proof}
Let $E$ be a completely metrizable locally solid vector lattice with the prime Noetherian property. Since $E$ is uniformly complete, it is lattice isomorphic to $c_{00}(\Omega)$ by Theorem~\ref{T:prime Noetherian uniformly complete}. By Lemma~\ref{L:c00 is finite dimensional} we conclude that $E$ is finite-dimensional. 
\end{proof}

By \Cref{C:Banach lattice prime Noetherian} we can construct an example of a vector lattice $E$ that contains a prime Noetherian sublattice as an order dense ideal, however $E$ itself is not prime Noetherian. Indeed, the vector lattice $c_0$ contains $c_{00}$ as an order dense ideal. 

\begin{corollary}\label{C:prime Noetherian char uc}
The following assertions are equivalent for a uniformly complete prime Noetherian Archimedean vector lattice $E$.
\begin{enumerate}
    \item[$(i)$] $E$ has a strong unit.
    \item[$(ii)$] $E$ is lattice isomorphic to a Banach lattice.
    \item[$(iii)$] $E$ is finite-dimensional. 
\end{enumerate}
\end{corollary}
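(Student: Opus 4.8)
The plan is to assemble the equivalence from results already established in this section by running through the implications $(iii)\Rightarrow(i)$, $(iii)\Rightarrow(ii)$, $(i)\Rightarrow(iii)$, and $(ii)\Rightarrow(iii)$, after which all three statements are equivalent.

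First I would dispatch $(iii)\Rightarrow(i)$ and $(iii)\Rightarrow(ii)$, which require no new input. A finite-dimensional vector lattice has a strong unit, as already noted in the proof of \Cref{T: Cohen}, and it is lattice isomorphic to $\mathbb{R}^n$ with the coordinatewise order, which carries the complete supremum norm and is thus a Banach lattice. Next, $(i)\Rightarrow(iii)$ is precisely \Cref{P:C(K) prime Noetherian finte K}: since $E$ is a uniformly complete Archimedean vector lattice with a strong unit and is prime Noetherian, that proposition forces $E$ to be finite-dimensional.

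The remaining implication $(ii)\Rightarrow(iii)$ is where the substance lies, and I would obtain it by combining \Cref{T:prime Noetherian uniformly complete} with \Cref{L:c00 is finite dimensional}. Since $E$ is uniformly complete, Archimedean, and prime Noetherian, \Cref{T:prime Noetherian uniformly complete} gives a lattice isomorphism $E\cong c_{00}(\Omega)$ for some set $\Omega$. If in addition $E$ is lattice isomorphic to a Banach lattice, then $c_{00}(\Omega)$ is lattice isomorphic to a Banach lattice, which in particular is a completely metrizable locally solid vector lattice; \Cref{L:c00 is finite dimensional} then forces $\Omega$ to be finite, so $E$ is finite-dimensional. (One could equivalently cite \Cref{C:Banach lattice prime Noetherian} here, since a Banach lattice that is prime Noetherian is finite-dimensional, and being lattice isomorphic to a Banach lattice transports the prime Noetherian property through the isomorphism.)

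The only real obstacle is the step $(ii)\Rightarrow(iii)$, and even there the work has already been carried out in \Cref{T:prime Noetherian uniformly complete} and \Cref{L:c00 is finite dimensional}; the corollary itself is essentially bookkeeping, so I do not anticipate any difficulties beyond citing the correct earlier results in the right order.
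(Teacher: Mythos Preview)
Your proposal is correct and matches the paper's intended reasoning: the corollary is stated without proof precisely because it is immediate from \Cref{P:C(K) prime Noetherian finte K} for $(i)\Leftrightarrow(iii)$ and from \Cref{C:Banach lattice prime Noetherian} (equivalently \Cref{T:prime Noetherian uniformly complete} together with \Cref{L:c00 is finite dimensional}) for $(ii)\Leftrightarrow(iii)$, with the implications out of $(iii)$ being trivial. There is nothing to add.
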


The following example shows that there exists an infinite-dimensional prime Noetherian vector lattice with a strong unit which is not uniformly complete. 

\begin{example}
The vector lattice $E:=c_{00}+\mathbb{R}\mathbf{1}$ of all eventually constant sequences is not uniformly complete, and every quotient space of $E$ is Archimedean by \cite[Exercise~61.5]{Zaanen} and therefore, every proper prime ideal in $E$ is a maximal ideal by \cite[Theorem~37.6]{Zaanen}. Hence, $E$ is an infinite-dimensional atomic prime Noetherian vector lattice with a strong unit.
\end{example}

\section{Vector lattices of piecewise polynomials}\label{S: Wild Examples}

In this section the prime ideals in vector lattices of piecewise polynomials are studied. It turns out that this class of non-uniformly complete vector lattices is a source of insightful examples when studying principal prime ideals and prime Noetherian properties of vector lattices. 

For $n\in\mathbb{N}$ let $\mathrm{PPol}^n([a,b])$ be the vector lattice of piecewise polynomials of degree at most $n$ that are continuous on the interval $[a,b]$, and we shall denote the space of piecewise polynomials that are continuous on the interval $[a,b]$ without any bound on the degree by $\mathrm{PPol}([a,b])$. By the lattice version of the Stone-Weierstrass theorem all these spaces are uniformly dense in $C([a,b])$. For the system of (not necessarily open) neighborhoods $N(t_0)$ of $t_0\in [a,b]$, we define the ideal
\[
I_{N(t_0)}:=\left\{f\in E\colon f^{-1}(\{0\})\in N(t_0)\right\}.
\]
Note that for $t_0\in(a,b)$ the ideal $I_{N(t_0)}$ is not prime. To see this, consider the functions $f(t):=(t-t_0)_+$ and $g(t):=(t_0-t)_+$. Then $f\wedge g=0$, but neither $f$ nor $g$ are in $I_{N(t_0)}$. 
Furthermore, if we write $E$ for either $\mathrm{PPol}^n([a,b])$ or $\mathrm{PPol}([a,b])$, then it follows from Lemma~\ref{L:lemma about maximals} that all maximal ideals in $E$ are of the form $M_{t_0}^E$ for some $t_0\in[a,b]$. For $t_0\in (a,b]$ we define 
\[
L_{t_0}:=\left\{f\in M_{t_0}^E\colon \mbox{there exists a $\delta>0$ such that $f(t)=0$ for $t\in(t_0-\delta,t_0]$}\right\}
\]
and for $t_0\in[a,b)$ we define
\[
R_{t_0}:=\left\{f\in M_{t_0}^E\colon \mbox{there exists a $\delta>0$ such that $f(t)=0$ for $t\in[t_0,t_0+\delta)$}\right\}.
\]
In fact, these are exactly the minimal prime ideals in $E$.

\begin{lemma}\label{L:minimal prime in poly}
Let $E$ be either $\mathrm{PPol}^n([a,b])$ or $\mathrm{PPol}([a,b])$. The minimal prime ideals in $E$ are precisely $L_{t_0}$ for $t_0\in(a,b]$ and $R_{t_0}$ for $t_0\in[a,b)$.
\end{lemma}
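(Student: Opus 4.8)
The plan is to first verify that each $L_{t_0}$ and $R_{t_0}$ is indeed a prime ideal, then show it is minimal, and finally show that every minimal prime ideal in $E$ is of this form. For primeness of $L_{t_0}$ (the argument for $R_{t_0}$ being symmetric via $t\mapsto -t$), I would take $f,g\in E$ with $f\wedge g=0$ and argue on the germ of $f$ and $g$ to the left of $t_0$: since $f$ and $g$ are piecewise polynomial, on a small enough interval $(t_0-\delta,t_0]$ each is given by a single polynomial, and since $f\wedge g=0$ at least one of these polynomials vanishes on a set with a limit point at $t_0$, hence is identically zero near $t_0$; that function then lies in $L_{t_0}$. One also needs that $L_{t_0}$ is an ideal, which is immediate from the solidity of the vanishing condition, and that it is proper, since e.g. the constant function $\mathbf 1$ is not in it.

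For minimality, I would use the standard criterion (see \cite[Theorem~33.7]{Zaanen} or the characterization of minimal primes) that a prime ideal $P$ is minimal if and only if for each $f\in P$ there is $g\notin P$ with $f\wedge g=0$. Given $f\in L_{t_0}$, so $f$ vanishes on some $(t_0-\delta,t_0]$, I would produce $g\notin L_{t_0}$ supported in $(t_0-\delta,t_0]$ and disjoint from $f$: for instance a suitable piecewise-linear bump that is zero at $t_0$ but strictly positive somewhere in $(t_0-\delta/2,t_0)$, which lies in $M_{t_0}^E$ but not in $L_{t_0}$ since it does not vanish on any left-neighbourhood of $t_0$. Then $f\wedge g=0$ because the supports are essentially disjoint (where $g\neq 0$ we have $f=0$), establishing minimality. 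The same construction works in both $\mathrm{PPol}^n([a,b])$ and $\mathrm{PPol}([a,b])$ since piecewise-linear functions are available in both.

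For the converse, let $P$ be any minimal prime ideal in $E$. By \Cref{L:lemma about maximals}$(iii)$ there is a unique $t_0\in[a,b]$ with $P\subseteq M_{t_0}^E$, and since $P$ is minimal it suffices to show $P$ contains $L_{t_0}$ or $R_{t_0}$ (then equality follows from minimality of the latter, once we know $P$ is not the whole interval's worth of germs). If $t_0\in(a,b)$, I would take the functions $f(t)=(t_0-t)_+$ and $g(t)=(t-t_0)_+$ with $f\wedge g=0$; primeness of $P$ forces $f\in P$ or $g\in P$. In the first case I claim $P\supseteq R_{t_0}$: given any $h\in R_{t_0}$, so $h$ vanishes on $[t_0,t_0+\delta)$, one checks that $|h|\wedge$ (a large multiple of $f$) dominates $|h|$ — more carefully, $h$ is already dominated in absolute value by a multiple of $f$ near $t_0$ on the left and equals zero on the right, so a multiple of $f$ plus the restriction of $h$ to where $h$ is ``far'' from $t_0$ handles it; the cleanest route is to note $h\wedge g' = 0$ for the germ-to-the-right witness $g'$, hence by primeness $h\in P$. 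The boundary cases $t_0=a$ and $t_0=b$ are simpler since only one of $L_{t_0}$, $R_{t_0}$ is defined and $M_{t_0}^E$ itself must then be handled: here I would show directly that the minimal prime below $M_a^E$ is $R_a$ by the same disjointness trick. The main obstacle I anticipate is this last step — cleanly arguing that a minimal prime contained in $M_{t_0}^E$ must actually contain the full one-sided germ ideal rather than something strictly smaller — and the fix is to exploit that $E$ consists of piecewise polynomials, so the germ at $t_0$ from either side is determined by a single polynomial, making ``vanishing near $t_0$'' a genuinely rigid condition that primeness can pin down.
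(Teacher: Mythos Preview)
Your arguments for primeness and for minimality of $L_{t_0}$ and $R_{t_0}$ are correct. The minimality argument, via the criterion that a prime $P$ is minimal iff every $f\in P$ admits a disjoint $g\notin P$, is in fact more direct than the paper's route: the paper postpones minimality until after classifying all minimal primes, then argues that any minimal prime contained in $L_{t_0}$ must be some $L_s$ or $R_s$ with $s=t_0$ and rules out $R_{t_0}\subseteq L_{t_0}$.

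The gap is in the converse, exactly where you flag it. Suppose $P\subseteq M_{t_0}^E$ is a minimal prime with $t_0\in(a,b)$ and $f=(t_0-t)_+\in P$; you want $R_{t_0}\subseteq P$. Take $h\in R_{t_0}$ vanishing on $[t_0,t_0+\delta)$. Neither of your suggestions works: $|h|$ is not dominated by any multiple of $f$, since $f\equiv 0$ on $[t_0,b]$ while $h$ may be nonzero on $[t_0+\delta,b]$; and for any bump $g'$ supported in $[t_0,t_0+\delta)$ you do get $|h|\wedge g'=0$, but you have no way to conclude $g'\notin P$, because $g'(t_0)=0$ and all you know is $P\subseteq M_{t_0}^E$. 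The missing step, which the paper isolates explicitly, is that $I_{N(t_0)}\subseteq P$ for \emph{every} prime $P\subseteq M_{t_0}^E$: given positive $u\in I_{N(t_0)}$ vanishing on a neighbourhood $U$ of $t_0$, build a piecewise-linear bump $v$ supported in $U$ with $v(t_0)=1$; then $u\wedge v=0$ and $v\notin M_{t_0}^E\supseteq P$, so primeness forces $u\in P$. With this in hand the paper finishes cleanly: if $L_{t_0}\not\subseteq P$ and $R_{t_0}\not\subseteq P$, pick $u\in L_{t_0}\setminus P$ and $w\in R_{t_0}\setminus P$; then $|u|\wedge|w|\in L_{t_0}\cap R_{t_0}=I_{N(t_0)}\subseteq P$, contradicting primeness. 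The idea you were missing is that the disjoint witness must satisfy $v(t_0)\neq 0$ so that $v\notin M_{t_0}^E$ guarantees $v\notin P$.
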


\begin{proof}
It is straightforward to check that $L_{t_0}$ and $R_{t_0}$ are ideals. Suppose $f,g\in E$ are such that $f\wedge g=0$. Since $f$ and $g$ are piecewise polynomials and have only finitely many zeros when they are not constant, there must be a $\delta>0$ such that $f(t)=0$ on either $(t_0-\delta,t_0]$ or on $[t_0,t_0+\delta)$, or $g(t)=0$ on either $(t_0-\delta,t_0]$ or on $[t_0,t_0+\delta)$. Hence $f$ is in $L_{t_0}$ or in $R_{t_0}$, or $g$ is in $L_{t_0}$ or $R_{t_0}$, and if $f$ is not in $L_{t_0}$, then $g$ is in $L_{t_0}$. Similarly, if $g$ is not in $L_{t_0}$, then $f$ is in $L_{t_0}$. Thus $L_{t_0}$ is a prime ideal and it follows analogously that $R_{t_0}$ is a prime ideal. 

Next we will show that $I_{N(t_0)}$ is contained in every prime ideal in $M_{t_0}^E$. Indeed, if $P$ is a prime ideal in $M_{t_0}^E$ and $f\in I_{N(t_0)}$ is positive, then there is an $\epsilon>0$ such that $(t_0-\epsilon,t_0+\epsilon)$ is in $\{t\in[a,b]\colon f(t)=0\}$ and we can construct a piecewise linear continuous function $g$ that is zero outside $(t_0-\epsilon,t_0+\epsilon)$ and $g(t_0)=1$. Since $f\wedge g=0$, it follows that $f\in P$. 

We proceed to show that $L_{t_0}$ and $R_{t_0}$ are the minimal prime ideals in $E$. Suppose $P$ is a minimal prime ideal in $M_{t_0}^E$, and there is a function $f\in L_{t_0}$ that is not in $P$ and that there is a function $g\in R_{t_0}$ that is not in $P$. Then $f\wedge g$ is not in $P$, but $f\wedge g$ is in $I_{N(t_0)}$ contradicting the fact that $I_{N(t_0)}\subseteq P$. Hence, $L_{t_0}\subseteq P$ or $R_{t_0}\subseteq P$. Since $P$ is a minimal prime ideal, it follows that $P=L_{t_0}$ or $P=R_{t_0}$. Moreover, if $t_0\in(a,b]$, then the prime ideal $L_{t_0}$ contains a minimal prime ideal $P$. So, there is an $s\in[a,b]$ such that $P=L_s\subseteq L_{t_0}$ or $P=R_s\subseteq L_{t_0}$. Then by \Cref{L:lemma about maximals} it follows that $s=t_0$. If $R_{t_0}$ is contained in $L_{t_0}$, then we must have that $R_{t_0}=I_{N(t_0)}$, which is impossible. Hence, for all $t_0\in(a,b]$ the prime ideals $L_{t_0}$ are minimal. Similarly, for all $t_0\in[a,b)$ the prime ideals $R_{t_0}$ are minimal as well.
\end{proof}

For $t_0\in(a,b]$ any $f\in \mathrm{PPol}^n([a,b])$ has left derivatives at $t_0$, which we will denote by $f_-^{(j)}(t_0)$ for $0\le j\le n$ where it is understood that $f^{(0)}_-(t_0)=f(t_0)$. The left $j$-th derivatives of $f$ yield a map $\varphi_L\colon \mathrm{PPol}^n([a,b])\to \mathbb{R}^{n+1}$  defined by 
\begin{align}\label{formula Riesz hom left}
\phi_L(f):=(f(t_0),-f'_-(t_0),f_-^{''}(t_0),\ldots,(-1)^{n}f_-^{(n)}(t_0)).
\end{align}
Similarly for $t_0\in[a,b)$ and any $f\in \mathrm{PPol}^n([a,b])$ we have right derivatives $f_+^{(j)}(t_0)$ for $0\le j\le n$ which yield the map $\varphi_R\colon \mathrm{PPol}^n([a,b])\to \mathbb{R}^{n+1}$  defined by 
\begin{align}\label{formula Riesz hom right}
\phi_R(f):=(f(t_0),-f_+'(t_0),f_+^{''}(t_0),\ldots,(-1)^nf^{(n)}_+(t_0)), 
\end{align}
where we again put $f_+^{(0)}(t_0)=f(t_0)$. It follows that if we equip $\mathbb{R}^{n+1}$ with the lexicographical ordering, the maps $\varphi_L$ and $\varphi_R$ are Riesz homomorphisms.

\begin{lemma}\label{L: phi_L and phi_R are hom's}
Equip $\mathbb{R}^{n+1}$ with the lexicographical ordering. Then for $t_0\in(a,b]$ the map $\varphi_L$ as in \eqref{formula Riesz hom left} is a Riesz homomorphism with kernel $L_{t_0}$ and for $t_0\in[a,b)$ the map $\varphi_R$ as in \eqref{formula Riesz hom right} is a Riesz homomorphism with kernel $R_{t_0}$. 
\end{lemma}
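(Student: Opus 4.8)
The plan is to prove the statement for $\varphi_L$ in full; the case of $\varphi_R$ is entirely symmetric, with left derivatives and the one-sided expansion of $f$ to the left of $t_0$ replaced throughout by right derivatives and the corresponding expansion to the right. Fix $t_0\in(a,b]$ and write $E=\mathrm{PPol}^n([a,b])$. Linearity of $\varphi_L$ is immediate, since evaluation at $t_0$ and the formation of left derivatives at $t_0$ are linear operations, so only two things remain: to identify $\ker\varphi_L$, and to check that $\varphi_L$ is a lattice homomorphism. For the latter I would invoke the standard fact that a linear operator between vector lattices is a Riesz homomorphism precisely when it preserves finite infima, so it suffices to verify $\varphi_L(f\wedge g)=\varphi_L(f)\wedge\varphi_L(g)$ for all $f,g\in E$, the infimum on the right being taken in $\mathbb{R}^{n+1}$ with the lexicographic order.

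The key local observation is that, since $f$ is a piecewise polynomial of degree at most $n$, there is a $\delta>0$ and a single polynomial $p$ of degree at most $n$ with $f=p$ on $(t_0-\delta,t_0]$; Taylor expanding $p$ about $t_0$ and substituting $t=t_0-s$ gives
\[
f(t_0-s)=\sum_{j=0}^{n}\frac{(-1)^{j}f^{(j)}_-(t_0)}{j!}\,s^{j}=\sum_{j=0}^{n}\frac{\varphi_L(f)_j}{j!}\,s^{j}\qquad(0<s<\delta),
\]
where $\varphi_L(f)_j$ denotes the $j$-th coordinate of $\varphi_L(f)$, indexed from $0$. Thus the coordinates of $\varphi_L(f)$ are, up to multiplication by the positive constants $1/j!$, exactly the Taylor coefficients of $f$ regarded as a polynomial in $s=t_0-t\ge 0$. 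From this I would extract a \emph{sign lemma}: for every $f\in E$ the lexicographic sign of $\varphi_L(f)$ coincides with the sign of $f$ immediately to the left of $t_0$. Concretely, inspecting the first nonzero coefficient in the expansion above shows that exactly one of the following holds, and the three cases correspond to $\varphi_L(f)>0$, $\varphi_L(f)<0$ and $\varphi_L(f)=0$: either $f>0$ on some $(t_0-\delta',t_0)$, or $f<0$ on some $(t_0-\delta',t_0)$, or $f\equiv 0$ on $(t_0-\delta,t_0]$ — the last case using that a polynomial of degree at most $n$ with $n+1$ vanishing Taylor coefficients at $t_0$ is identically zero.

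With the sign lemma in hand the remaining steps are short. For the kernel, $\varphi_L(f)=0$ holds iff $f$ vanishes on a left neighbourhood of $t_0$; any such $f$ lies in $M_{t_0}^E$, so this says exactly that $\ker\varphi_L=L_{t_0}$. For the homomorphism property, given $f,g\in E$ choose $\delta>0$ small enough that $f$, $g$ (hence also $f\wedge g=\min(f,g)$) agree with degree $\le n$ polynomials on $(t_0-\delta,t_0]$, and apply the sign lemma to $f-g$. If $\varphi_L(f-g)=0$, then $f=g$ near $t_0$ on the left, so $f\wedge g=f$ there and $\varphi_L(f\wedge g)=\varphi_L(f)=\varphi_L(f)\wedge\varphi_L(g)$. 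If $\varphi_L(f-g)>0$, then $f>g$ on some $(t_0-\delta',t_0)$ and, by continuity, $f(t_0)\ge g(t_0)$, so $f\wedge g=g$ on the half-open interval $(t_0-\delta',t_0]$ and $\varphi_L(f\wedge g)=\varphi_L(g)=\varphi_L(f)\wedge\varphi_L(g)$ (the last equality because $\varphi_L(f)-\varphi_L(g)=\varphi_L(f-g)>0$ lexicographically). The case $\varphi_L(f-g)<0$ is symmetric. Hence $\varphi_L$ is a Riesz homomorphism with kernel $L_{t_0}$.

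I expect the only genuine subtlety to lie in the sign lemma: one must argue carefully that for a nonzero coefficient vector the sign of $\sum_j c_j s^j$ for small $s>0$ is governed by the first nonzero $c_j$, and — a point easy to overlook — one must keep track of the endpoint $t_0$ itself, since $\varphi_L$ records the value $f(t_0)$. This is precisely where the continuity remark that $f>g$ on a left interval forces $f(t_0)\ge g(t_0)$ is needed, so that $f\wedge g$ coincides with $g$ on the half-open interval $(t_0-\delta',t_0]$ and not merely on the open one. Everything else is routine bookkeeping with piecewise polynomials.
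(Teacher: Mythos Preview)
Your proof is correct and rests on the same core idea as the paper's: both arguments use the local Taylor expansion of $f$ on a one-sided neighbourhood of $t_0$ and the fact that the sign of $f$ immediately to the left of $t_0$ is governed by the first nonvanishing Taylor coefficient. The paper verifies the equivalent condition $\varphi_L(|f|)=|\varphi_L(f)|$ by a case split on the parity of the index $k$ of the first nonzero left derivative (handling $k$ even and $k$ odd separately via the factorisations \eqref{equation for principal 1}--\eqref{equation for principal 2}), whereas your substitution $s=t_0-t$ absorbs the alternating signs $(-1)^j$ into the coefficients and makes the correspondence between the lexicographic sign of $\varphi_L(f)$ and the sign of $s\mapsto f(t_0-s)$ for small $s>0$ immediate, eliminating that parity analysis. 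Your packaging via the sign lemma applied to $f-g$, together with the explicit continuity remark securing the endpoint $t_0$, is a modest streamlining of the paper's argument rather than a genuinely different route.
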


\begin{proof}
If $f(t_0)>0$, then $|f|=f$ on a left neighborhood of $t_0$, and $\phi_L(f)>\phi_L(-f)$, so $\phi_L(|f|)=|\phi_L(f)|$. On the other hand, if $f(t_0)<0$, then $|f|=-f$ on a left neighborhood of $t_0$ and $\phi_L(f)<\phi_L(-f)$, so $\phi_L(|f|)=|\phi_L(f)|$. Suppose that 
\[
f(t_0)=f'_-(t_0)=\ldots=f_-^{(k)}(t_0)=0 
\]
for some $0\le k<n=\mathrm{deg}(f)$. Then on a left neighborhood of $t_0$ we have that $f$ is of the form 
\[
f(t)=a_{k+1}(t-t_0)^{k+1}+\ldots+a_{n-1}(t-t_0)^{n-1}+a_n(t-t_0)^n.
\]
If $f^{(k+1)}_-(t_0)=(k+1)!a_{k+1}>0$, then for $k$ odd we have that
\begin{align}\label{equation for principal 1}
f(t)=|t-t_0|^{k+1}\left(a_{k+1}+a_{k+2}(t-t_0)+\ldots+a_n(t-t_0)^{n-k-1}\right),
\end{align}
so we may choose a sufficiently small left neighborhood of $t_0$ such that $|f|=f$ on that left neighborhood and as $\phi_L(f)>\phi_L(-f)$, it follows that $\phi_L(|f|)=|\phi_L(f)|$. In case $k$ is even, we have that 
\begin{align}\label{equation for principal 2}
-f(t)=|t-t_0|^{k+1}\left(a_{k+1}+a_{k+2}(t-t_0)+\ldots+a_n(t-t_0)^{n-k-1}\right),
\end{align}
so $|f|=-f$ on a sufficiently small left neighborhood of $t_0$. Hence $\phi_L(f)<\phi_L(-f)$ and $\phi_L(|f|)=|\phi_L(f)|$. Suppose now that $f^{(k+1)}_-(t_0)=(k+1)!a_{k+1}<0$. If $k$ is odd, then similarly, we find that we may chose a sufficiently small left neighborhood of $t_0$ such that $|f|=-f$. Since $\phi_L(f)<\phi_L(-f)$, it follows that $\phi_L(|f|)=|\phi_L(f)|$. If $k$ is even, then there is a sufficiently small left neighborhood of $t_0$ such that $|f|=f$, and as $\phi_L(f)>\phi_L(-f)$, we see that $\phi_L(|f|)=|\phi_L(f)|$. Hence $\phi_L$ is a Riesz homomorphism with kernel $L_{t_0}$. A similar argument shows that $\varphi_R$ is a Riesz homomorphism with kernel $R_{t_0}$.
\end{proof}
For brevity, write $E:=\mathrm{PPol}^n([a,b])$. Let $t_0\in(a,b]$ and define $L_{t_0}^k$ for $1\le k\le n$ by
\begin{align}\label{E:L_t^k}
   L_{t_0}^k:=\left\{f\in M_{t_0}^E \colon f_-^{(k)}(t_0)=f_-^{(k-1)}(t_0)=\ldots=f_-'(t_0)=0\right\}, 
\end{align}
and similarly 
\begin{align}\label{E:R_t^k}
R_{t_0}^k:=\left\{f\in M_{t_0}^E \colon f_+^{(k)}(t_0)=f_+^{(k-1)}(t_0)=\ldots=f_+'(t_0)=0\right\}
\end{align}
for $1\le k\le n$ whenever $t_0\in[a,b)$. Note that $L_{t_0}^n=L_{t_0}$ and $R_{t_0}^n=R_{t_0}$. It turns out that all non-maximal prime ideals in $E$ are of this form.

\begin{theorem}\label{P: prime ideals in Pol^n}
The non-maximal prime ideals in $\mathrm{PPol}^n([a,b])$ are of the form $L_{t_0}^k$ for some $1\le k\le n$ and $t_0\in(a,b]$, or are of the form $R_{t_0}^k$ for some $1\le k\le n$ and $t_0\in[a,b)$.
\end{theorem}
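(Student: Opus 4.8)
The plan is to reduce the classification to that of the order ideals of $\mathbb{R}^{n+1}$ equipped with the lexicographic ordering, by pushing through the Riesz homomorphisms $\varphi_L$ and $\varphi_R$ of \eqref{formula Riesz hom left} and \eqref{formula Riesz hom right}. Throughout, write $E:=\mathrm{PPol}^n([a,b])$. First I would note that for $t_0\in(a,b]$ the map $\varphi_L$ is not merely a Riesz homomorphism with kernel $L_{t_0}$ (\Cref{L: phi_L and phi_R are hom's}) but is also \emph{surjective}: given $(c_0,\dots,c_n)\in\mathbb{R}^{n+1}$, the polynomial $p(t)=\sum_{j=0}^{n}\frac{(-1)^jc_j}{j!}(t-t_0)^j$ lies in $E$ and satisfies $\varphi_L(p)=(c_0,\dots,c_n)$. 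Hence $E/L_{t_0}$ is lattice isomorphic to $\mathbb{R}^{n+1}$ with the lexicographic ordering, and symmetrically $E/R_{t_0}\cong\mathbb{R}^{n+1}_{\mathrm{lex}}$ for $t_0\in[a,b)$.

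Next I would classify the order ideals of $\mathbb{R}^{n+1}_{\mathrm{lex}}$. Since this ordering is total, $\{0\}$ is prime and therefore every order ideal is prime, and a routine check shows that the order ideals are exactly the subspaces $J_k:=\{x\in\mathbb{R}^{n+1}\colon x_1=\cdots=x_{n+1-k}=0\}$, $0\le k\le n+1$, which form a chain $\{0\}=J_0\subsetneq J_1\subsetneq\cdots\subsetneq J_{n+1}=\mathbb{R}^{n+1}$ (each $J_k$ is solid because $|x|$ is lexicographically non-negative; conversely a nonzero order ideal, having an element whose leading nonzero coordinate occurs at the earliest possible position, contains a scalar multiple lexicographically dominating every vector vanishing in the coordinates before that position, so it must be one of the $J_k$). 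Because the ideals of $E$ containing $L_{t_0}$ correspond bijectively, and in a primeness-preserving way, to the ideals of $E/L_{t_0}$, the proper prime ideals of $E$ containing $L_{t_0}$ are precisely the $\varphi_L^{-1}(J_k)$ with $0\le k\le n$; unwinding \eqref{formula Riesz hom left} against \eqref{E:L_t^k} gives $\varphi_L^{-1}(J_n)=M_{t_0}^E$ and $\varphi_L^{-1}(J_k)=L_{t_0}^{n-k}$ for $0\le k\le n-1$. Thus $L_{t_0}=L_{t_0}^n\subsetneq L_{t_0}^{n-1}\subsetneq\cdots\subsetneq L_{t_0}^1\subsetneq M_{t_0}^E$ is the complete list of proper prime ideals containing $L_{t_0}$, so each $L_{t_0}^k$ with $1\le k\le n$ is a non-maximal prime ideal and no other non-maximal prime ideal contains $L_{t_0}$; the statement with $R_{t_0}$, $R_{t_0}^k$ and $\varphi_R$ holds verbatim by symmetry.

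Finally I would run the reduction for an arbitrary non-maximal prime ideal $P$ of $E$. Since $E$ is uniformly (hence norm) dense in $C([a,b])$ by the lattice Stone--Weierstrass theorem, \Cref{L:lemma about maximals} supplies a unique $t_0\in[a,b]$ with $P\subseteq M_{t_0}^E$, and as $M_{t_0}^E$ is maximal while $P$ is not, $P\subsetneq M_{t_0}^E$. Every proper prime ideal contains a minimal prime ideal, and by \Cref{L:minimal prime in poly} together with the uniqueness just quoted, a minimal prime ideal contained in $M_{t_0}^E$ must be $L_{t_0}$ (when $t_0>a$) or $R_{t_0}$ (when $t_0<b$). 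In the first case the previous paragraph gives $P=L_{t_0}^k$ for some $1\le k\le n$, and in the second $P=R_{t_0}^k$ for some $1\le k\le n$, which is the assertion.

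The step I expect to be the main obstacle is the surjectivity of $\varphi_L$ (and $\varphi_R$) combined with the bookkeeping that identifies $\varphi_L^{-1}(J_k)$ with the concrete ideal $L_{t_0}^{n-k}$: it is precisely surjectivity that pins down $E/L_{t_0}$ as the \emph{full} lexicographic space $\mathbb{R}^{n+1}_{\mathrm{lex}}$ (a proper Riesz subspace could a priori carry a shorter chain of ideals), and the index matching must be done carefully so that the boundary cases $k=n$ (yielding $M_{t_0}^E$) and $k=0$ (yielding the minimal prime $L_{t_0}=L_{t_0}^n$) come out correctly. Everything else is routine verification.
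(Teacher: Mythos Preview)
Your proof is correct and follows essentially the same route as the paper: both push the problem through the Riesz homomorphisms $\varphi_L,\varphi_R$ to the lexicographic space and then reduce an arbitrary non-maximal prime to one containing a minimal prime $L_{t_0}$ or $R_{t_0}$ via \Cref{L:minimal prime in poly}. The only cosmetic difference is that you invoke surjectivity of $\varphi_L$ and the ideal correspondence theorem to classify all ideals of $E/L_{t_0}\cong\mathbb{R}^{n+1}_{\mathrm{lex}}$ at once, whereas the paper instead verifies directly that each successive quotient $L_{t_0}^{k-1}/L_{t_0}^{k}$ is one-dimensional (via the functionals $f\mapsto f_-^{(k)}(t_0)$), which achieves the same squeezing without explicitly naming surjectivity.
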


\begin{proof}
Write $E:=\mathrm{PPol}^n([a,b])$. Let $t_0\in(a,b]$. Then $L_{t_0}^k$ is the preimage of the prime ideal 
\[
I_k:=\{x\in\mathbb{R}^{n+1}\colon x_i=0,\ 1\le i\le k\}
\]
under $\phi_L$ forcing it to be a prime ideal in $E$ as $\varphi_L$ is a Riesz homomorphism by Lemma~\ref{L: phi_L and phi_R are hom's}. Indeed, consider the linear map $\sigma_k\colon \mathbb{R}^{n+1}\to \mathbb{R}^k$ by $\sigma_k(x_1,\ldots,x_{n+1}):=(x_1,\ldots,x_k)$. If we equip $\mathbb{R}^{n+1}$ and $\mathbb{R}^k$ with the lexicographical ordering, it follows that $\sigma_k$ is a Riesz homomorphism. To see this, let $x\in\mathbb{R}^{n+1}$. If $x_1<-x_1$, then $|x|=-x$ and $\sigma_k(|x|)=|\sigma_k(x)|$, and if $x_1>-x_1$, then $|x|=x$ and $\sigma_k(|x|)=|\sigma_k(x)|$. Suppose that $x_i=0$ for $1\le i\le l<k$ and $x_{l+1}<-x_{l+1}$, then $|x|=-x$ and $\sigma_k(|x|)=|\sigma_k(x)|$, and if $x_{l+1}>-x_{l+1}$, then $|x|=x$ and $\sigma_k(|x|)=|\sigma_k(x)|$. In the case where $x_i=0$ for all $1\le i\le k$ it is clear that $\sigma_k(|x|)=|\sigma_k(x)|=0$. Hence, $\sigma_k$ is a Riesz homomorphism. Since the kernel of $\sigma_k$ is $I_k$, it follows that $I_k$ is an ideal in $\mathbb{R}^{n+1}$, and as the quotient vector lattice $\mathbb{R}^{n+1}/I_k$ is linearly ordered, we conclude that $I_k$ is a prime ideal in $\mathbb{R}^{n+1}$ by \cite[Theorem~33.2]{Zaanen}. Similarly, we conclude for $t_0\in [a,b)$ that $R_{t_0}^k$ is a prime ideal in $E$ for all $1\le k\le n$.  
Note that the linear maps $\phi_{k}\colon L_{t_0}^k\to \mathbb{R}$ given by $$\phi_{k}(f):=f_-^{(k+1)}(t_0)$$ have kernels $L_{t_0}^{k+1}$ for $1\le k\le n-1$, and the linear map $\phi_0\colon M_{t_0}^E\to \mathbb{R}$ given by $\phi_0(f):=f'_-(t_0)$ has kernel $L^1_{t_0}$. Hence 
\[
L_{t_0}^{n-1}/L_{t_0}\cong \dots \cong L^k_{t_0}/L_{t_0}^{k+1}\cong\dots\cong M_{t_0}^E/L^1_{t_0}\cong\mathbb{R}, 
\]
and similarly, we have 
\[
R_{t_0}^{n-1}/R_{t_0}\cong \dots \cong R^k_{t_0}/R_{t_0}^{k+1}\cong\dots\cong M_{t_0}^E/R^1_{t_0}\cong\mathbb{R}. 
\]
It follows that if $P$ is a non-maximal prime ideal in $E$, then it contains a minimal prime ideal which is of the form $L_{t_0}$ for some $t_0\in(a,b]$ or $R_{t_0}$ for some $t_0\in[a,b)$ by \Cref{L:minimal prime in poly}. Thus, we have $L_{t_0}\subseteq P\subsetneq M_{t_0}^E$ or $R_{t_0}\subseteq P\subsetneq M_{t_0}^E$. If $k\ge 1$ is the smallest number such that $L_{t_0}^k\subseteq P$, then we must have $L_{t_0}^k\subseteq P\subseteq L^{k-1}_{t_0}$, so $P=L_{t_0}^k$ or $P=L_{t_0}^{k-1}$. Similarly, if $P$ is a non-maximal prime ideal such that $R_{t_0}\subseteq P\subseteq M_{t_0}^E$, then $P=R_{t_0}$ or $P=R^k_{t_0}$ for some $k\ge 1$.
\end{proof}

\begin{corollary}
The vector lattice $\mathrm{PPol}^n([a,b])$ is prime Noetherian. Furthermore, any ascending chain of prime ideals is of length at most $n$, and it contains a chain of ascending prime ideals of length $n$.
\end{corollary}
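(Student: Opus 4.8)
The plan is to read off everything from the classification in \Cref{P: prime ideals in Pol^n} together with the elementary fact, recalled in the preliminaries, that the collection of (prime) ideals containing a fixed prime ideal is linearly ordered by inclusion. Writing $E:=\mathrm{PPol}^n([a,b])$, the whole statement reduces to determining, for an arbitrary prime ideal $P$ in $E$, exactly which prime ideals contain $P$. Once the resulting finite chain above $P$ is identified, the length bound is immediate, the prime Noetherian property follows because a non-stationary ascending chain would produce arbitrarily long strictly ascending chains, and the extremal example is exhibited by hand.

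The core step is the following local computation at a point; fix $t_0\in(a,b]$ and $1\le k\le n$, and let $Q$ be a prime ideal with $L_{t_0}^k\subseteq Q$. Since $E$ is uniformly dense in $C([a,b])$ by the lattice Stone--Weierstrass theorem, \Cref{L:lemma about maximals} applies. As all ideals above the prime ideal $L_{t_0}^k$ are linearly ordered, any two maximal ideals above it are comparable, hence equal; since $M_{t_0}^E$ is one of them, every maximal ideal above $L_{t_0}^k$ equals $M_{t_0}^E$, so in particular $Q\subseteq M_{t_0}^E$. By \Cref{P: prime ideals in Pol^n}, $Q$ is either a maximal ideal, in which case $Q=M_{t_0}^E$, or $Q=L_s^j$ or $Q=R_s^j$; since then $M_s^E$ also lies above $L_{t_0}^k$, the previous remark forces $M_s^E=M_{t_0}^E$, whence $s=t_0$ by \Cref{L:lemma about maximals}$(ii)$. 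The case $Q=R_{t_0}^j$ (which only arises when $t_0<b$) is impossible, because the function $t\mapsto(t-t_0)_+$ belongs to $L_{t_0}^k$ (all of its left derivatives at $t_0$ vanish) but has right derivative $1$ at $t_0$, so it is not in $R_{t_0}^j$. Finally $L_{t_0}^k\subseteq L_{t_0}^j$ holds exactly when $j\le k$, as $L_{t_0}^j$ is defined by the vanishing of more derivatives. Therefore the prime ideals containing $L_{t_0}^k$ are precisely
\[
L_{t_0}^k\subsetneq L_{t_0}^{k-1}\subsetneq\cdots\subsetneq L_{t_0}^1\subsetneq M_{t_0}^E,
\]
a chain with $k+1$ members, the strictness of the inclusions being exactly the isomorphisms $L_{t_0}^{j}/L_{t_0}^{j+1}\cong\mathbb{R}\cong M_{t_0}^E/L_{t_0}^1$ established inside the proof of \Cref{P: prime ideals in Pol^n}. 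The symmetric statement holds for $R_{t_0}^k$ with $t_0\in[a,b)$, and above a maximal ideal there is only that ideal itself.

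With this in hand the conclusion follows quickly. Given an ascending chain $P_1\subsetneq\cdots\subsetneq P_m$ of prime ideals, every $P_i$ contains $P_1$, and by \Cref{P: prime ideals in Pol^n} the ideal $P_1$ is one of $M_{t_0}^E$, $L_{t_0}^k$, or $R_{t_0}^k$; by the previous paragraph there are at most $n+1$ prime ideals containing $P_1$, so $m\le n+1$ and the chain has length at most $n$. Consequently no strictly ascending chain of prime ideals in $E$ is infinite, which means every ascending chain of prime ideals is stationary, so $E$ is prime Noetherian. For a chain of length exactly $n$ one takes
\[
L_{b}^n\subsetneq L_{b}^{n-1}\subsetneq\cdots\subsetneq L_{b}^1\subsetneq M_{b}^E,
\]
whose members are prime by \Cref{P: prime ideals in Pol^n} (and because maximal ideals are prime) and whose inclusions are strict by the quotient computation quoted above. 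I expect the only genuinely delicate point to be ruling out the ``crossovers'' in the local computation: that a prime ideal above $L_{t_0}^k$ can neither be of $R$-type nor sit at a point different from $t_0$. Both are dispatched by combining the linear ordering of ideals above a fixed prime ideal with the uniqueness in \Cref{L:lemma about maximals}$(ii)$ and the single explicit witness $t\mapsto(t-t_0)_+$; everything else is bookkeeping on top of \Cref{P: prime ideals in Pol^n}.
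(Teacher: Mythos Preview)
Your proof is correct and follows the same approach as the paper, namely reading everything off the classification in \Cref{P: prime ideals in Pol^n}. The paper's own proof is extremely terse: it only exhibits the length-$n$ chains $L_{t_0}^n\subseteq\cdots\subseteq L_{t_0}^1\subseteq M_{t_0}^E$ and $R_{t_0}^n\subseteq\cdots\subseteq R_{t_0}^1\subseteq M_{t_0}^E$, treating the upper bound and the prime Noetherian property as immediate from the classification, whereas you explicitly carry out the local computation (uniqueness of the base point via \Cref{L:lemma about maximals}, ruling out $R$-type primes above $L_{t_0}^k$ with the witness $(t-t_0)_+$) that the paper leaves implicit. One small wording slip: in ``$L_{t_0}^j$ is defined by the vanishing of more derivatives'' you mean $L_{t_0}^k$; the inclusion you state is correct.
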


\begin{proof}
The ascending chains of prime ideals
\[
L_{t_0}=L^n_{t_0}\subseteq L^{n-1}_{t_0}\subseteq\ldots\subseteq L_{t_0}^1\subseteq M_{t_0}^E\quad  \mbox{and}\quad R_{t_0}=R^n_{t_0}\subseteq R^{n-1}_{t_0}\subseteq\ldots\subseteq R_{t_0}^1\subseteq M_{t_0}^E 
\]
are of length $n$.
\end{proof}

For the piecewise polynomials of arbitrary degree $\mathrm{PPol}([a,b])$ and $t_0\in(a,b]$ we define the linear map $\psi_L\colon \mathrm{PPol}([a,b])\to c_{00}$
by 
\begin{align}\label{formula Riesz hom left2}
\psi_L(f):=(f(t_0),-f'_-(t_0),f_-^{''}(t_0),\ldots,(-1)^{k}f_-^{(k)}(t_0),(-1)^{k+1}f^{(k+1)}_+(t_0),\ldots).
\end{align}
Similarly for $t_0\in[a,b)$ we define the linear map $\psi_R\colon \mathrm{PPol}([a,b])\to c_{00}$   by 
\begin{align}\label{formula Riesz hom right2}
\psi_R(f):=(f(t_0),-f_+'(t_0),f_+^{''}(t_0),\ldots,(-1)^kf^{(k+1)}_+(t_0),(-1)^{k+1}f^{(k+1)}_+(t_0),\ldots). 
\end{align}
If we equip $c_{00}$ with the lexicographical ordering, then this yields a totally ordered vector lattice which will be denoted by $\mathrm{Lex}(\mathbb{N})$. For a literary reference, see \cite[Section~2]{Wortel}. It follows that $\psi_L$ and $\psi_R$ are Riesz homomorphisms.

\begin{lemma}\label{L: psi_L and psi_R are hom's}
For $t_0\in(a,b]$ the map $\psi_L$ as in \eqref{formula Riesz hom left2} is a Riesz homomorphism with kernel $L_{t_0}$ and for $t_0\in[a,b)$ the map $\psi_R$ as in \eqref{formula Riesz hom right2} is a Riesz homomorphism with kernel $R_{t_0}$. 
\end{lemma}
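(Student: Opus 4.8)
The plan is to mirror the proof of \Cref{L: phi_L and phi_R are hom's} almost verbatim, the one genuinely new point being that the codomain is now the totally ordered vector lattice $\mathrm{Lex}(\mathbb{N})$ rather than a finite-dimensional $\mathbb{R}^{n+1}$, so I would first check that $\psi_L$ really lands in $c_{00}$. This is immediate: as $f$ is a piecewise polynomial there is a $\delta>0$ for which $f$ coincides on $(t_0-\delta,t_0]$ with a single polynomial $p(t)=\sum_{j\ge 0}a_j(t-t_0)^j$ (the piece immediately to the left of $t_0$, in case $t_0$ is a breakpoint), and this $p$ has finite degree, so $f_-^{(j)}(t_0)=p^{(j)}(t_0)=0$ for all large $j$; hence $\psi_L(f)$ has finite support. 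Linearity of $\psi_L$ follows from linearity of the derivative.

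Next I would verify the Riesz homomorphism identity $\psi_L(|f|)=|\psi_L(f)|$, the right-hand absolute value being computed in the lexicographic order, under which a nonzero sequence is positive exactly when its first nonzero entry is positive. Fix $f$ together with $p$ and $\delta$ as above. If every $a_j$ vanishes, then $f$ is zero on $(t_0-\delta,t_0]$, so $f\in L_{t_0}$, and the same is true of $|f|$, so both sides are $0$. Otherwise let $k$ be the least index with $a_k\ne 0$. On a possibly smaller left neighbourhood of $t_0$ the factor $(t-t_0)^k$ has constant sign $(-1)^k$ while $a_k+a_{k+1}(t-t_0)+\cdots$ has constant sign $\sgn(a_k)$, so $f$ has constant sign $(-1)^k\sgn(a_k)$ there and $|f|$ equals $f$ or $-f$ on that neighbourhood accordingly. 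On the other hand the first nonzero coordinate of $\psi_L(f)$ is $(-1)^k f_-^{(k)}(t_0)=(-1)^k k!\,a_k$, whose sign is again $(-1)^k\sgn(a_k)$; thus $\psi_L(f)$ is lexicographically positive precisely when $|f|=f$ near $t_0$ from the left and lexicographically negative precisely when $|f|=-f$ there. In the first case $\psi_L(|f|)=\psi_L(f)=|\psi_L(f)|$ and in the second $\psi_L(|f|)=\psi_L(-f)=-\psi_L(f)=|\psi_L(f)|$ by linearity, which exhausts the possibilities. Hence $\psi_L$ is a Riesz homomorphism.

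For the kernel, $\psi_L(f)=0$ forces $f(t_0)=0$ together with $f_-^{(j)}(t_0)=0$ for every $j\ge 1$, so $p\equiv 0$ and $f$ vanishes on $(t_0-\delta,t_0]$, which is exactly the condition defining $L_{t_0}$; conversely every member of $L_{t_0}$ has all of its left derivatives at $t_0$ equal to zero and hence lies in $\ker\psi_L$. The corresponding statement for $\psi_R$ and $R_{t_0}$ is proved by the same argument applied to right neighbourhoods of $t_0$, where $(t-t_0)^j$ is positive for every $j$. I do not expect a real obstacle here: the only steps that call for a little care are the well-definedness of $\psi_L$ into $c_{00}$, the matching of the sign of the leading coordinate of $\psi_L(f)$ with the sign of $f$ on a one-sided neighbourhood of $t_0$, and, if one wishes to be thorough, recording that $\mathrm{Lex}(\mathbb{N})$ is genuinely a totally ordered vector lattice, for which \cite[Section~2]{Wortel} may be cited.
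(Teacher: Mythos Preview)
Your proposal is correct and follows essentially the same approach as the paper: both arguments observe that $f$ agrees with a single polynomial on a one-sided neighbourhood of $t_0$ and then reduce the verification of $\psi_L(|f|)=|\psi_L(f)|$ to the case analysis already carried out for $\phi_L$ in \Cref{L: phi_L and phi_R are hom's}. The only minor differences are that you explicitly verify well-definedness of $\psi_L$ into $c_{00}$ and spell out the kernel computation, whereas the paper instead records surjectivity of $\psi_L$ (which is not actually needed for the lemma as stated) and leaves the kernel implicit.
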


\begin{proof}
Note that $\psi_L$ is surjective as for any $(x_n)_{n\in\N}$ in $c_{00}$ there is a smallest $N\ge 1$ such that $x_n=0$ for all $n\ge N+1$, and the polynomial defined by 
\[
f(t):=x_1-x_2(t-t_0)+\ldots+(-1)^{N-1}\tfrac{1}{(N-1)!}x_{N}(t-t_0)^{N-1}
\]
satisfies $\psi_L(f)=(x_n)_{n\in\N}$. Let $f\in E$. Then it follows that $\psi_L(|f|)=|\psi_L(f)|$ via analogous reasoning for the map $\phi_L$ in $\eqref{formula Riesz hom left}$ where $n$ equals the degree of the polynomial that equals $f$ on a left neighborhood of $t_0$. Hence, $\psi_L$ is a Riesz homomorphism with kernel $L_{t_0}$. Analogous to the reasoning in proving that $\varphi_R$ is a Riesz homomorphism, it follows that $\psi_R$ is a Riesz homomorphism with kernel $R_{t_0}$.
\end{proof}

If we write $E:=\mathrm{PPol}([a,b])$, then as in \eqref{E:L_t^k} and \eqref{E:R_t^k} we consider
\[
L_{t_0}^k:=\left\{f\in M_{t_0}^E\colon f_-^{(k)}(t_0)=f_-^{(k-1)}(t_0)=\ldots=f'_-(t_0)=0\right\}
\]
for $t_0\in(a,b]$ and all $k\in \N$ in this case, and 
\[
R_{t_0}^k:=\left\{f\in M_{t_0}^E\colon f_+^{(k)}(t_0)=f_+^{(k-1)}(t_0)=\ldots=f'_+(t_0)=0\right\}
\]
for $t_0\in[a,b)$ and all $k\in \N$ in this case. Note that $\bigcap_{k=1}^\infty L_{t_0}^k=L_{t_0}$ and $\bigcap_{k=1}^\infty R_{t_0}^k=R_{t_0}$, and it follows that these are in fact all the non-maximal prime ideals in $E$.

\begin{theorem}\label{P: prime ideals in Pol}
The non-maximal and non-minimal prime ideals in $\mathrm{PPol}([a,b])$ are of the form $L_{t_0}^k$ for some $k\in\N$ and $t_0\in(a,b]$, or are of the form $R_{t_0}^k$ for some $k\in \N$ and $t_0\in[a,b)$.
\end{theorem}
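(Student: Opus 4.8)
The plan is to follow the proof of \Cref{P: prime ideals in Pol^n}, now using the totally ordered vector lattice $\mathrm{Lex}(\mathbb{N})$ in place of $\mathbb{R}^{n+1}$ with the lexicographic order, while paying attention to the fact that the chain of prime ideals above a minimal one is no longer finite. Let $E:=\mathrm{PPol}([a,b])$ and let $P$ be a prime ideal of $E$ that is neither maximal nor minimal. Since $E$ is norm dense in $C([a,b])$, \Cref{L:lemma about maximals}$(iii)$ yields a unique $t_0\in[a,b]$ with $P\subseteq M_{t_0}^E$, and as $P$ is not maximal, $P\subsetneq M_{t_0}^E$. Because every prime ideal contains a minimal prime ideal, \Cref{L:minimal prime in poly} gives a minimal prime $Q\subseteq P$ equal to $L_s$ for some $s\in(a,b]$ or to $R_s$ for some $s\in[a,b)$; as the ideals containing $Q$ form a chain, the maximal ideals $M_s^E$ and $M_{t_0}^E$ are comparable, hence equal, so $s=t_0$ by \Cref{L:lemma about maximals}$(ii)$.

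First I would rule out that $P$ contains both $L_{t_0}$ and $R_{t_0}$. Given a target $(0,x_2,\ldots,x_N,0,0,\ldots)$ in $\mathrm{Lex}(\mathbb{N})$, the piecewise polynomial equal to $-x_2(t-t_0)+\tfrac{1}{2}x_3(t-t_0)^2-\cdots$ on $[a,t_0]$ and to $0$ on $[t_0,b]$ lies in $R_{t_0}$ and is sent to that target by $\psi_L$; hence $\psi_L(R_{t_0})=\psi_L(M_{t_0}^E)=\{x\in\mathrm{Lex}(\mathbb{N}):x_1=0\}$, and since $\ker\psi_L=L_{t_0}$ by \Cref{L: psi_L and psi_R are hom's}, it follows that $M_{t_0}^E=L_{t_0}+R_{t_0}$. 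Therefore a proper ideal of $E$ inside $M_{t_0}^E$ cannot contain both $L_{t_0}$ and $R_{t_0}$, so after possibly interchanging the roles of $\psi_L$ and $\psi_R$ we may assume $L_{t_0}\subseteq P\subsetneq M_{t_0}^E$ with $t_0\in(a,b]$.

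Next I would pass to the quotient by $L_{t_0}=\ker\psi_L$. The Riesz homomorphism $\psi_L$ maps $M_{t_0}^E$ onto $\{x\in\mathrm{Lex}(\mathbb{N}):x_1=0\}$, which is lattice isomorphic to $\mathrm{Lex}(\mathbb{N})$ via the coordinate shift, and under this isomorphism $L_{t_0}^k$ corresponds to the tail $J_k:=\{x:x_1=\cdots=x_k=0\}$. Since $\mathrm{Lex}(\mathbb{N})$ is totally ordered, every proper ideal of it is automatically prime, so it only remains to classify its ideals. The crucial computation is that the ideal generated by a vector whose first nonzero coordinate sits at position $m+1$ equals $J_m$: every vector supported in coordinates $\geq m+1$ is dominated by a sufficiently large multiple of it, while a vector with an earlier nonzero coordinate is not. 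Hence every nonzero ideal of $\mathrm{Lex}(\mathbb{N})$ is one of the $J_m$ (this is also recorded in \cite[Section~2]{Wortel}). Translating back, the proper ideals of $E$ lying between $L_{t_0}$ and $M_{t_0}^E$ are exactly $L_{t_0}$ together with the $L_{t_0}^k$ for $k\in\mathbb{N}$, and each of these is prime; since $P$ is not minimal we have $P\neq L_{t_0}$, forcing $P=L_{t_0}^k$ for some $k$. The mirror argument with $\psi_R$ settles the case $R_{t_0}\subseteq P$, giving $P=R_{t_0}^k$.

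The main obstacle is the classification of the ideals of $\mathrm{Lex}(\mathbb{N})$: in contrast with the finite-degree case, where the relevant chain of prime ideals is finite and consecutive quotients are one-dimensional, here one must rule out the existence of any ideal other than $\{0\}$ and the tails $J_m$, for instance an ideal incomparable to some $J_m$. The absorption estimate above (or the reference to \cite[Section~2]{Wortel}) disposes of this. The remaining points — the uniqueness of $t_0$ and the splitting $M_{t_0}^E=L_{t_0}+R_{t_0}$ — are routine and rely only on facts already available for $E$.
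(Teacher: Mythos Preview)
Your proof is correct and follows the same overall architecture as the paper: locate $P$ inside some $M_{t_0}^E$, pick up a minimal prime $L_{t_0}$ or $R_{t_0}$ below it, and then classify the ideals in between. The difference lies in how that classification is carried out. The paper argues by sandwiching: it shows $L_{t_0}^{k-1}/L_{t_0}^{k}\cong\mathbb{R}$ for every $k$ and uses $\bigcap_{k}L_{t_0}^{k}=L_{t_0}$ together with the linear ordering of ideals above a prime, so that $P$ is trapped between two consecutive $L_{t_0}^{k}$'s and hence equals one of them. You instead pass wholesale to $\mathrm{Lex}(\mathbb{N})$ via $\psi_L$ and compute that every principal ideal there is a tail $J_m$, which yields the entire ideal lattice of $\mathrm{Lex}(\mathbb{N})$ in one stroke. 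Your route is a bit more direct and gives slightly more (namely, that \emph{every} ideal between $L_{t_0}$ and $M_{t_0}^E$, prime or not, is on the list), while the paper's route parallels the finite-degree proof of \Cref{P: prime ideals in Pol^n} more closely.

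One remark: your intermediate step showing $M_{t_0}^E=L_{t_0}+R_{t_0}$ (hence $P$ cannot contain both minimal primes) is correct but not actually needed. Once you know $L_{t_0}\subseteq P\subsetneq M_{t_0}^E$, your classification via $\psi_L$ already forces $P\in\{L_{t_0}\}\cup\{L_{t_0}^{k}:k\in\mathbb{N}\}$ regardless of whether $R_{t_0}\subseteq P$; the paper's proof likewise does not isolate this point.
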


\begin{proof}
Write $E:=\mathrm{PPol}([a,b])$. Let $t_0\in (a,b]$. Then $L^k_{t_0}$ is the preimage of the prime ideal 
\[
J_k:=\{x\in \mathrm{Lex}(\mathbb{N})\colon x_i=0,\ 1\le i\le k\} 
\]
under the Riesz homomorphism  $\psi_L$. It follows that $J_k$ is the kernel of $\tau_k\colon \mathrm{Lex}(\mathbb{N})\to\mathbb{R}^k$ defined by $\tau_k((x_n)_{n\in\N}):=(x_1,\ldots,x_k)$, which is proved to be a Riesz homomorphism analogously to showing that $\sigma_k$ is a Riesz homomorphism in \Cref{P: prime ideals in Pol^n}. Similarly, we have prime ideals 
\[
R_{t_0}^k:=\left\{f\in M_{t_0}^E\colon f_+^{(k)}(t_0)=f_+^{(k-1)}(t_0)=\ldots=f'_+(t_0)=0\right\}
\]
for all $k\in\N$. Furthermore, as in \Cref{P: prime ideals in Pol^n} we have $L^k_{t_0}/L^{k+1}_{t_0}\cong\mathbb{R}$ and $R^k_{t_0}/R^{k+1}_{t_0}\cong\mathbb{R}$ for $k\in\N$, and $M_{t_0}^E/L^1_{t_0}\cong\mathbb{R}$ and $M_{t_0}^E/R^1_{t_0}\cong\mathbb{R}$. 

Let $P$ be a non-maximal prime ideal in $E$ such that $L_{t_0}\subseteq P\subseteq M_{t_0}^E$ for some $t_0\in(a,b]$. If $P$ is contained in $L_{t_0}^k$ for all $k\in\N$, then $P\subseteq \bigcap_{k=1}^\infty L_{t_0}^k=L_{t_0}$, so $P=L_{t_0}$. On the other hand, if $k\in\N$ is the smallest number such that $L_{t_0}^k\subseteq P$, then $L_{t_0}^k\subseteq P\subseteq L^{k-1}_{t_0}$, so $P=L_{t_0}^k$ or $P=L_{t_0}^{k-1}$ as $L_{t_0}^{k-1}/L_{t_0}^{k}\cong\mathbb{R}$. Similarly, if $P$ is a non-maximal prime ideal such that $R_{t_0}\subseteq P\subseteq M_{t_0}^E$, then $P=R_{t_0}$ or $P=R^k_{t_0}$ for some $k\in\N$.
\end{proof}

\begin{corollary}
The vector lattice $\mathrm{PPol}([a,b])$ is prime Noetherian and contains ascending chains of prime ideals of arbitrary finite length.
\end{corollary}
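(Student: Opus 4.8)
The plan is to exploit the complete description of the prime ideals of $E:=\mathrm{PPol}([a,b])$ assembled above. Combining \Cref{L:lemma about maximals} (applicable since $E$ is uniformly dense in $C([a,b])$), \Cref{L:minimal prime in poly}, and \Cref{P: prime ideals in Pol}, every prime ideal of $E$ is exactly one of the following: a maximal ideal $M_{t_0}^E$ with $t_0\in[a,b]$; a minimal prime $L_{t_0}$ with $t_0\in(a,b]$ or $R_{t_0}$ with $t_0\in[a,b)$; or an intermediate prime $L_{t_0}^k$ or $R_{t_0}^k$ with $k\in\N$. The statement about chains of arbitrary finite length is then immediate: for $n\in\N$ and $t_0\in(a,b]$, the isomorphisms $M_{t_0}^E/L_{t_0}^1\cong\mathbb{R}$ and $L_{t_0}^k/L_{t_0}^{k+1}\cong\mathbb{R}$ recorded in \Cref{P: prime ideals in Pol} show that
\[
L_{t_0}^n\subsetneq L_{t_0}^{n-1}\subsetneq\cdots\subsetneq L_{t_0}^1\subsetneq M_{t_0}^E
\]
is a strictly ascending chain of prime ideals of length $n$.

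For the prime Noetherian property I would start from an ascending chain $P_1\subseteq P_2\subseteq\cdots$ of prime ideals and first argue that all $P_i$ are contained in one common maximal ideal $M_{t_0}^E$: by \Cref{L:lemma about maximals}$(iii)$ each $P_i$ lies in a unique maximal ideal $M_{x_i}^E$, and from $P_i\subseteq P_{i+1}\subseteq M_{x_{i+1}}^E$ the uniqueness forces $x_i=x_{i+1}$, so all $x_i$ equal some fixed $t_0$. Next I would note that the prime ideals strictly below $M_{t_0}^E$ fall into the \emph{left family} $L_{t_0}\subsetneq\cdots\subsetneq L_{t_0}^{k+1}\subsetneq L_{t_0}^k\subsetneq\cdots\subsetneq L_{t_0}^1\subsetneq M_{t_0}^E$ and the analogous \emph{right family}, and that a member of the left family is always incomparable with a member of the right family (and $L_{t_0}$ with $R_{t_0}$): the function $(t-t_0)_+$ lies in $L_{t_0}$ but in no $R_{t_0}^k$, since its one-sided right derivative at $t_0$ equals $1$, and symmetrically. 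As consecutive members of the chain are comparable, it follows that either $P_i=M_{t_0}^E$ for some $i$, whence the chain is constant from that index onwards, or every $P_i$ belongs to one single family.

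In the remaining case, say the left one, I would encode $L_{t_0}^k$ by the label $k\in\N$ and $L_{t_0}=\bigcap_k L_{t_0}^k$ by the label $\infty$, so that on the left family inclusion is the reverse of the usual order on $\N\cup\{\infty\}$. The ascending chain then corresponds to a non-increasing sequence of labels in $\N\cup\{\infty\}$, which must be eventually constant since any strictly decreasing sequence in $\N\cup\{\infty\}$ is finite. Hence the chain of prime ideals stabilizes, and $E$ is prime Noetherian. The only point that requires genuine care is the exhaustiveness of the list of primes lying below a fixed $M_{t_0}^E$, together with their linear ordering within each family and the left/right incomparability; but all of this is essentially already contained in \Cref{L:minimal prime in poly} and \Cref{P: prime ideals in Pol}, so the remaining argument is largely organizational.
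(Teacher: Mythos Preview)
Your argument is correct and follows essentially the same route as the paper: both proofs rest on the classification of primes below a fixed $M_{t_0}^E$ from \Cref{L:minimal prime in poly} and \Cref{P: prime ideals in Pol}, and then observe that any prime of the form $L_{t_0}^k$ or $R_{t_0}^k$ is contained in only finitely many primes, forcing the chain to stabilize. Your treatment is somewhat more explicit than the paper's---you spell out why all $P_i$ lie under a single $M_{t_0}^E$ and why the left and right families are incomparable---whereas the paper simply notes that a non-minimal $P_n$ has only finitely many primes above it; but the underlying idea is the same.
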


\begin{proof}
Write $E:=\mathrm{PPol}([a,b])$. Let $P_1\subseteq P_2\subseteq\ldots$ be an ascending chain of prime ideals. Then for any $n\in\N$, it follows that $L_{t_0}\subseteq P_n\subseteq M^E_{t_0}$ for some $t_0\in(a,b]$ or $R_{t_0}\subseteq P_n\subseteq M^E_{t_0}$ for some $t_0\in [a,b)$. By \Cref{P: prime ideals in Pol} the only $P_n$ such that $L_{t_0}\subsetneq P_n\subsetneq M_{t_0}^E$ or $R_{t_0}\subsetneq P_n\subsetneq M_{t_0}^E$ must be of the form $P_n=L_{t_0}^k$ or $P_n=R_{t_0}^k$, respectively, for some $k\in\N$. In this case $P_n$ can only be contained in finitely many prime ideals, so the chain must be stationary. Note that for $n\in\N$ we have ascending chains of prime ideals
\[
L^n_{t_0}\subseteq L^{n-1}_{t_0}\subseteq\ldots\subseteq L_{t_0}^1\subseteq M_{t_0}^E\quad  \mbox{and}\quad R^n_{t_0}\subseteq R^{n-1}_{t_0}\subseteq\ldots\subseteq R_{t_0}^1\subseteq M_{t_0}^E 
\]
of length $n$ in $E$.
\end{proof}

In view of Theorem~\ref{Theorem: principal prime is maximal} there are no non-maximal principal prime ideals in uniformly complete Archimedean vector lattices. However, there is an abundance of principal prime ideals, even ascending chains of arbitrary finite length, in vector lattices of piecewise polynomials.

\begin{theorem}\label{T:principal primes in Pol}
Let $E$ be the vector lattice $\mathrm{PPol}^n([a,b])$ or $\mathrm{PPol}([a,b])$. Then all non-minimal prime ideals in $E$ are principal.
\end{theorem}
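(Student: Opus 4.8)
From \Cref{P: prime ideals in Pol^n} and \Cref{P: prime ideals in Pol}, a non-minimal prime ideal in $E$ is either a maximal ideal $M_{t_0}^E$ with $t_0\in[a,b]$, or one of the ideals $L_{t_0}^k$ with $t_0\in(a,b]$, or $R_{t_0}^k$ with $t_0\in[a,b)$, where $k$ runs over $1\le k\le n-1$ in the case $E=\mathrm{PPol}^n([a,b])$ and over all of $\N$ in the case $E=\mathrm{PPol}([a,b])$; in the bounded-degree case the restriction $k\le n-1$ is exactly what non-minimality provides, since $L_{t_0}^n=L_{t_0}$ and $R_{t_0}^n=R_{t_0}$ are the minimal primes. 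The plan is to exhibit for each of these ideals an explicit positive generator $g\in E$ and then to prove that $I_g$ equals the ideal by establishing a domination estimate $|f|\le\lambda g$ for every $f$ in the ideal, splitting this estimate into the behaviour of $f$ near $t_0$ and the behaviour away from $t_0$.

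For an interior point $t_0\in(a,b)$ I would take
\[
g_L(t):=(t_0-t)_+^{\,k+1}+(t-t_0)_+ ,\qquad g_R(t):=(t_0-t)_++(t-t_0)_+^{\,k+1}
\]
as candidate generators for $L_{t_0}^k$ and $R_{t_0}^k$, and $g_M(t):=|t-t_0|$, the ``$k=0$'' instance of either formula, as candidate generator for $M_{t_0}^E$; at the endpoints the absent one-sided summand is simply dropped, so that $(b-t)^{k+1}$ generates $L_b^k$, that $(t-a)^{k+1}$ generates $R_a^k$, and that $b-t$ and $t-a$ generate $M_b^E$ and $M_a^E$ respectively. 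Each of these functions is piecewise polynomial of degree at most $k+1$, and $k+1\le n$ precisely because $k\le n-1$ for $\mathrm{PPol}^n([a,b])$, so every candidate genuinely lies in $E$.

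That $g$ belongs to the ideal in question is a short computation of its one-sided derivatives at $t_0$, and it gives $I_g\subseteq L_{t_0}^k$ at once. For the reverse inclusion $L_{t_0}^k\subseteq I_g$, fix $f\in L_{t_0}^k$: on a sufficiently small left neighbourhood of $t_0$ the function $f$ coincides with a single polynomial $p$, and the requirements $f(t_0)=0$ and $f\in L_{t_0}^k$ force $p(t_0)=p'(t_0)=\cdots=p^{(k)}(t_0)=0$, so Taylor's theorem yields $p(t)=(t-t_0)^{k+1}q(t)$ for some polynomial $q$ and hence $|f(t)|\le\bigl(\sup_{[a,b]}|q|\bigr)(t_0-t)_+^{\,k+1}$ on that neighbourhood; likewise $|f(t)|\le\lambda'(t-t_0)_+$ on a small right neighbourhood of $t_0$, using only $f(t_0)=0$; and on the complement of a fixed neighbourhood of $t_0$ the generator $g$ is bounded below by a positive constant $\varepsilon$ while $\|f\|_\infty<\infty$, so $|f|\le(\|f\|_\infty/\varepsilon)\,g$ there. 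Taking $\lambda$ to be the largest of these finitely many constants gives $|f|\le\lambda g$ on all of $[a,b]$, so $f\in I_g$. The maximal-ideal cases and the right-sided cases $R_{t_0}^k$ are verified in exactly the same way. The argument is essentially routine estimation; the two points that need genuine care are that $f$ really does coincide with a single polynomial on a one-sided neighbourhood of $t_0$ (immediate from the definition of piecewise polynomial once one notes $t_0>a$, resp.\ $t_0<b$, so that such a neighbourhood exists) and that the chosen generator respects the degree bound in $\mathrm{PPol}^n([a,b])$ --- which is exactly why the minimal primes $L_{t_0}=L_{t_0}^n$ and $R_{t_0}=R_{t_0}^n$ must be excluded from the statement.
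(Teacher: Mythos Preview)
Your proposal is correct and follows essentially the same approach as the paper: the same explicit generators (your $g_L$, $g_R$, $g_M$ are exactly the paper's piecewise-defined $h$, $g$, and $|t-t_0|$), and the same three-region domination estimate (Taylor factorisation on the ``flat'' side, first-order factorisation on the other side, and a positive lower bound for the generator away from $t_0$). Your write-up is in one respect slightly cleaner than the paper's: you make explicit that non-minimality forces $k\le n-1$ in the bounded-degree case, which is precisely what guarantees $g_L,g_R\in\mathrm{PPol}^n([a,b])$---the paper uses this implicitly but does not spell it out.
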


\begin{proof}
By Lemma~\ref{L:lemma about maximals}, every maximal ideal in $E$ is of the form $M_{t_0}^E$ for some $t_0\in[a,b]$, that is,
\[
M_{t_0}^E=\{f\in E\colon f(t_0)=0\}.
\]
We will show that $f(t):=|t-t_0|$ generates $M_{t_0}^E$. Indeed, suppose first that $t_0\in(a,b)$. If $g\in M_{t_0}^E$, then there is a left neighborhood $I_{t_0}$ and a right neighborhood $J_{t_0}$ of $t_0$ in $[a,b]$ such that $g$ is a polynomial on $I_{t_0}$ and $J_{t_0}$, and $|t-t_0|\le 1$ for all $t\in I_{t_0}\cup J_{t_0}$. That is, we have
\[
g(t)=a_1(t-t_0)+\ldots +a_n(t-t_0)^n\ \ \mbox{on $I_{t_0}$}\qquad \mbox{and}\qquad g(t)=b_1(t-t_0)+\ldots +b_n(t-t_0)^m\ \ \mbox{on $J_{t_0}$}.
\]
Hence, for $\mu:=\max\bigl\{\sum_{k=1}^n|a_k|,\sum_{k=1}^m|b_k|\bigr\}$, we find that $|g(t)|\le \mu|f(t)|$ for all $t\in I_{t_0}\cup J_{t_0}$. Outside $I_{t_0}\cup J_{t_0}$ we have that $f$ is strictly positive, so $gf^{-1}$ is bounded there and we conclude that $|g|\le \lambda f$ for some $\lambda>0$, showing that $M_{t_0}^E$ is generated by $f$. If $t_0$ is either $a$ or $b$, a similar argument using only a left or right neighborhood of $t_0$ shows that $M_{t_0}^E$ is generated by $f$ as well.  

Let $P$ be a non-maximal and non-minimal prime ideal in $E$. Then by \Cref{P: prime ideals in Pol^n} and \Cref{P: prime ideals in Pol} there is a $t_0\in[a,b]$ such that $P=L_{t_0}^k$ or $P=R_{t_0}^k$ for some $k\in\mathbb{N}$. Suppose first that $t_0\in(a,b)$. The functions
\[
f_l(t):=\begin{cases}
t_0-t&\mbox{if $a\le t\le t_0$}\\
\ \ 0&\mbox{if $t_0<t\le b$}
\end{cases}\qquad\mbox{and}\qquad f_r(t):=\begin{cases}
\ \ 0&\mbox{if $a\le t<t_0$}\\
t-t_0&\mbox{if $t_0\le t\le b$}
\end{cases}
\]
are disjoint, so it follows that either $f_l\in P$ or $f_r\in P$. Note that $P$ cannot contain both $f_l$ and $f_r$ as that would force $P$ to be the maximal ideal $M_{t_0}^E$ as discussed in the paragraph above. Suppose $f_l\in P$. Then $P=R_{t_0}^k$ and since each $f\in E$ can be written as 
\[
f(t)=a_0+a_1(t-t_0)+\ldots +a_n(t-t_0)^n 
\]
locally on a right neighborhood of $t_0$ for some $n\in\mathbb{N}$, it follows that $f\in P$ if and only if $a_0=\ldots=a_k=0$. Define $g_{k+1}(t):=|t-t_0|^{k+1}$ and note that for $f\in P$ there is a $\delta>0$ such that $|t-t_0|\le 1$ for all $t\in [t_0,t_0+\delta)$, and then 
\begin{align}\label{E:principal prime in Pol}
|f(t)|=|t-t_0|^{k+1}\left|a_{k+1}+a_{k+2}(t-t_0)+\ldots+a_n(t-t_0)^{n-k-1}\right|\le g_{k+1}(t)\sum_{i=k+1}^n|a_i|.
\end{align}
Since $g_{k+1}$ is strictly positive on $[t_0+\delta,b]$, it follows that $fg_{k+1}^{-1}$ is bounded there and so there is a $\lambda>0$ such that $|f|\le \lambda g_{k+1}$ on $[t_0,b]$. Since $P\subsetneq M_{t_0}^E$, there also is a $\mu>0$ such that $|f|\le \mu f_l$ on $[a,t_0]$, which implies that the function
\[
g(t):=\begin{cases}
f_l(t)&\mbox{if $a\le t\le t_0$}\\
g_{k+1}(t)&\mbox{if $t_0<t\le b$}
\end{cases}
\]
generates the ideal $P$. Similarly, in the case where $f_r\in P$ instead of $f_l$, it follows that  $P=L_{t_0}^k$ and the function
\[
h(t):=\begin{cases}
g_{k+1}(t)&\mbox{if $a\le t<t_0$}\\
f_r(t)&\mbox{if $t_0\le t\le b$}
\end{cases}
\]
generates the ideal $P$. Finally, if $t_0=a$ or $t_0=b$, then it follows from \eqref{E:principal prime in Pol} that $P$ is generated by either $(t-a)^{k+1}$ or $(b-t)^{k+1}$, respectively.
\end{proof}

Note that by Theorem~\ref{T: Cohen} the vector lattices of piecewise polynomials must contain non-principal prime ideals, which are precisely the minimal prime ideals as shown below.

\begin{corollary}\label{L:minimal not principal Ppol}
Let $E$ be the vector lattice $\mathrm{PPol}^n([a,b])$ or $\mathrm{PPol}([a,b])$. Then none of the minimal prime ideals in $E$ are principal.
\end{corollary}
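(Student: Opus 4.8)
The plan is to verify directly that none of the ideals $L_{t_0}$ (for $t_0\in(a,b]$) and $R_{t_0}$ (for $t_0\in[a,b)$), which by \Cref{L:minimal prime in poly} are exactly the minimal prime ideals of $E$, can be principal. Reflecting $[a,b]$ in its midpoint gives a lattice automorphism of $E$ that interchanges the ideals of type $L_{t_0}$ with those of type $R_{t_0}$, so it suffices to treat the $L_{t_0}$. Recall also that a vector lattice ideal is principal exactly when it is finitely generated, so it is enough to show $L_{t_0}$ is not generated by a single positive vector.

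Suppose towards a contradiction that $L_{t_0}=I_g$ for some $0\le g\in E$. By the definition of $L_{t_0}$ there is a $\delta>0$, which we may shrink so that $t_0-\delta>a$, with $g(t)=0$ for all $t\in(t_0-\delta,t_0]$. Consider the function
\[
h(t):=(t-t_0+\delta)_+\wedge(t_0-\tfrac{\delta}{2}-t)_+ ,
\]
which is a lattice combination of affine functions and hence lies in $E$, whether $E=\mathrm{PPol}^n([a,b])$ (with $n\ge 1$) or $E=\mathrm{PPol}([a,b])$. This $h$ is supported on $[t_0-\delta,t_0-\tfrac{\delta}{2}]$; in particular it vanishes on the left neighbourhood $(t_0-\tfrac{\delta}{2},t_0]$ of $t_0$ and $h(t_0)=0$, so $h\in L_{t_0}$. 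On the other hand, at the point $s:=t_0-\tfrac{3\delta}{4}\in(t_0-\delta,t_0)$ we have $h(s)=\tfrac{\delta}{4}>0$ while $g(s)=0$, so no pointwise inequality $h\le\lambda g$ can hold for any $\lambda\ge 0$; since $h\ge 0$ this means $h\notin I_g$. This contradicts $L_{t_0}=I_g$.

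Hence $L_{t_0}$ is not principal, and by the reflection argument neither is any $R_{t_0}$, so no minimal prime ideal of $E$ is principal. There is no genuine obstacle here; the only point to keep in mind is the crux of the computation, namely that any generator of $L_{t_0}$ is forced to vanish on a left neighbourhood of $t_0$ of some \emph{fixed} positive radius $\delta$, whereas $L_{t_0}$ still contains bump functions vanishing only on the strictly smaller left neighbourhood $(t_0-\tfrac{\delta}{2},t_0]$ and strictly positive just to the left of it, and such a bump cannot be dominated by any scalar multiple of the generator. For a finite generating set $\{g_1,\dots,g_m\}$ the same reasoning applies after replacing it with $g_1+\cdots+g_m$, which vanishes on a common left neighbourhood of $t_0$.
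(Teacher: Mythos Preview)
Your proof is correct and follows essentially the same approach as the paper: any candidate generator $g$ of $L_{t_0}$ must vanish on some fixed left neighbourhood $(t_0-\delta,t_0]$, and one then exhibits an element of $L_{t_0}$ that is strictly positive somewhere in that interval and hence cannot lie in $I_g$. The paper states this more tersely (it just says one ``can construct two disjoint non-zero functions supported in $(t_0-\delta,t_0]$'' without writing them down), whereas you give an explicit tent function and use the reflection automorphism to reduce $R_{t_0}$ to $L_{t_0}$; both are minor presentational differences rather than a different argument.
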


\begin{proof}
For any minimal prime ideal $P$ in $E$ there is a $t_0\in[a,b]$ such that $P=L_{t_0}$ or $P=R_{t_0}$ by \Cref{L:minimal prime in poly}. For every positive function $g$ in $P$ there is a $\delta>0$ such that $g$ is zero on either $(t_0-\delta,t_0]$ or $[t_0,t_0+\delta)$. Hence, we can construct two disjoint non-zero functions supported in $(t_0-\delta,t_0]$ or we can construct two disjoint functions supported in $[t_0,t_0+\delta)$. Hence, neither $L_{t_0}$ nor $R_{t_0}$ can be principal. 
\end{proof}

{\it Acknowledgments.}
The first author acknowledges financial support from the Slovenian Research Agency, Grants No. P1-0222, J1-2453 and J1-2454. 

\bibliographystyle{alpha}
\bibliography{prime}

\end{document}